\newcommand{\sect}[1]{\section{#1}\setcounter{equation}{0}}
\font\mbn=msbm10 scaled \magstep1
\font\mbs=msbm7 scaled \magstep1
\font\mbss=msbm5 scaled \magstep1
\newcommand{\N}       { \mathbb{N}}
\newcommand{\Z}        {\mathbb{Z}  }
\newtheorem{Th}{Theorem}[section]
\newtheorem{C}[Th]{Corollary}
\newtheorem{Proposition}[Th]{Proposition}
\newtheorem{R}[Th]{Remark}
\newtheorem{E}[Th]{Example}
\newtheorem*{P3'}{Problem 3$'$}
\begin{document}

\title[Shuffle and Fa\`{a} di Bruno Hopf Algebras in the Center Problem for ODEs]{Shuffle and Fa\`{a} di Bruno Hopf Algebras in the Center Problem for Ordinary Differential Equations}

\author{Alexander Brudnyi} 
\address{Department of Mathematics and Statistics\newline
\hspace*{1em} University of Calgary\newline
\hspace*{1em} Calgary, Alberta\newline
\hspace*{1em} T2N 1N4}
\email{albru@math.ucalgary.ca}
\keywords{Shuffle Hopf algebra, Fa\`{a} di Bruno Hopf algebra, center problem, first return map, Bell polynomial}
\subjclass[2010]{Primary 34C07; Secondary 05A10}

\thanks{Research supported in part by NSERC}

\begin{abstract}
In this paper we describe the Hopf algebra approach to the center problem for the differential equation $\frac{dv}{dx}=\sum_{i=1}^{\infty}a_{i}(x)v^{i+1}$, $x\in [0,T]$, and study some combinatorial properties of the first return map of this equation. The paper summarizes and extends previously developed approaches to the center problem due to Devlin and the author.
\end{abstract}

\date{} 

\maketitle

\tableofcontents

\sect{Introduction}

Given the ordinary differential equation
\begin{equation}\label{e1}
\frac{dv}{dx}=\sum_{i=1}^{\infty}a_{i}(x)v^{i+1},\ \ \ x\in I_{T}:=[0,T],
\end{equation}
with coefficients $a_{i}\in L^{\infty}(I_{T})$ (- the Banach space of bounded measurable complex-valued functions on $I_{T}$ equipped with supremum norm) satisfying
\begin{equation}\label{e2}
\sup_{x\in I_{T},\ i\in\N}\sqrt[i]{|a_{i}(x)|}<\infty
\end{equation}
the {\em center problem} asks whether \eqref{e1} determines a {\em center}, i.e. whether every solution $v$ of \eqref{e1} with a sufficiently small initial value (existing and Lipschitz due to \eqref{e2}) satisfies $v(T)=v(0)$.
The center problem arises naturally in the framework of the geometric theory of ordinary differential equations created by Poincar\'{e}. In particular, it is related to the classical Poincar\'{e} Center-Focus problem for planar polynomial vector fields 
\begin{equation}\label{e3}
\frac{dx}{dt}=-y+F(x,y),\ \ \ \frac{dy}{dt}=x+G(x,y),
\end{equation}
where $F$ and $G$ are real polynomials of a given degree without constant and linear terms, asking about conditions on $F$ and $G$ under which all trajectories of (\ref{e3}) situated in a small neighbourhood of $0\in\mathbb R^{2}$ are closed. Passing to polar coordinates $(x,y)=(r\cos\varphi, r\sin\varphi)$ in (\ref{e3}) and expanding the right-hand side of the resulting equation as a series in $r$ (for $F$, $G$ with sufficiently small coefficients) one obtains an equation (\ref{e1}) with coefficients being trigonometric polynomials depending polynomially on the coefficients of (\ref{e3}). This transforms the Center-Focus problem to the center problem for equations (\ref{e1}) with coefficients depending polynomially on a parameter. (For recent advances in the area of the center problem for equation \eqref{e1} see  \cite{AL, A1, A2, A3, A4, Br6, Br2, Br3, Br4, Br5, BFY, BRY, Cr, CGM1, CGM2, CGM3, D, GGL, GGS, P1, P2}
and references therein.)\smallskip

By $\mathscr X$ we denote the vector space of sequences $a=(a_{1},a_{2},\dots)$ of coefficients of equation \eqref{e1} satisfying (\ref{e2}) and by
$\mathscr C\subset \mathscr X$ the set of centers of (\ref{e1}). Let  $v(x;r;a)$, $x\in I_{T}$, be the Lipschitz solution with initial value $v(0;r;a)=r$ of equation (\ref{e1}) with the sequence of coefficients $a\in \mathscr X$. Clearly, for every $x\in I_{T}$,  $v(x;r;a)\in G_{c}[[r]]$, the set of locally convergent near zero power series of the form $r+\sum_{i=1}^\infty c_i r^{i+1}$ with all $c_i\in\mathbb C$.
By definition, $\mathcal P(a):=v(T;\cdot;a)$ is the {\em first return map} of \eqref{e1}. The explicit expression for $\mathcal P(a)$ was obtained by Devlin \cite{D} (for equations with finitely many nonzero coefficients $a_i$) and  independently and by a different method by the author \cite{Br1} (the general case):
\begin{equation}\label{e4}
\mathcal P(a):=r+\sum_{i=1}^{\infty}\left(\sum_{i_{1}+\cdots +i_{k}=i}p_{i_{1},\dots, i_{k}}(i)\cdot I_{i_{1},\dots, i_{k}}(a)\right)r^{i+1}
\end{equation}
(in the inner sum $k$ runs over the set of natural numbers $1,\dots, i$),
where for $t\in\mathbb C$,
\begin{equation}\label{e5}
\begin{array}{l}
\displaystyle
p_{i_{1},\dots, i_{k}}(t)=
(t-i_{1}+1)(t-i_{1}-i_{2}+1)(t-i_{1}-i_{2}-i_{3}+1)\cdots (t-i+1)\\
\\
\displaystyle
{\rm and}\quad I_{i_{1},\dots, i_{k}}(a):=\int\cdots\int_{0\leq s_{1}\leq\cdots\leq s_{k}\leq T}a_{i_{k}}(s_{k})\cdots a_{i_{1}}(s_{1})\ \!ds_{k}\cdots ds_{1}.
\end{array}
\end{equation}
In particular, one obtains (see \cite[Th.3.1]{Br2})
\begin{equation}\label{e6}
\begin{array}{l}
\displaystyle
a\in {\mathscr C}\ \Longleftrightarrow\ \sum_{i_{1}+\cdots +i_{k}=i}p_{i_{1},\dots, i_{k}}\cdot I_{i_{1},\dots, i_{k}}(a)\equiv 0\quad {\rm for\ all}\quad i\in\N\medskip\\
\displaystyle \qquad\quad \Longleftrightarrow\ \sum_{i_{1}+\cdots +i_{k}=i}p_{i_{1},\dots, i_{k}}(i)\cdot I_{i_{1},\dots, i_{k}}(a)= 0
\quad {\rm for\ all}\quad  i\in\N.
\end{array}
\end{equation}

In \cite{D} the center problem for equation \eqref{e1} with finitely many nonzero coefficients $a_i$ was reformulated using the language of word-problems. In the same vein, in \cite{Br2} the algebraic model  for the center problem for equation \eqref{e1} was constructed. In the present paper we continue this line of research and describe the Hopf algebra approach to the center problem. The key point of this approach is that the first return map of equation \eqref{e1} determines the natural monomorphism of  the co-opposite of the {\em Fa\`{a} di Bruno Hopf algebra} into the {\em shuffle Hopf algebra} (see Sections 2--5 below for the corresponding definitions and results) given in terms of polynomials
\begin{equation}\label{e7}
\mathcal P_i(X_1,\dots, X_i)=\sum_{i_{1}+\cdots +i_{k}=i}p_{i_{1},\dots, i_{k}}(i)\, X_{i_{k}}\cdots X_{i_{1}},\quad i\in\N,
\end{equation}
in free noncommutative variables $X_1,X_2,\dots$.

Some recurrence relations for such polynomials (in the sequel called the {\em displacement polynomials} as they originated from the expression for the displacement map of equation \eqref{e1}) were established by Devlin, see \cite[Th.6.5]{D}. In the present paper we study
more general polynomials 
\begin{equation}\label{e8}
\widetilde{\mathcal P}_i(X_1,\dots, X_i; t)=\sum_{i_{1}+\cdots +i_{k}=i}p_{i_{1},\dots, i_{k}}(t)\, X_{i_{k}}\cdots X_{i_{1}},\quad t\in\mathbb C,\ i\in\N,
\end{equation}
which naturally appear in our Hopf algebra approach to the center problem (see Section~6) and are closely related to the classical Bell polynomials \cite{B}. We establish some recurrence relations for such polynomials (referred to as the {\em generalized displacement polynomials}) extending those of \cite[Th.6.5]{D} and describe certain important combinatorial properties of their coefficients.\smallskip

The paper is organized as follows. \smallskip

Section 2 contains the necessary background material from the theory of Hopf algebras.

Sections 3 and 4 are intended as an introduction to the areas of the shuffle and Fa\`{a} di Bruno Hopf algebras. 

Sections 5 and 6 comprise our main results and their proofs. Specifically, in Section 5.1.1 we reveal the algebraic nature of the displacement polynomials showing that they appear in the expression for the natural Hopf algebra monomorphism of the 
co-opposite of the Fa\`{a} di Bruno Hopf algebra into the shuffle Hopf algebra. As a result, we obtain some important combinatorial relations between the displacement and the Bell polynomials (see \eqref{eq5.2a}, \eqref{eq5.2b}). In Section 5.1.2, using the `generating function' for the displacement polynomials \eqref{gf}, we prove some recurrence relations for them, see Theorem \ref{recur}, partially established earlier by Devlin \cite{D} by a different method.  The first return map of equation \eqref{e1} can be factorized through the so-called Chen map \eqref{e2.16} of $\mathscr X$ into the group of characters of the shuffle Hopf algebra; this result is established in Section 5.2. In Section 5.3 we describe the group of formal centers of equation \eqref{e1} introduced earlier in \cite{Br2}. It turns out that it is a subgroup of the group of characters of the shuffle Hopf algebra isomorphic to the group of characters of the quotient of the algebra by the Hopf ideal generated by the displacement polynomials. In the same way, we describe the Lie algebra of the group of formal centers, see \eqref{eq5.7}. The Hopf algebra approach to the center problem for equations \eqref{e1} with finitely many terms is described in Section 5.4.

Finally, Section 6 is devoted to the study of the generalized displacement polynomials. We reveal their algebraic nature (showing that their values for $t\in\N$ appear as the `matrix entries' of the composition of the well-known infinite-dimensional faithful representation of  group $(G_c[[r]],\circ)$ and the first return map $\mathcal P:\mathscr X\rightarrow G_c[[r]]$, see Proposition \ref{prop6.4} and Remark \ref{rem6.6}), prove some important recurrence relations for them (Sections 6.1, 6.2.2),  establish their connection with the Bell polynomials (Section 6.2.1) and prove some combinatorial identities for their coefficients (Section 6.3).

\sect{Hopf Algebras}
In this section we collect some basic definitions and results in the area of Hopf algebras, cf., e.g., \cite{C}, \cite{CK}, \cite{GVF}, \cite{M}, \cite{S}. All objects are considered over a ground field $\mathbb K$.\medskip

{\bf (A)} An {\em associative unital algebra} is a $\mathbb K$-vector space $A$ together with a multiplication $m: A\otimes A\rightarrow A$, $m (a_1\otimes a_2)=:a_1\cdot a_2$, $a_1,a_2\in A$, such that 
\[
m\circ (m\otimes {\rm id})=m\circ ({\rm id}\otimes m)
\]
(i.e. $(a_1\cdot a_2)\cdot a_3= a_1\cdot (a_2\cdot a_3)$ for all $a_1,a_2,a_3\in A$), and a unit $\eta: \mathbb K\hookrightarrow A$ such that 
\[
\eta(k_1\cdot k_2)=\eta(k_1)\cdot \eta(k_2)\  {\rm for\ all}\  k_1,k_2\in \mathbb K\ {\rm and}\ a\cdot 1_A=a=1_A\cdot a\ {\rm for\ all}\ a\in A;
\]
here $1_A:=\eta(1_{\mathbb K})$.\medskip

{\bf (C)} A {\em coassociative counital coalgebra} is a $\mathbb K$-vector space $C$ together with a comultiplication $\Delta: C\rightarrow C\otimes C$, $\Delta(c):=\sum c_{(1)}\otimes c_{(2)}$, such that
\[
(\Delta\otimes {\rm id})\circ\Delta=({\rm id}\otimes\Delta)\circ\Delta,
\]
and a counit $\varepsilon: C\rightarrow\mathbb K$ such that
\[
(\varepsilon\otimes {\rm id}_C)\circ\Delta=({\rm id}_C\otimes\varepsilon)\circ\Delta={\rm id}_C.\smallskip
\]

{\bf (B)} A {\em bialgebra}  is a $\mathbb K$-vector space $B$ which is both an associative algebra and a coassociative coalgebra such that comultiplication $\Delta$ and counit $\varepsilon$ are algebra morphisms, i.e. for all $b_1, b_2\in B$,
\[
\Delta(b_1\cdot b_2)=\Delta(b_1)\cdot\Delta(b_2),\quad \Delta(1_H)=1_H\otimes 1_H,\quad
\varepsilon(b_1\cdot b_2)=\varepsilon(b_1)\cdot\varepsilon(b_2),\quad \varepsilon(1)=1;
\]
here the product on $B\otimes B$ is given by $(b_1\otimes b_1')\cdot (b_2\otimes b_2')=(b_1\cdot b_2)\otimes (b_1'\cdot b_2')$ for all $b_1, b_2,b_1',b_2'\in B$.\medskip

{\bf (H)} A {\em Hopf algebra}  is a bialgebra $H$ together with
a $\mathbb K$-linear map $S: H\rightarrow H$, called the {\em antipode}, such that 
\begin{equation}\label{e2.6}
m\circ (S\otimes {\rm id})\circ\Delta=\eta\circ\varepsilon=m\circ({\rm id}\otimes S)\circ \Delta
\end{equation}
and $S$ is  both an antimorphism of algebras and an antimorphism of coalgebras, i.e. for all $h_1,h_2\in H$,
\[
S(h_1\cdot h_2)=S(h_2)\cdot S(h_1),\  S(1_H)=1_H,\
\Delta(S(h_1))=(S\otimes S)(\Delta^{op}(h_1)),\ \varepsilon(S(h_1))=\varepsilon(h_1),
\]
where $\Delta^{op}=\tau\circ\Delta$, $\tau(u\otimes v)=v\otimes u$.
\begin{E}
{\rm Recall that the {\em universal enveloping algebra} $U(\mathfrak g)$ of a Lie algebra $(\mathfrak g, [\cdot,\cdot])$ is the quotient of the free associative unital algebra on the vector space $\mathfrak g$ by the two-sided ideal generated by elements of the form $x\cdot y-y\cdot x-[x,y]\cdot 1$, $x,y\in\mathfrak g$. It is a Hopf algebra with comultiplication, counit and antipode given on the generators $x\in\mathfrak g$ by
\[
\Delta(x)=x\otimes 1+1\otimes x,\qquad \varepsilon(x)=0,\qquad S(x)=-x.
\]
Note that the Hopf algebra $U(\mathfrak g)$ is {\em cocommutative} (i.e. $\Delta(u)=\Delta^{op}(u)$ for all $u\in U(\mathfrak g)$).
}
\end{E}

A bialgebra $B$ is called {\em graded} if there are $\mathbb K$-vector spaces $B_n$, $n\in\mathbb Z_+$, such that 
\begin{equation}\label{e2.7}
B=\bigoplus_{n\ge 0}B_n,\quad m(B_n\otimes B_m)\subset B_{n+m}, \quad \Delta(B_n)\subset\bigoplus_{r+s=n}B_r\otimes B_s.
\end{equation}

Elements $b\in B_n$ have degree $n$ (written, ${\rm deg}(b)=n$). 
A graded bialgebra $B$ is called {\em connected} if $B_0$ is one-dimensional, i.e. $B_0=\mathbb K\cdot 1_H$. 

Every connected graded bialgebra $B$ is a Hopf algebra with the antipode
$S$ defined by certain recursive relations and satisfying $S(B_n)\subset B_n$ for all $n\in\mathbb Z_+$.\medskip

Let $(B,m,\eta,\Delta,\varepsilon)$ be a bialgebra and $A$ be an associative unital algebra with multiplication $m_A$ and unit $\eta_A:\mathbb K\hookrightarrow A$. The vector space $L(B;A)$ of $\mathbb K$-linear maps from $B$ to $A$ inherits a canonical associative unital algebra structure with multiplication given by convolution:
\begin{equation}\label{e2.8}
\alpha *\beta:=m_A\circ (\alpha\otimes\beta)\circ\Delta,\qquad \alpha,\beta\in L(B;A),
\end{equation}
and unit $\iota:=\eta_A\circ\varepsilon$.

If $(H,m,\eta,\Delta,\varepsilon,S)$ is a commutative Hopf algebra and  $(A,m_A,\eta_A)$ is a commutative unital algebra, then the subset $G_H(A)\subset L(H,A)$ of $A$-valued characters of $H$ (i.e. unital algebra morphisms from $H$ to $A$) forms a group with respect to the convolution product $*$ with unit $\iota=\eta_A\circ\varepsilon$ and the inverse given by  $\alpha^{*-1}:=\alpha\circ S$, $\alpha\in G_H(A)$.
Thus $H$ can be regarded as the algebra of $A$-valued functions on group $G_H(A)$ equipped with pointwise multiplication, i.e. each $h\in H$ can be seen as a function on $G_H(A)$ given by $h(\alpha):=\alpha(h)$, $\alpha\in G_H(A)$. Moreover, comultiplication $\Delta$ then coincides with comultiplication on functions determined by the group law in $G_H(A)$, i.e. $\Delta(h)(\alpha,\beta)=h(\alpha *\beta)$, $h\in H$, $\alpha,\beta\in G_H(A)$.

Further, an $A$-valued infinitesimal character of $H$ is a map $\alpha\in L(H,A)$ such that
\[
\alpha(h_1\cdot h_2)=\alpha(h_1)\cdot\iota(h_2)+\iota(h_1)\cdot\alpha(h_2),\qquad h_1,h_2\in H.
\]
Since, $\iota(1_H)=1_A$, $\alpha(1_H)=0$. It follows that the set $\mathfrak g_H(A)$ of $A$-valued infinitesimal 
characters of $H$ is a Lie algebra with bracket given by
\[
[\alpha,\beta]:=\alpha*\beta-\beta*\alpha,\qquad \alpha,\beta\in \mathfrak g_H(A).\smallskip
\]

Now, assume that $H=\oplus_{n\ge 0}H_n$ is a connected graded commutative Hopf algebra and $\mathbb K$ is of characteristic zero. Then by the Milnor-Moore theorem, see, e.g., \cite[Th.\,3.8.3]{C}, $H$ is a {\em free commutative algebra} generated by homogeneous elements. Also, $G_H(A)$ is a pro-unipotent Lie group
with the Lie algebra $\mathfrak g_H(A)$ and 
the exponential map 
\[
\exp(\alpha):=\iota+\sum_{n=1}^\infty\frac{\alpha^{*n}}{n!},\qquad \alpha\in \mathfrak g_H(A),
\]
maps $\mathfrak g_H(A)$ bijectively onto $G_H(A)$.

In addition, assume that ${\rm dim}_{\mathbb K}H_n<\infty$ for all $n$.  Let $H_n^*$ be the dual space of $H_n$ over $\mathbb K$. Then $H^*=\oplus_{n\ge 0}\,H_n^*$ is the graded dual Hopf algebra with multiplication $\Delta^*$, comultiplication $m^*$, unit $\varepsilon^*$, counit $\eta^*$ and antipode $S^*$. Since $H$ is commutative, its dual $H^*$ is a cocommutative Hopf algebra. Hence, by the Milnor-Moore theorem, see, e.g., \cite[Th.\,3.8.1]{C}, $H^*$ is the universal enveloping algebra of the Lie algebra of its primitive elements
\[
{\rm Prim}\, H^*:=\{h^*\in H^* : m^*(h^*)=h^*\otimes 1_{H^*}+1_{H^*}\otimes h^*\}.
\]
Since $H^*\subset L(H;\mathbb K)$ and  $1_{H^*}=\varepsilon$,
${\rm Prim}\, H^*$ is a Lie subalgebra of the Lie algebra $\mathfrak g_H(\mathbb K)$. In fact, $\mathfrak g_H(\mathbb K)$ is the completion of ${\rm Prim}\, H^*$ in $L(H;\mathbb K)$ equipped with the adic topology induced by the grading of $H$.  

Next, the Hopf algebra operations on $H^*$ extend by continuity to similar operations on $L(H;\mathbb K)$ turning the latter into a topological Hopf algebra. (Here the extension of $\Delta^*$ coincides with the convolution product, cf. \eqref{e2.8}, and of $\varepsilon^*$ with $\iota$.) Then the group of characters $G_H({\mathbb K})\subset L(H;\mathbb K)$ coincides with the set of {\em group-like elements}, that is elements $\alpha\in L(H;\mathbb K)$ such that
\[
m^*(\alpha)=\alpha\otimes\alpha \quad {\rm and}\quad \eta^*(\alpha)=1.
\]
\sect{Shuffle Hopff Algebra} 
We refer to \cite{C}, \cite{L}, \cite{Ra}, \cite{Re}, \cite{R} and references therein for basic results in the area of shuffle Hopff algebras.
\subsection{Definition} 
Let $\mathscr A=\bigl\{\alpha_i :i\in\N\}$ be a countable alphabet. By definition, a {\em word} is an ordered sequence $\alpha_{i_1}\dots \alpha_{i_{k}}$ of (not necessarily distinct) elements from $\mathscr A$. The set of all words together with the empty word $\emptyset\, (=:1)$ is denoted by $\mathscr A^*$. Let $\mathbb K\langle \mathscr A\rangle$ be the vector space over a field $\mathbb K$ of characteristic zero freely generated by elements of $\mathscr A^*$.
To introduce multiplication on $\mathbb K\langle \mathscr A\rangle$ we invoke the following definition.\smallskip

A permutation $\sigma$ of $\{1,2,...,r + s\}$ is called a {\em shuffle} of type $(r,s)$ (denoted $\sigma\in {\rm Sh}_{r,s}$) if $\sigma^{-1}(1) < \sigma^{-1}(2) < \cdots < \sigma^{-1}(r)$ and $\sigma^{-1}(r +1)<\sigma^{-1}(r +2)<\cdots <\sigma^{-1}(r +s)$.\smallskip\footnote{The term ``shuffle'' is used because such permutations arise in riffle shuffling a deck of $r + s$ cards cut into one pile of $r$ cards and a second pile of $s$ cards.}

Now, $\mathbb K\langle \mathscr A\rangle$ is equipped with the structure of a commutative algebra by defining the {\em shuffle product} of words
\begin{equation}\label{e2.9}
\alpha_{i_1}\dots \alpha_{i_r}\shuffle \alpha_{i_{r+1}}\dots \alpha_{i_{r+s}}:=\sum_{\sigma\in {\rm Sh}_{r,s}}\alpha_{i_{\sigma(1)}}\dots \alpha_{i_{\sigma(r+s)}}.
\end{equation}
Here the empty word $1\in \mathscr A^*$ is the unit of $ \mathbb K\langle \mathscr A\rangle$, i.e., $1\shuffle w=w=w\shuffle 1$ for all $w\in \mathscr A^*$.

Next, the grading on $ \mathbb K\langle \mathscr A\rangle$ is given by ${\rm deg}(\alpha_i):=i$ ($i\in\N$), ${\rm deg}(1):=0$ so that $ \mathbb K_i\langle \mathscr A\rangle$, $i\ge 1$,
is the subspace of $ \mathbb K\langle \mathscr A\rangle$  generating by words $w=\alpha_{i_1} \dots \alpha_{i_p}\in \mathscr A^*$ such that ${\rm deg}(w):=i_1+\cdots +i_p=i$.
Then the shuffle algebra $(\mathbb K\langle \mathscr A\rangle,\shuffle)=\oplus_{i\ge 0}\,\mathbb K_i\langle \mathscr A\rangle$ is a connected graded Hopf algebra with comultiplication defined by {\em decatenation},
\begin{equation}\label{e2.10}
\Delta (\alpha_{i_1} \dots \alpha_{i_p}):=\alpha_{i_1} \dots \alpha_{i_p}\otimes 1+1\otimes \alpha_{i_1} \dots \alpha_{i_p}+\sum_{j=1}^{p-1}\alpha_{i_1} \dots \alpha_{i_j}\otimes \alpha_{i_{j+1}}\dots \alpha_{i_p},
\end{equation}
and with counit $\varepsilon : \mathbb K\langle \mathscr A\rangle\rightarrow\mathbb K$ equal to zero at each nonempty word in $\mathscr A^*$ and one at $1$.
Also, the antipode is given by the formulas
\begin{equation}\label{eq3.3}
S(\alpha_{i_1} \dots \alpha_{i_p})=(-1)^p\, \alpha_{i_p}\dots \alpha_{i_1}.
\end{equation}

Next, let us introduce the lexicographic order $\prec$ on $\mathscr A^*$ assuming initially that $\alpha_i\prec \alpha_j$ if and only if $i<j$. By cyclic permutations a word $w=\alpha_{i_1}\dots \alpha_{i_k}$ generates $k$ words $w(1),\dots ,w(k)$ with $w(1) = w$. A {\em Lyndon word} is a word $w$ such that $w(1),\dots,w(k)$ are all distinct and $w\prec w(j)$ for $j = 2,\dots ,k$. The classical result by Radford \cite{Ra} asserts that $\mathbb K\langle \mathscr A\rangle$ is a graded polynomial algebra over $\mathbb K$ in the Lyndon words as generators.
\subsection{Set of Characters}
The graded dual Hopf algebra of $\mathbb K\langle \mathscr A\rangle$ is isomorphic to the associative algebra 
$\mathbb K\langle \mathbf X\rangle$, $\mathbf X=\{X_i : i\in\N\}$,  with unit $I$ of noncommutative polynomials in $I$ and free noncommutative variables $X_i$ with coefficients in $\mathbb K$. The duality between $\mathbb K\langle \mathscr A\rangle$ and $ \mathbb K\langle \mathbf X\rangle$ is defined by putting the 
monomial basis $\mathbf X^*$ of $ \mathbb K\langle \mathbf X\rangle$ in the natural duality with words in $\mathscr A^*$.
Also, the grading on $\mathbb K\langle \mathbf X\rangle$ is given by ${\rm deg}(X_i)=i$ $(i\in\mathbb N)$, ${\rm deg}(I)=0$ (by $\mathbb K_i\langle \mathbf X\rangle\subset\mathbb K\langle \mathbf X\rangle$ we denote the subspace of elements of degree $i$), 
and comultiplication $\Delta$ is defined on the generators by 
\[
\Delta(X_i)=I\otimes X_i+X_i\otimes I,\qquad i\in\N.
\]
The set of primitive elements of $\mathbb K\langle \mathbf X\rangle$ is the free Lie algebra over $\mathbb K$ generated by $X_i$, $i\in\N$
(and $\mathbb K\langle \mathbf X\rangle$ is its universal enveloping algebra).

Further, the set $L(\mathbb K\langle \mathscr A\rangle; \mathbb K)$ of $\mathbb K$-linear functionals on $\mathbb K\langle \mathscr A\rangle$ is the completion of $\mathbb K\langle\mathbf X\rangle$ with respect to its grading.
It is naturally identified with the subalgebra of the associative algebra $\mathbb K\langle \mathbf X\rangle [[t]]$ of formal power series in $t$ with coefficients in $\mathbb K\langle \mathbf X\rangle$ consisting of series of the form
 \begin{equation}\label{e2.11}
f=f_{0}I+\sum_{i=1}^{\infty}f_i\, t^{i},\qquad f_i\in \mathbb K_i\langle \mathbf X\rangle .
\end{equation}
Thus  $L(\mathbb K\langle \mathscr A\rangle; \mathbb K)$ has the structure of a topological Hopf algebra
with comultiplication extending by linearity and continuity  comultiplication $\Delta$ on $\mathbb K\langle \mathbf X\rangle$. In turn, the group
$G_{\mathbb K\langle \mathscr A\rangle}(\mathbb K)$ of $\mathbb K$-valued characters of $\mathbb K\langle \mathscr A\rangle$ is the set of group-like elements of $L(\mathbb K\langle \mathscr A\rangle; \mathbb K)$, and the Lie algebra $\mathfrak g_{\mathbb K\langle \mathscr A\rangle}(\mathbb K)$ of infinitesimal characters of $\mathbb K\langle \mathscr A\rangle$ is the set of primitive elements of $L(\mathbb K\langle \mathscr A\rangle; \mathbb K)$. It
consists of series $g=\sum_{i=1}^\infty g_i\, t^i$ with $g_i\in \mathbb K_i\langle \mathbf X\rangle\cap \mathfrak g_{\mathbb K\langle \mathscr A\rangle}(\mathbb K)$  {\em Lie elements} of degree $i$ (i.e. having the  
form
\begin{equation}\label{e2.14}
g_i=\sum_{i_{1}+\dots +i_{k}=i}d_{i_{1},\dots, i_{k}}[X_{i_{1}},[X_{i_{2}},[\ \cdots , [X_{i_{k-1}},X_{i_{k}}]\cdots \ ]]] ;
\end{equation}
here $[X,Y]:=XY-YX$ and the term with $i_{k}=i$ is $d_{i}X_{i}$).

Hence, the exponential map $\exp(g):=I+\sum_{n=1}^\infty\frac{g^n}{n!}$, $g\in L(\mathbb K\langle \mathscr A\rangle; \mathbb K)$, maps $\mathfrak g_{\mathbb K\langle \mathscr A\rangle}(\mathbb K)$ bijectively onto $G_{\mathbb K\langle \mathscr A\rangle}(\mathbb K)$.
\begin{R}
{\rm From \cite[Th.\,3.2]{M-KO} one obtains that
\begin{equation}\label{e3.5}
{\rm dim}\bigl(\mathbb K_i\langle \mathbf X\rangle\cap \mathfrak g_{\mathbb K\langle \mathscr A\rangle}(\mathbb K)\bigr)=\frac{1}{i}\sum_{d|i}(2^{\ \!i/d}-1)\cdot\mu(d),
\end{equation}
where the sum is taken over all numbers $d\in\N$ that divide $i$, and $\mu:\N\rightarrow\{-1,0,1\}$ is the M\"{o}bius function defined as follows.
If $d$ has a prime factorization
$$
d=p_{1}^{n_{1}}p_{2}^{n_{2}}\cdots p_{q}^{n_{q}},\ \ \ n_{i}>0,
$$
then
$$
\mu(d)=
\left\{
\begin{array}{cc}
1&{\rm for}\ d=1\\
(-1)^{q}&\quad {\rm if\ all}\ n_{i}=1\\
0&\ {\rm otherwise}.
\end{array}
\right.
$$

The basis of $\mathfrak g_{\mathbb K\langle \mathscr A\rangle}(\mathbb K)$ can be constructed by means of Lyndon words of $\mathbf X^*$, see \cite{Ly}.
}
\end{R}
\subsection{Semigroup of Paths}
Let us consider the set $\mathscr X$ of coefficients of equation \eqref{e1} as a nonassociative semigroup with the operations given for $a=(a_{1},a_{2},\dots)$ and $b=(b_{1},b_{2},\dots)$ in $\mathscr X$ by
$$
a*b=(a_{1}*b_{1},a_{2}*b_{2},\dots)\in \mathscr X\ \ \ {\rm and}\ \ \
a^{-1}=(a_{1}^{-1},a_{2}^{-1},\dots)\in \mathscr X,
$$
where for $i\in\N$
$$
(a_{i}*b_{i})(x)=\left\{
\begin{array}{ccc}
2b_{i}(2x)&{\rm if}&0\leq x\leq T/2,\\
2a_{i}(2x-T)&{\rm if}&T/2<x\leq T
\end{array}
\right.
$$
and
$$
a_{i}^{-1}(x)=-a_{i}(T-x),\ \ \ 0\leq x\leq T.
$$

Let $\mathbb C^{\infty}$ be the vector space of sequences of complex numbers $(c_{1},c_{2},\dots)$ equipped with the product topology.
For $a=(a_{1},a_{2},\dots)\in \mathscr X$ by $\widetilde a= (\widetilde a_{1},
\widetilde a_{2},\dots): I_{T}\rightarrow\mathbb C^{\infty}$, 
$\widetilde a_{k}(x):=\int_{0}^{x}a_{k}(t)\ \!dt$ for all $k\in\N$,
we denote a path in $\mathbb C^{\infty}$ starting at 0.
The one-to-one map $a\mapsto\widetilde a$ sends the product $a*b$ to the product of paths $\widetilde a\circ\widetilde b$, that is the path obtained by translating $\widetilde a$ so that its beginning meets the end of $\widetilde b$ and then forming the composite path. Similarly, $\widetilde {a^{-1}}$ is the path obtained by translating $\widetilde a$ so that its end meets 0 and then taking it with the opposite orientation. \smallskip

For $a=(a_1,a_2,\dots)\in \mathscr X$ let us consider the {\em basic iterated integrals}
\[
I_{i_{1},\dots, i_{k}}(a):=\int\cdots\int_{0\leq s_{1}\leq\cdots\leq s_{k}\leq T}a_{i_{k}}(s_{k})\cdots a_{i_{1}}(s_{1})\ \!ds_{k}\cdots ds_{1}
\]
(for $k=0$ we assume that this equals $1$). By the Ree shuffle formula \cite{R} the linear space over $\mathbb C$ generated by all such functions on $\mathscr X$ is an algebra (i.e. it is closed under the pointwise multiplication of functions on $\mathscr X$) denoted by $\mathcal I_{\mathbb C}(\mathscr X)$. Also, the correspondence $\mathscr A^*\ni \alpha_{i_k}\dots \alpha_{i_1}\mapsto I_{i_{1},\dots, i_{k}}\in \mathcal I_{\mathbb C}(\mathscr X)$ for all possible indices $i_1,\dots, i_k$ extends (by linearity) to an isomorphism of commutative algebras $\mathbb C\langle\mathscr A\rangle\rightarrow \mathcal I_{\mathbb C}(\mathscr X)$ whose transpose determines  the Chen map $E:\mathscr X\rightarrow L(\mathbb C\langle \mathscr A\rangle; \mathbb C)$ (see \eqref{e2.11}),
\begin{equation}\label{e2.16}
E(a):=E(a)=I+\sum_{i=1}^{\infty}\left(\ \!\sum_{i_{1}+\cdots +i_{k}=i}I_{i_{1},\dots, i_{k}}(a)X_{i_{k}}\cdots X_{i_{1}}\right)t^{i},
\end{equation}
that sends $\mathscr X$ into $G_{\mathbb C\langle\mathscr A\rangle}(\mathbb C)$ and satisfies (cf. \cite[Th.~6.1]{Ch})
\begin{equation}\label{e2.17}
E(a*b)=E(a)\cdot E(b),\qquad E(a^{-1})=E(a)^{-1}\quad {\rm for\ all}\quad a,b\in \mathscr X.
\end{equation}
Group $G_{\mathbb C\langle\mathscr A\rangle}(\mathbb C)$ has the natural structure of a complete metrizable separable topological group and $E(\mathscr X)$ is its dense subgroup, cf. \cite[Sect.\,2.3.2]{Br2}. 

Next, $\mathscr U:=E^{-1}(I)\subset\mathscr C$ and is called the {\em set of universal centers} of equation \eqref{e1}. Its basic properties are described in \cite[Sect.\,2.2]{Br2}, \cite{BY}, \cite{Br4} (see also references therein). 
For instance, if all coordinates of $a$ are real-valued $C^1$ functions on $I_T$ and the image of the path $\widetilde a: I_T\rightarrow\mathbb C^\infty$ is a piecewise smooth curve, then $a\in\mathscr U$ if and only if $\widetilde a=\widetilde a_1\circ\widetilde a_2$, where $\widetilde a_2: I_T\rightarrow\mathcal T$ is a closed path into a compact metric tree $\mathcal T$ and $\widetilde a_1\in C(\mathcal T,\mathbb C^\infty)$.

The set of centers $\mathscr C$ is closed under the semigroup operations on $\mathscr X$. In particular, $\widehat{\mathscr C}:=E(\mathscr C)$  and its closure in $G_{\mathbb C\langle\mathscr A\rangle}(\mathbb C)$ denoted by $\widehat{\mathscr C}_f$ are subgroups of $G_{\mathbb C\langle\mathscr A\rangle}(\mathbb C)$ called the groups of centers and of formal centers of equation \eqref{e1}. 

\sect{Fa\`{a} di Bruno Hopf Algebra}
For the main results and references on the Fa\`{a} di Bruno Hopf algebra see, e.g.,  \cite{FM}, \cite{FG}, \cite{GVF}, \cite{JR}.

The algebra, denoted by $\mathcal H_{{\rm FdB}}(\mathbb K)$, where $\mathbb K$ is a field of characteristic zero, is named after the classical formula computing the $n$th derivative of the composition of two functions of one variable attributed to Fa\`{a} di Bruno \cite{FdB}.  In our approach to the center problem we work with the co-opposite Hopf algebra $\mathcal H_{{\rm FdB}}^{cop}(\mathbb K)$ with comultiplication differed from that of  $\mathcal H_{{\rm FdB}}(\mathbb K)$ by the flip, cf. Section 2\,(H).
By definition,
$\mathcal H_{{\rm FdB}}^{cop}(\mathbb K):=\mathbb K[t_1,t_2,\dots]$ is the algebra of polynomials with coefficients in $\mathbb K$ in commutative variables $t_i$, $i\in\N$, with coproduct and counit defined on the generators by the formulas
\begin{equation}\label{e2.18}
\begin{array}{l}
\displaystyle
\Delta_{{\rm FdB}}^{op}(t_i):=\sum_{j=0}^i t_j\otimes\left( \sum_{\stackrel{l_0+l_1+\cdots +l_i=j+1}{_{l_1+2l_2+\cdots +il_i=i-j}}}\frac{(j+1)!}{l_0!\, l_1!\cdots l_i!}t_1^{l_1}\cdots t_i^{l_i}\right),\\
\\
\displaystyle \varepsilon(t_i)=\delta_{i0} \quad i\in\mathbb Z_+;\quad {\rm here}\quad t_0:=1.
\end{array}
\end{equation}
The expression in brackets can be written as 
\[
\frac{(j+1)!}{(i+1)!}B_{i+1,j+1}\bigl(1,2!\,t_1, 3!\,t_2,\dots, (i-j+1)!\,t_{i-j}\bigr),
\] 
where
\begin{equation}\label{e2.19}
B_{r,s}(t_1,\dots, t_l):=\sum_{\stackrel{k_1+k_2+\cdots +k_l=s}{_{k_1+2k_2+\cdots +lk_l=r}}}\frac{r!}{k_1!\cdots k_l!}\left(\frac{t_1}{1!}\right)^{k_1}\cdots \left(\frac{t_l}{l!}\right)^{k_l},\quad l=r-s+1,
\end{equation}
are the {\em Bell polynomials} \cite{B}.

The antipode $S_{\rm FdB}$ of $\mathcal H_{{\rm FdB}}^{cop}(\mathbb K)$ (and of $\mathcal H_{{\rm FdB}}(\mathbb K)$ as well) is given by the formulas 
\begin{equation}\label{e2.20}
S_{\rm FdB}(t_i)=\frac{1}{(i+1)!}\sum_{j=1}^{i}(-1)^j B_{i+j,j}\bigl(0,2!\,t_1,3!\,t_2,\dots, (i+1)!\, t_{i}\bigr), \quad i\in\N.
\end{equation}
Hopf algebra $\mathcal H_{{\rm FdB}}^{cop}(\mathbb K)$ is connected and graded with generators $t_i$ having degree $i$, $i\in\N$.  Let $\mathcal H_{{\rm FdB}}^{i}(\mathbb K)\subset \mathcal H_{{\rm FdB}}^{cop}(\mathbb K)$ be the subspace of elements of degree $i$. By the Milnor-Moore theorem the dual Hopf algebra $\mathcal H_{{\rm FdB}}^{cop}(\mathbb K)^*=\oplus_{i\ge 0}\,\mathcal H_{{\rm FdB}}^{i}(\mathbb K)^*$ is the universal enveloping algebra (equipped with the convolution product) of the Lie algebra ${\rm Prim}\, \mathcal H_{{\rm FdB}}^{cop}(\mathbb K)^*$ of its primitive elements. The latter has a basis $t_i'$, $i\in\N$, with ${\rm deg}(t_i')=i$ such that
\begin{equation}\label{eq4.4}
t_i'(t_j)=\delta_{ij}\quad {\rm and}\quad t_i'(t_{i_1}\cdots t_{i_{k}})=0\quad {\rm for}\quad k\ge 2
\end{equation}
which satisfies the following relations
\begin{equation}\label{eq4.5}
[t_i',t_j']:=t_i'*t_j'-t_j'*t_i'=(i-j)\,t_{i+j}'\quad {\rm for\ all}\quad i,j\in\N.
\end{equation}
\begin{R}
{\rm
It is worth noting that ${\rm Prim}\, \mathcal H_{{\rm FdB}}^{cop}(\mathbb C)^*$ is isomorphic to the Lie algebra $W_1(1)={\rm span}_{\mathbb C}\{e_n:=t^{n+1}\frac{d}{dt},\, n\in\N \}$, the nilpotent part of the {\em Witt algebra} of complex formal vector fields on $\mathbb R$.}
\end{R}
Next, the Lie algebra $\mathfrak g_{\mathcal H_{{\rm FdB}}^{cop}(\mathbb K}(\mathbb K)$ of $\mathbb K$-valued infinitesimal characters of $\mathcal H_{{\rm FdB}}^{cop}(\mathbb K)$ is naturally identified with the set of formal power series of the form $g(t):=\sum_{i=1}^\infty (d_i\, t_i') t^i$, $d_i\in\mathbb K$,
with the bracket extending by linearity and the adic continuity the bracket on ${\rm Prim}\,\mathcal H_{{\rm FdB}}^{cop}(\mathbb K)^*$. In turn, the set
$L(\mathcal H_{{\rm FdB}}^{cop}(\mathbb K);\mathbb K)$ of $\mathbb K$-linear functionals on $\mathcal H_{{\rm FdB}}^{cop}(\mathbb K)$ is the completion of $\mathcal H_{{\rm FdB}}^{cop}(\mathbb K)^*$ with respect to its grading. It is identified with the set of formal power series in $t$ with coefficients in $\mathcal H_{{\rm FdB}}(\mathbb K)^*$ of the form
\begin{equation}\label{eq4.6}
f(t)=\sum_{i=0}^\infty f_i\, t^{i},\qquad f_i\in \mathcal H_{{\rm FdB}}^i(\mathbb K)^*,\quad i\in \mathbb Z_+,
\end{equation}
with multiplication $*$ extending by linearity and continuity the convolution product on $\mathcal H_{{\rm FdB}}^{cop}(\mathbb K)^*$.  By definition, $f(t)(t_i)=f_i(t_i)\,t^i$ for all $i\in\mathbb Z_+$ (here $t_0:=1$).

Further, the set of characters $G_{\mathcal H_{{\rm FdB}}^{cop}(\mathbb K)}(\mathbb K)\subset L(\mathcal H_{{\rm FdB}}^{cop}(\mathbb K);\mathbb K)$ of $\mathcal H_{{\rm FdB}}^{cop}(\mathbb K)$ is the image of ${\rm Prim}\, \mathcal H_{{\rm FdB}}^{cop}(\mathbb K)^*$ under the exponential map.
Each $f\in G_{\mathcal H_{{\rm FdB}}^{cop}(\mathbb K)}(\mathbb K)$ is uniquely determined by its values on generators. Moreover, map $\Theta: \bigl(G_{\mathcal H_{{\rm FdB}}^{cop}(\mathbb K)},*\bigr)\rightarrow \bigl(G_{\mathbb K}[[r]],\circ\bigr)$ (- the group with respect to the composition of series of formal power series of the form $r+\sum_{i=1}^\infty c_i\, r^{i+1}$ with all $c_i\in\mathbb K$) given by 
\begin{equation}\label{eq4.7}
\Theta(f)(r):=r+\sum_{i=1}^\infty f_i(t_i)\, r^{i+1},\qquad f\in G_{\mathcal H_{{\rm FdB}}^{cop}(\mathbb K)},
\end{equation}
is an isomorphism of topological groups.
 \sect{Center Problem}
 In this section we describe the main ingredients of the Hopf algebra approach to the center problem for equation \eqref{e1}. Some of our results are already proved in \cite{Br2}, so we just adapt them to the new setting.
 \subsection{Displacement Polynomials} 
 \subsubsection{{\bf Embedding of $\mathcal H_{{\rm FdB}}^{cop}(\mathbb K)$ into $\mathbb K\langle \mathscr A\rangle$}}
 Consider the canonical surjective morphism of Lie algebras $\rho_\mathbb K: {\rm Prim}\, \mathbb K\langle\mathbf X\rangle\rightarrow {\rm Prim}\, \mathcal H_{{\rm FdB}}^{cop}(\mathbb K)^*$
 defined on the generators by  
 \[
 \rho_\mathbb K(X_i)=t_i',\quad i\in\N.
 \]
 By the universal property of enveloping algebras it extends to a surjective morphism of graded Hopf algebras
 $\rho_\mathbb K : \mathbb K\langle\mathbf X\rangle\rightarrow \mathcal H_{{\rm FdB}}^{cop}(\mathbb K)^*$ whose transpose is a monomorphism of graded Hopf algebras $\rho_\mathbb K^*: \mathcal H_{{\rm FdB}}^{cop}(\mathbb K)\rightarrow \mathbb K\langle \mathscr A\rangle$.
 \begin{Th}\label{te5.1}
The values of $\rho_\mathbb K^*$ on the generators are the displacement polynomials, i.e.
 \begin{equation}\label{eq5.1}
\rho_\mathbb K^*(t_i)=\mathcal P_i(\alpha_1,\dots, \alpha_i),\quad i\in\N;
\end{equation}
here the product of words in $\mathscr A^*$ is defined by concatenation.
\end{Th}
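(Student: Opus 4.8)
The plan is to unwind the definition of the transpose $\rho_\mathbb K^*$ together with the duality pairing between $\mathbb K\langle\mathscr A\rangle$ and $\mathbb K\langle\mathbf X\rangle$. Since $\rho_\mathbb K^*$ is graded and $\deg(t_i)=i$, the element $\rho_\mathbb K^*(t_i)$ is supported on words $\alpha_{i_k}\cdots\alpha_{i_1}$ with $i_1+\cdots+i_k=i$, and its coefficient at such a word is, by definition of the transpose, $\langle t_i,\rho_\mathbb K(X_{i_k}\cdots X_{i_1})\rangle$. As $\rho_\mathbb K$ is an algebra morphism carrying the product of $\mathbb K\langle\mathbf X\rangle$ to the convolution product on $\mathcal H_{{\rm FdB}}^{cop}(\mathbb K)^*$ and sends $X_j$ to $t_j'$, this coefficient equals $(t_{i_k}'*\cdots*t_{i_1}')(t_i)$. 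Thus the theorem reduces to the single combinatorial identity
\[
(t_{i_k}'*\cdots*t_{i_1}')(t_i)=p_{i_1,\dots,i_k}(i)\qquad\text{whenever }i_1+\cdots+i_k=i.
\]

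To evaluate the left-hand side I would pass to the representation of the dual. By the Remark following \eqref{eq4.5}, ${\rm Prim}\,\mathcal H_{{\rm FdB}}^{cop}(\mathbb K)^*$ is the Witt-type algebra $W_1(1)$ under the identification $t_i'\leftrightarrow e_i=r^{i+1}\frac{d}{dr}$, so $\mathcal H_{{\rm FdB}}^{cop}(\mathbb K)^*=U(W_1(1))$ with $*$ as its product, acting faithfully on $\mathbb K[[r]]$ by differential operators. The key auxiliary fact is a concrete pairing formula: $\langle t_i,e_{i_k}*\cdots*e_{i_1}\rangle$ equals the coefficient of $r^{i+1}$ in the series obtained by applying the operators $e_{i_k},e_{i_{k-1}},\dots,e_{i_1}$ to the function $r$, with $e_{i_k}$ applied first. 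On a single generator this is immediate from the flow of $e_n$ (the time-$s$ map is $r(1-nsr^n)^{-1/n}$, whose coefficient of $r^{i+1}$ has $s$-derivative $\delta_{in}$ at $s=0$), while the correct ordering for products is pinned down by a direct comparison with \eqref{e2.18}: extracting the $t_a\otimes t_b$ component of $\Delta_{{\rm FdB}}^{op}(t_i)$ gives $(a+1)\delta_{a+b,i}$, matching the result of applying $e_a$ and then $e_b$ to $r$ rather than the opposite order.

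With this formula the computation is a short induction: applying $e_{i_k},\dots,e_{i_1}$ successively to $r$ multiplies the coefficient at each step by the current exponent, producing $\prod_{l=1}^{k-1}\bigl(1+i_{k-l+1}+\cdots+i_k\bigr)\,r^{i+1}$. On the other hand, setting $t=i$ in \eqref{e5} gives $p_{i_1,\dots,i_k}(i)=\prod_{m=1}^{k}\bigl(i-(i_1+\cdots+i_m)+1\bigr)=\prod_{m=1}^{k-1}\bigl(1+i_{m+1}+\cdots+i_k\bigr)$, the factor for $m=k$ being $1$; this is the same product read in reverse. Hence the reduced identity holds and the theorem follows.

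I expect the ordering and orientation bookkeeping to be the only genuine obstacle: the co-opposite coproduct $\Delta_{{\rm FdB}}^{op}$, the reversal of the factor order in passing from the convolution product to operator composition, and the word-order convention in the $\mathbb K\langle\mathscr A\rangle$--$\mathbb K\langle\mathbf X\rangle$ duality must all be tracked consistently, and a single comparison at $k=2$ with \eqref{e2.18} suffices to fix them. An alternative that avoids the representation altogether is to extract the multilinear component of the iterated coproduct $\Delta_{{\rm FdB}}^{op,(k-1)}(t_i)$ directly by induction from \eqref{e2.18}; this is more mechanical but reproduces the same telescoping product.
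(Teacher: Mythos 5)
Your computations are all correct, and your route is genuinely different from the paper's. Both arguments begin identically, reducing \eqref{eq5.1} to the identity $(t'_{i_k}*\cdots*t'_{i_1})(t_i)=p_{i_1,\dots,i_k}(i)$ for $i_1+\cdots+i_k=i$; but the paper then evaluates the left-hand side by a direct recursion on the coproduct formula \eqref{e2.18} --- pairing the rightmost factor $t'_{i_1}$ against the Bell-polynomial leg gives $(t'_{i_k}*\cdots*t'_{i_1})(t_i)=(i-i_1+1)\,(t'_{i_k}*\cdots*t'_{i_2})(t_{i-i_1})$, which telescopes --- whereas you evaluate it as a matrix entry of the realization of $\mathcal H_{{\rm FdB}}^{cop}(\mathbb K)^*$ by the operators $e_n=r^{n+1}\frac{d}{dr}$ acting on $\mathbb K[[r]]$. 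That is exactly the point of view the paper itself develops only in Section 6 (the operators $DL^{i-1}$ there are, with respect to the pairing $z^m\leftrightarrow r^{m+1}$, transposes of your $e_i$, and Proposition \ref{prop6.4} and Remark \ref{rem6.6} identify the displacement data with matrix entries of the `smallest faithful representation' of $(G_\mathbb C[[r]],\circ)$). What your route buys is transparency: once the pairing is a matrix entry, the value $p_{i_1,\dots,i_k}(i)=\prod_{m=1}^{k-1}(1+i_{m+1}+\cdots+i_k)$ is just the visible outcome of applying $e_{i_k},\dots,e_{i_1}$ successively to $r$, and you track the delicate ordering conventions correctly.

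However, as written there is one step you assert rather than prove: the `key auxiliary fact' for arbitrary $k$. Checking it on single generators and fixing the ordering at $k=2$ does not by itself give the case $k\ge 3$; the passage from convolution products to composed operators needs a structural argument, since $t_i'\mapsto e_i$ is only a Lie \emph{anti}-isomorphism ($[t_i',t_j']=(i-j)t_{i+j}'$ while $[e_i,e_j]=(j-i)e_{i+j}$), so the induced map of enveloping algebras reverses products, and one must still know that the Hopf pairing of $t_i$ with a product of primitives is computed by the corresponding (reversed) product of operators applied to $r$. You can close this gap in either of two ways: cite the standard matrix-coefficient description of the Fa\`a di Bruno pairing --- the group $(G_\mathbb K[[r]],\circ)$ acts on $\mathbb K[[r]]$ by substitution and the $t_i$ are matrix coefficients of that action, see \cite[Sect.\,3.1]{FM}, which the paper itself invokes in Remark \ref{rem6.6} --- or prove the formula by induction on $k$ via $\langle t_i,u*t_j'\rangle=(i-j+1)\,\langle t_{i-j},u\rangle$, matched against the trivial operator identity that $e_j$ sends $c\,r^{i-j+1}$ to $(i-j+1)\,c\,r^{i+1}$. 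Note that this induction is precisely the paper's own proof, and it is also the `alternative' you mention at the end; so that alternative is not optional extra --- it (or the citation) is the missing lemma of your main argument.
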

\begin{proof}
For $k\ge 2$ and $\bar X:=X_{i_k}\cdots  X_{i_1}$, ${\rm deg}\,(\bar X):=i_1+\cdots + i_k$, we have
\[
\begin{array}{lr}
\displaystyle
\bar X(\rho_\mathbb K^*(t_i))=\rho_\mathbb K(\bar X)(t_i)=(t'_{i_k}*\cdots  *t'_{i_1})(t_i)
 = m_{\mathbb K}\circ\left(\bigl(t'_{i_k}*\cdots *t'_{i_{2}}\bigr)\otimes t_{i_1}'\right)\circ\Delta_{{\rm FdB}}^{op}(t_i)\medskip\\
\displaystyle =\sum_{j=0}^i \bigl(t'_{i_k}*\cdots *t'_{i_{2}}\bigr)(t_j)\cdot \frac{(j+1)!}{(i+1)!}\cdot t_{i_1}'\!\!\left(B_{i+1,j+1}\bigl(1,2!\,t_1, 3!\,t_2,\dots, (i-j+1)!\,t_{i-j}\bigr)\right).
\end{array}
\]
The last factor is nonzero if $i_1\le i$ and $j=i-i_1$, see \eqref{e2.18}, \eqref{e2.19}. In this case the previous expression implies that
\[
(X_{i_k}\cdots  X_{i_1})(\rho_\mathbb K^*(t_i))=(i-i_1+1)\cdot (X_{i_k}\cdots  X_{i_{2}})(\rho_\mathbb K^*(t_{i-i_1})).
\]
From the above recursive relations we obtain
\[
\bar X(\rho_\mathbb K^*(t_i))=(i-i_1+1)(i-i_1-i_2+1)\cdots 1\cdot \delta_{i\, {\rm deg}(\bar X)}.
\]
Also, $X_i(\rho_\mathbb K^*(t_i))=t_i'(t_i)=1$.
From here passing to the dual basis $\mathscr A^*\subset  \mathbb K\langle \mathscr A\rangle$ we get 
\[
\rho_\mathbb K^*(t_i)=\sum_{i_1+\dots +i_k=i} p_{i_1,\dots, i_k}(i)\,\alpha_{i_k}\dots\alpha_{i_1}=:\mathcal P_i(\alpha_1,\dots,\alpha_i).
\]
\end{proof}
\begin{R}
{\rm (1) Since $\rho_\mathbb K^*: \mathcal H_{{\rm FdB}}^{cop}(\mathbb K)\rightarrow \mathbb K\langle \mathscr A\rangle$ is a morphism of Hopf algebras,
\begin{equation}\label{e2.25}
\rho_\mathbb K^*\circ S_{\rm FdB}=S_{\mathbb K\langle\mathscr A\rangle}\circ\rho_\mathbb K^*\quad  {\rm and}\quad  (\rho_\mathbb K^*\otimes\rho_\mathbb K^*)\circ\Delta_{\rm FdB}^{op}=\Delta_{\mathbb K\langle \mathscr A\rangle}\circ \rho_\mathbb K^*.
\end{equation}
These and Theorem \ref{te5.1} lead to the following combinatorial relations in the shuffle Hopf algebra $\mathbb K\langle\mathscr A\rangle$ between the displacement and the Bell polynomials:
\begin{equation}\label{eq5.2a}
\bar{\mathcal P_i}:=\frac{1}{(i+1)!}\sum_{j=1}^{i}(-1)^j B_{i+j,j}\bigl(0,2!\,\mathcal P_1,3!\,\mathcal P_2,\dots, (i+1)!\, \mathcal P_{i}\bigr),
\end{equation}
where
\[
\bar{\mathcal P_i}(\alpha_1,\dots, \alpha_i):=(S_{\mathbb K\langle\mathscr A\rangle}\circ\rho_\mathbb K^*)(t_i)=\sum_{i_1+\dots +i_k=i} (-1)^k\cdot p_{i_k,\dots, i_1}(i)\,\alpha_{i_k}\dots\alpha_{i_1},\qquad i\in\N;
\]
\begin{equation}\label{eq5.2b}
\Delta_{\mathbb K\langle\mathscr A\rangle}(\mathcal P_i)=\sum_{j=0}^i \mathcal P_j\otimes \frac{(j+1)!}{(i+1)!}B_{i+1,j+1}\bigl(1,2!\,\mathcal P_1, 3!\,\mathcal P_2,\dots, (i-j+1)!\,\mathcal P_{i-j}\bigr),
\end{equation}
where $\mathcal P_0:=1$ and for $i\in\N$,
\[
\begin{array}{l}
\displaystyle
\Delta_{\mathbb K\langle \mathscr A\rangle}(\mathcal P_i)(\alpha_1,\dots,\alpha_i)\medskip \\
\displaystyle :=\sum_{i_1+\dots +i_k=i} p_{i_1,\dots, i_k}(i)\cdot\left(1\otimes \alpha_{i_k}\dots\alpha_{i_1}+\alpha_{i_k}\dots\alpha_{i_1}\otimes 1+\sum_{j=1}^{k}\alpha_{i_k} \dots \alpha_{i_{j+1}}\otimes \alpha_{i_{j}}\dots \alpha_{i_1}\right).
\end{array}
\]
(2) It is worth noting that the displacement polynomials $\mathcal P_i\in\mathbb K\langle\mathscr A\rangle $, $i\in\mathbb Z_+$, are algebraically independent over $\mathbb K$.
}
\end{R}
\subsubsection{{\bf Recurrence Relations for the Displacement Polynomials}} We set $\mathcal P_0:=I\in \mathbb K\langle\mathbf X\rangle$.
\begin{Th}\label{recur}
The following relations hold for all $n\in\N$:
\begin{equation}\label{devlin}
\mathcal P_n(X_1,\dots, X_n)=\sum_{i=1}^n (n-i+1)\cdot \mathcal P_{n-i}(X_1,\dots, X_{n-i})\cdot X_{i}.
\end{equation}
\end{Th}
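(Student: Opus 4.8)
The plan is to prove \eqref{devlin} by directly expanding the definition \eqref{e7} of $\mathcal P_n$ and grouping its monomials according to the rightmost free variable. Every monomial in $\mathcal P_n$ has the shape $X_{i_k}\cdots X_{i_2}X_{i_1}$ with $i_1+\cdots+i_k=n$, so I would rewrite
$$\mathcal P_n(X_1,\dots,X_n)=\sum_{i=1}^{n}\ \sum_{i_2+\cdots+i_k=n-i}p_{i,i_2,\dots,i_k}(n)\,X_{i_k}\cdots X_{i_2}\,X_i,$$
where the outer index $i$ plays the role of $i_1$ and the inner sum ranges over all compositions of $n-i$, the empty composition being included when $i=n$. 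Everything then hinges on showing that the inner sum equals $(n-i+1)\,\mathcal P_{n-i}(X_1,\dots,X_{n-i})$.

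The key step is a one-line factorization of the coefficient $p_{i_1,\dots,i_k}(n)$. Writing $s_j:=i_1+\cdots+i_j$ for the partial sums, \eqref{e5} reads $p_{i_1,\dots,i_k}(n)=\prod_{j=1}^{k}(n-s_j+1)$. I would peel off the factor with $j=1$, namely $n-i_1+1$, and compare the remaining product $\prod_{j=2}^{k}(n-s_j+1)$ with $p_{i_2,\dots,i_k}(n-i_1)$. Setting $s_j':=i_2+\cdots+i_{j+1}=s_{j+1}-i_1$ for the partial sums of the shortened composition, one checks term by term that $(n-i_1)-s_j'+1=n-s_{j+1}+1$, so the two products agree and
$$p_{i_1,i_2,\dots,i_k}(n)=(n-i_1+1)\cdot p_{i_2,\dots,i_k}(n-i_1).$$
Substituting this into the expansion above, pulling $(n-i+1)$ out of the inner sum, and invoking \eqref{e7} for the index $n-i$ identifies the inner sum with $(n-i+1)\,\mathcal P_{n-i}(X_1,\dots,X_{n-i})$, which is precisely \eqref{devlin}.

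I expect no genuine obstacle here; the difficulty is purely bookkeeping. The two points that need care are the reindexing of partial sums in the factorization identity, and the boundary term $i=n$: there the inner composition is empty, the shortened coefficient is the empty product $1$, and the convention $\mathcal P_0:=I$ makes the summand equal to $(n-n+1)\cdot\mathcal P_0\cdot X_n=X_n$, matching the leading monomial of $\mathcal P_n$. Once the factorization of $p_{i_1,\dots,i_k}(n)$ is in hand, the recurrence follows immediately, so I would organize the write-up around establishing that single identity.
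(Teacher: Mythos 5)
Your proof is correct, but it takes a genuinely different route from the paper's. You work entirely at the level of the definitions \eqref{e5} and \eqref{e7}: the telescoping factorization $p_{i_1,i_2,\dots,i_k}(n)=(n-i_1+1)\cdot p_{i_2,\dots,i_k}(n-i_1)$ holds exactly as you verify it, grouping monomials by the rightmost letter is a bijection between compositions of $n$ and pairs consisting of a first part $i$ and a composition of $n-i$, and the boundary term $i=n$ is covered by the convention $\mathcal P_0:=I$ fixed just before the theorem. The paper instead proves the recurrence Hopf-algebraically: it introduces the generating function $\mathscr P_\mathbb K=\bigl(1\otimes I-\sum_{j\ge 1}t_j'\otimes X_j\,t^j\bigr)^{-1}$ of \eqref{gf}, identifies $\mathscr P_\mathbb K(t_n)=\mathcal P_n(X_1,\dots,X_n)\,t^n$ via the computation in Theorem \ref{te5.1}, and then evaluates the identity $\mathscr P_\mathbb K=1\otimes I+\mathscr P_\mathbb K\times\sum_{j\ge 1}t_j'\otimes X_j\,t^j$ at $t_n$, where the key step $(t'_{i_k}*\cdots*t'_{i_1}*t_{i}')(t_n)=(t'_{i_k}*\cdots*t'_{i_1})(t_{n-i})\cdot(n-i+1)$ comes from the Fa\`{a} di Bruno coproduct \eqref{e2.18}. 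That convolution identity is exactly the dual incarnation of your scalar factorization of $p_{i_1,\dots,i_k}(n)$, so the two arguments share the same combinatorial core; the difference is the packaging. Your version is more elementary and self-contained (it uses no coalgebra structure and does not depend on Theorem \ref{te5.1}), and it extends verbatim to the generalized polynomials $\widetilde{\mathcal P}_n(\,\cdot\,;t)$ of Section 6, since $p_{i_1,\dots,i_k}(t)=(t-i_1+1)\cdot p_{i_2,\dots,i_k}(t-i_1)$ holds for every $t$. What the paper's machinery buys is the object $\mathscr P_\mathbb K$ itself, which is reused in Section 6 (Proposition \ref{prop6.1}, Theorem \ref{te6.2}) to derive recurrences for the generalized displacement polynomials, and which ties the recurrence to the embedding $\rho_\mathbb K^*$ of Theorem \ref{te5.1} rather than leaving it as an isolated identity.
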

\begin{proof}
First, we show that 
\begin{equation}\label{gf}
\mathscr P_\mathbb K=\bigl(1\otimes I-\sum_{j=1}^\infty t_j'\otimes X_j\, t^j\bigr)^{-1}\in \mathcal H_{{\rm FdB}}^{cop}(\mathbb K)^*\otimes L(\mathbb K\langle\mathscr A\rangle;\mathbb K),
\end{equation}
is the `generating function' for the displacement polynomials.

Indeed, by the definition,
$$
\mathscr P_\mathbb K=1\otimes I+\sum_{i=1}^\infty\left(\sum_{j=1}^\infty t_j'\otimes X_j\, t^j\right)^i=1\otimes I+\sum_{i=1}^\infty\left(\sum_{i_1+\cdots+i_k=i}t'_{i_k}*\cdots  *t'_{i_1}\otimes X_{i_k}\cdots  X_{i_1}\right)\, t^i.
$$
In particular, due to the computation of Theorem \ref{te5.1},
\[
\mathscr P_\mathbb K(t_n):=1(t_n)\cdot I+\sum_{i=1}^\infty\left(\sum_{i_1+\cdots+i_k=i}(t'_{i_k}*\cdots  *t'_{i_1})(t_n)\otimes X_{i_k}\cdots  X_{i_1}\right)\, t^i=\mathcal P_n(X_1,\dots, X_n)\, t^n.
\]

Next, we rewrite the expression for $\mathscr P_\mathbb K$ as follows
\[
\mathscr P_\mathbb K=1\otimes I+ \mathscr P_\mathbb K\times \sum_{j=1}^\infty t_j'\otimes X_j\, t^j;
\]
here $\times $ denotes the product in the algebra $\mathcal H_{{\rm FdB}}^{cop}(\mathbb K)^*\otimes L(\mathbb K\langle\mathscr A\rangle;\mathbb K)$. 

From this presentation we obtain
\[
\begin{array}{l}
\displaystyle
\mathscr P_\mathbb K(t_n)=\left(\,\sum_{i=1}^n \left(\sum_{i_1+\cdots+i_k=n-i}t'_{i_k}*\cdots  *t'_{i_1}\otimes X_{i_k}\cdots  X_{i_1}\right)\, t^{n-i}\times \bigl(t_{i}'\otimes X_{i}\, t^{i}\bigr)\right)(t_n)\medskip\\
\displaystyle \qquad \ \ \ = \sum_{j=1}^n \left(\sum_{i_1+\cdots+i_k=n-i}(t'_{i_k}*\cdots  *t'_{i_1} *t_{i}')(t_n)\otimes X_{i_k}\cdots  X_{i_1}\cdot X_{i}\right)\, t^n\medskip\\
\displaystyle \qquad \ \ \ =\sum_{i=1}^n \left(\sum_{i_1+\cdots+i_k=n-i}(t'_{i_k}*\cdots  *t'_{i_1})(t_{n-i})\cdot (n-i+1)\otimes X_{i_k}\cdots  X_{i_1}\cdot X_{i}\right)\, t^n\medskip\\
\displaystyle \qquad \ \ \ =\sum_{i=1}^n (n-i+1)\cdot \mathcal P_{n-i}(X_1,\dots, X_{n-i})\cdot X_{i}\cdot t^n.
\end{array}
\]
Comparing the latter expression for $\mathscr P_\mathbb K(t_n)$ with the former one we attain the required recurrence relations.
\end{proof}
\subsection{First Return Map}
 Consider equation (\ref{e1}) corresponding to
$a=(a_{1}, a_{2},\dots)\in \mathscr X$:
\begin{equation}\label{eq5.2}
\frac{dv}{dx}=\sum_{i=1}^{\infty}a_{i}(x) v^{i+1},\ \ \ x\in I_{T}.
\end{equation}
We relate to this equation the following one:
\begin{equation}\label{e3.3}
\frac{dH}{dx}=\left(\,\sum_{i=1}^{\infty} a_{i}(x)\, t_i'\, t^{i}\right) H,\ \ \ x\in I_{T}.
\end{equation}
Solving (\ref{e3.3}) by Picard iteration we get a solution $H_{a}: I_{T}\rightarrow L\bigl(\mathcal H_{{\rm FdB}}^{cop}(\mathbb C);\mathbb C\bigr)$, $H_{a}(0)=1$,  whose coefficients in the series expansion are Lipschitz functions on $I_{T}$. One easily checks that (cf. \eqref{e2.16})
\begin{equation}\label{eq5.4}
H_a(T)=1+\sum_{i=1}^{\infty}\left(\ \!\sum_{i_{1}+\cdots +i_{k}=i}I_{i_{1},\dots, i_{k}}(a)\,t'_{i_{k}}*\cdots *t'_{i_{1}}\right)t^{i}=(\rho_\mathbb C\circ E)(a).
\end{equation}
 In particular, $H_a(T)\in G_{\mathcal H_{{\rm FdB}}^{cop}(\mathbb C)}(\mathbb C)$. Thus, the following result for the first return map $\mathcal P(a)$ of \eqref{eq5.2} holds:
 \begin{Th}\label{te5.2}
 \begin{equation}\label{eqte5.2}
\mathcal P(a)=\Theta\bigl(H_a(T)\bigr)=(\Theta\circ \rho_\mathbb C\circ E)(a).
 \end{equation}
 \end{Th}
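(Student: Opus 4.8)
The plan is to assemble three ingredients that are already at our disposal: the iterated-integral expansion \eqref{eq5.4} for $H_a(T)$, the definition \eqref{eq4.7} of the group isomorphism $\Theta$, and the pairing computed inside the proof of Theorem \ref{te5.1}. The second equality in \eqref{eqte5.2} requires no work: formula \eqref{eq5.4} already asserts $H_a(T)=(\rho_{\mathbb{C}}\circ E)(a)$, so applying $\Theta$ to both sides yields $\Theta(H_a(T))=(\Theta\circ\rho_{\mathbb{C}}\circ E)(a)$. Hence the whole matter reduces to the first equality $\mathcal P(a)=\Theta(H_a(T))$.

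To establish it I would first note that, as recorded just before the statement, $H_a(T)\in G_{\mathcal H_{{\rm FdB}}^{cop}(\mathbb{C})}(\mathbb{C})$, so $\Theta$ is applicable, and that its homogeneous component of degree $i$ is the functional $f_i=\sum_{i_1+\cdots+i_k=i} I_{i_1,\dots,i_k}(a)\,t'_{i_k}*\cdots*t'_{i_1}\in\mathcal H_{{\rm FdB}}^i(\mathbb{C})^*$. By \eqref{eq4.7} the series $\Theta(H_a(T))$ is determined by the numbers $f_i(t_i)$, so it suffices to evaluate each $f_i$ on the generator $t_i$. Here the key input is the pairing $(t'_{i_k}*\cdots*t'_{i_1})(t_i)=p_{i_1,\dots,i_k}(i)$, valid whenever $i_1+\cdots+i_k=i$; this is exactly the recursive computation made in the proof of Theorem \ref{te5.1}, where $\bar X(\rho_{\mathbb{K}}^*(t_i))=\rho_{\mathbb{K}}(\bar X)(t_i)=(t'_{i_k}*\cdots*t'_{i_1})(t_i)$ was shown to equal $p_{i_1,\dots,i_k}(i)$ on the diagonal of degrees. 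Substituting this into $f_i(t_i)$ gives $f_i(t_i)=\sum_{i_1+\cdots+i_k=i} p_{i_1,\dots,i_k}(i)\,I_{i_1,\dots,i_k}(a)$, which is precisely the coefficient of $r^{i+1}$ in the explicit first-return-map formula \eqref{e4}. Summing over $i$ then gives $\Theta(H_a(T))(r)=r+\sum_{i=1}^\infty f_i(t_i)\,r^{i+1}=\mathcal P(a)$, as required.

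Since both nontrivial inputs --- the Chen-type expansion \eqref{eq5.4} obtained by Picard iteration and the pairing of Theorem \ref{te5.1} --- are already available, I do not expect a genuine obstacle; the proof is essentially bookkeeping. The one point that must be handled with care is the normalization: one has to use the convention \eqref{eq4.6}, namely $f(t)(t_i)=f_i(t_i)\,t^i$, so that the degree-$i$ part of $H_a(T)$ is paired against the single generator $t_i$ and not against a product of lower generators, on which it would vanish by \eqref{eq4.4}. One should likewise confirm that the order $t'_{i_k}*\cdots*t'_{i_1}$ produced by \eqref{eq5.4} matches the order $X_{i_k}\cdots X_{i_1}$ under the algebra morphism $\rho_{\mathbb{K}}$, which is exactly the convention fixed in the proof of Theorem \ref{te5.1}.
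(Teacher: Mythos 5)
Your proposal is correct and follows essentially the same route as the paper: the paper's proof likewise applies $\Theta$ to the expansion \eqref{eq5.4} of $H_a(T)$ and invokes the pairing $(t'_{i_k}*\cdots*t'_{i_1})(t_i)=p_{i_1,\dots,i_k}(i)$ computed in the proof of Theorem \ref{te5.1} to recover the coefficients of \eqref{e4}. Your version merely spells out the bookkeeping (the normalization $f(t)(t_i)=f_i(t_i)\,t^i$ and the ordering convention) that the paper leaves implicit.
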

 \begin{proof}
 Using the computation of the proof of Theorem \ref{te5.1} we obtain, see \eqref{e4},
 \[
 \begin{array}{l}
 \displaystyle
 \Theta\bigl(H_a(T)\bigr)(r)=r+\sum_{i=1}^{\infty}\left(\ \!\sum_{i_{1}+\cdots +i_{k}=i}I_{i_{1},\dots, i_{k}}(a)\,(t'_{i_{k}}*\cdots *t'_{i_{1}})(t_i)\right)r^{i+1}\medskip\\
 \displaystyle
 \qquad \qquad \qquad  =r+\sum_{i=1}^{\infty}\left(\ \!\sum_{i_{1}+\cdots +i_{k}=i}p_{i_1,\dots, i_k}(i)\cdot I_{i_{1},\dots, i_{k}}(a)\right)r^{i+1}=:\mathcal P(a)(r).
 \end{array}
 \]
 \end{proof}
 \subsection{Group of Formal Centers}
 Let $\mathcal J_\mathbb K\subset \mathbb K\langle\mathscr A\rangle$ be the ideal generated by the displacement polynomials $\mathcal P_i$, $i\in\N$ (cf. \eqref{eq5.1}).  Due to \eqref{e2.25}, $\mathcal J_\mathbb K$ is a {\em graded Hopf ideal} of $\mathbb K\langle\mathscr A\rangle$, i.e. $\mathcal J_\mathbb K=\oplus_{i\ge 1}\,\mathbb K_i\langle\mathscr A\rangle\cap \mathcal J_\mathbb K$. Thus
 the quotient $\mathbb K\langle\mathscr A\rangle/\mathcal J_\mathbb K=\oplus_{i\ge 0}\, \mathbb K_i\langle\mathscr A\rangle/\bigl(\mathbb K_i\langle\mathscr A\rangle\cap \mathcal J_\mathbb K\bigr)$ is a connected graded Hopf algebra. The dual Hopf algebra $\bigl(\mathbb K\langle\mathscr A\rangle/\mathcal J_\mathbb K\bigr)^*$ is the connected graded Hopf subalgebra of $\mathbb K\langle\mathbf X\rangle$ consisting of elements vanishing on $\mathcal J_\mathbb K$. By the definition of $\mathcal J_\mathbb K$, 
 \[
 \bigl(\mathbb K\langle\mathscr A\rangle/\mathcal J_\mathbb K\bigr)^*=\mathbb K_0\langle\mathbf X\rangle\oplus {\rm Ker}\,\rho_{\mathbb K}.
 \]
 Hence, $\bigl(\mathbb K\langle\mathscr A\rangle/\mathcal J_\mathbb K\bigr)^*\cap \mathbb K_i\langle\mathbf X\rangle$, $i\ge 1$, consists of elements
 \begin{equation}\label{eq5.6}
f_i=\sum_{i_{1}+\cdots +i_{k}=i}c_{i_{1},\dots, i_{k}}X_{i_{k}}\cdots X_{i_{1}}\quad {\rm such\ that}\quad \sum_{i_{1}+\cdots +i_{k}=i}p_{i_1,\dots, i_k}(i)\cdot c_{i_{1},\dots, i_{k}}=0.
 \end{equation}
 
 Next, by the Milnor-Moore theorem $\bigl(\mathbb K\langle\mathscr A\rangle/\mathcal J_\mathbb K\bigr)^*$ is the universal enveloping algebra of the Lie algebra ${\rm Prim}\, \bigl(\mathbb K\langle\mathscr A\rangle/\mathcal J_\mathbb K\bigr)^*$ of its primitive elements. By definition, 
 \[
 {\rm Prim} \bigl(\mathbb K\langle\mathscr A\rangle/\mathcal J_\mathbb K\bigr)^*={\rm Prim}\, \mathbb K\langle\mathbf X\rangle\cap {\rm Ker}\,\rho_{\mathbb K}.
 \]  
 An explicit computation using that $[t_i',t_j']=(i-j)\,t_{i+j}'$ for all $i,j\in\N$
 (cf. \cite[Sect.\,3.2.2]{Br2} for similar arguments) shows that 
${\rm Prim} \bigl(\mathbb K\langle\mathscr A\rangle/\mathcal J_\mathbb K\bigr)^*\cap \mathbb K_i\langle\mathbf X\rangle$
 consists of elements  
 $$
g_i=\sum_{i_{1}+\dots +i_{k}=i}c_{i_{1},\dots, i_{k}}[X_{i_{k}},[X_{i_{k-1}},[\ \cdots , [X_{i_{2}},X_{i_{1}}]\cdots \ ]]]
$$
such that
\begin{equation}\label{eq5.7}
\begin{array}{c}
\displaystyle 
\rho_\mathbb K(g_i)=\sum_{i_{1}+\cdots + i_{k}=i}c_{i_{1},\dots, i_{k}}\cdot\gamma_{i_{1},\dots, i_{k}}=0,\quad {\rm where}\quad  \gamma_{i}=1\ \ \ {\rm and}\medskip\\
\gamma_{i_{1},\dots, i_{k}}=
(i_{2}-i_{1})(i_3- i_{2}-i_{1})\cdots
(i_k-i_{k-1}-\cdots -i_{1})\ \ \ {\rm for}\ \ \ k\geq 2.
\end{array}
\end{equation}

In turn, the Lie algebra $\mathfrak g_{\mathbb K\langle\mathscr A\rangle/\mathcal J_\mathbb K}(\mathbb K)$ of infinitesimal characters of $\mathbb K\langle\mathscr A\rangle/\mathcal J_\mathbb K$ is the closure of ${\rm Prim}\, \bigl(\mathbb K\langle\mathscr A\rangle/\mathcal J_\mathbb K\bigr)^*$ in ${\rm Prim}\, \mathbb K\langle\mathbf X\rangle$, i.e. it consists of series $\sum_{i=1}^\infty g_i\, t^i$ with $g_i\in {\rm Prim} \bigl(\mathbb K\langle\mathscr A\rangle/\mathcal J_\mathbb K\bigr)^*\cap \mathbb K_i\langle\mathbf X\rangle$, $i\in\N$. Also,
the group of characters of $\mathbb K\langle\mathscr A\rangle/\mathcal J_\mathbb K$ is a subgroup of $G_{\mathbb K\langle\mathscr A\rangle}(\mathbb K)$ consisting of characters vanishing on $\mathcal J_\mathbb K$, i.e. 
\[
G_{\mathbb K\langle\mathscr A\rangle/\mathcal J_\mathbb K}(\mathbb K)=G_{\mathbb K\langle\mathscr A\rangle}(\mathbb K)\cap {\rm Ker}\,\rho_\mathbb K.
\]
In particular, if $\mathbb K=\mathbb C$, then, due to Theorem \ref{te5.2}, $G_{\mathbb C\langle\mathscr A\rangle/\mathcal J_\mathbb C}(\mathbb C)=\widehat{\mathscr C}_f$, the group of formal centers of equation \eqref{e1}.

By definition, $\mathfrak g_{\mathbb K\langle\mathscr A\rangle/\mathcal J_\mathbb K}(\mathbb K)$ is the Lie algebra of the Lie group $G_{\mathbb K\langle\mathscr A\rangle/\mathcal J_\mathbb K}(\mathbb K)$ and the exponential map  maps $\mathfrak g_{\mathbb K\langle\mathscr A\rangle/\mathcal J_\mathbb K}(\mathbb K)$ homeomorphically onto $\mathfrak g_{\mathbb K\langle\mathscr A\rangle/\mathcal J_\mathbb K}(\mathbb K)$.

Note that the linear continuous map $\pi: \mathfrak g_{\mathbb K\langle\mathscr A\rangle}(\mathbb K)\rightarrow  \mathfrak g_{\mathbb K\langle\mathscr A\rangle/\mathcal J_\mathbb K}(\mathbb K)$,
\begin{equation}\label{eq5.8}
\pi\left(\sum_{i=1}^\infty g_i\, t^i\right):=\sum_{i=1}^\infty \bigl(g_i-\rho_\mathbb K(g_i)\cdot X_i\bigr)\, t^i,\qquad \sum_{i=1}^\infty g_i\, t^i\in \mathfrak g_{\mathbb K\langle\mathscr A\rangle}(\mathbb K),
\end{equation}
is a projection so that as a topological vector space $\mathfrak g_{\mathbb K\langle\mathscr A\rangle}(\mathbb K)$ is isomorphic to the direct sum of topological vector spaces $\mathfrak g_{\mathbb K\langle\mathscr A\rangle/\mathcal J_\mathbb K}(\mathbb K)\oplus \mathfrak g_{\mathcal H_{{\rm FdB}}^{cop}(\mathbb K}(\mathbb K)$. This implies that as a topological space $G_{\mathbb K\langle\mathscr A\rangle}(\mathbb K)$ is homeomorphic to the direct product of topological spaces $G_{\mathbb K\langle\mathscr A\rangle/\mathcal J_\mathbb K}(\mathbb K)\times G_\mathbb K[[r]]$.

\begin{R}
{\rm According to the Shirshov-Witt theorem (see, e.g., \cite{Re}), ${\rm Prim} \bigl(\mathbb K\langle\mathscr A\rangle/\mathcal J_\mathbb K\bigr)^*$ is a free Lie algebra. It is easily seen that the set of generators of ${\rm Prim} \bigl(\mathbb K\langle\mathscr A\rangle/\mathcal J_\mathbb K\bigr)^*$ is countable and each generator can be chosen to be homogeneous (i.e. belonging to some ${\rm Prim} \bigl(\mathbb K\langle\mathscr A\rangle/\mathcal J_\mathbb K\bigr)^*\cap \mathbb K_i\langle\mathbf X\rangle$). Let $\mathbf Y=\{Y_i :  Y_i\in \mathbb K_{n_i}\langle\mathbf X\rangle ,\ i\in\N\}$ be the set of generators of ${\rm Prim} \bigl(\mathbb K\langle\mathscr A\rangle/\mathcal J_\mathbb K\bigr)^*$ ordered such that $n_i\le n_j$ for $i\le j$ ($i,j\in\N$). Then the
bijective map $\mathbf X\rightarrow\mathbf Y$, $X_i\mapsto Y_i$, $i\in\N$, extends to an isomorphism of (non-graded) Hopf algebras $\varphi:\mathbb K\langle\mathbf X\rangle\rightarrow  \bigl(\mathbb K\langle\mathscr A\rangle/\mathcal J_\mathbb K\bigr)^*$ 
and induces an isomorphism between topological groups $G_{\mathbb K\langle\mathscr A\rangle}(\mathbb K)$ and $G_{\mathbb K\langle\mathscr A\rangle/\mathcal J_\mathbb K}(\mathbb K)$. (Thus the group of formal centers of equation \eqref{e1} is isomorphic to the group of characters of the shuffle Hopf algebra $\mathbb C\langle\mathscr A\rangle$.)
In turn, the transpose map $\varphi^*$ determines a Hopf algebra isomorphism between $\mathbb K\langle\mathscr A\rangle/\mathcal J_\mathbb K$ and the shuffle Hopf algebra $\mathbb K\langle\mathscr A\rangle$.  
}
\end{R}
\subsection{Equations with Finitely Many Terms}
The Hopf algebra approach to the center problem for equations \eqref{e1} with finitely many terms is similar to the one already described so we just sketch it leaving details to the readers.

Let $\mathscr A_N$ be the subalphabet of $\mathscr A$ consisting of letters $\alpha_1,\dots, \alpha_N$. Let $\mathbb K\langle \mathscr A_N\rangle$ be the graded Hopf subalgebra of $\mathbb K\langle \mathscr A\rangle$ generated by words in the letters of $\mathscr A_N$. We have the natural projection $(q_{N})_{\mathbb K}: \mathbb K\langle \mathscr A\rangle\rightarrow \mathbb K\langle \mathscr A_N\rangle$ sending each word in $\mathscr A^*$ containing letters of $\mathscr A\setminus\mathscr A_N$ to zero and identity on the other words. The dual of $\mathbb K\langle \mathscr A_N\rangle$ is the Hopf subalgebra $\mathbb K\langle\mathbf X_N\rangle$, $\mathbf X_N:=\{X_1,\dots, X_N\}$, of $\mathbb K\langle\mathbf X\rangle$ generated by polynomials in $I$ and variables in $\mathbf X_N$.  We have the natural projection
$(Q_N)_{\mathbb K}:\mathbb K\langle\mathbf X\rangle\rightarrow \mathbb K\langle\mathbf X_N\rangle$ defined by equating all variables $X_i$, $i>N$, to zero. The Hopf map $(Q_N)_{\mathbb K}$ extends by continuity to the projection $L(\mathbb K\langle \mathscr A\rangle;\mathbb K)\rightarrow L(\mathbb K\langle \mathscr A_N\rangle;\mathbb K)$ denoted by the same symbol. 

Next, the Lie algebra ${\rm Prim}\, \mathbb K\langle\mathscr A_N\rangle\subset {\rm Prim}\, \mathbb K\langle\mathscr A\rangle$ is the free Lie algebra generated by $X_1,\dots, X_N$ with the grading induced from ${\rm Prim}\, \mathbb K\langle\mathscr A\rangle$. The Lie algebra  $\mathfrak g_{\mathbb K\langle\mathscr A_N\rangle}(\mathbb K)$ of infinitesimal characters of $\mathbb K\langle\mathscr A_N\rangle$ is the closure of ${\rm Prim}\, \mathbb K\langle\mathscr A_N\rangle$ in $\mathfrak g_{\mathbb K\langle\mathscr A\rangle}(\mathbb K)$. Also, it coincides with  $(Q_N)_{\mathbb K}\bigl(\mathfrak g_{\mathbb K\langle\mathscr A\rangle}(\mathbb K)\bigr)$. Similarly, the Lie group $G_{\mathbb K\langle\mathscr A_N\rangle}(\mathbb K)$ of characters  of $\mathbb K\langle\mathscr A_N\rangle$ coincides with  $(Q_N)_{\mathbb K}\bigl(G_{\mathbb K\langle\mathscr A\rangle}(\mathbb K)\bigr)$. 

We set (see Section~5.1)
\[
(\rho_N)_{\mathbb K}:=\rho_\mathbb K|_{\mathbb K\langle\mathbf X_N\rangle}:\mathbb K\langle\mathbf X_N\rangle\rightarrow \mathcal H_{{\rm FdB}}^{cop}(\mathbb K)^*.
\]
Then the transpose map $(\rho_N)_{\mathbb K}^*: \mathcal H_{{\rm FdB}}^{cop}(\mathbb K)\rightarrow\mathbb K\langle\mathscr A_N\rangle$ satisfies
\[
(\rho_N)_{\mathbb K}^*=(q_N)_{\mathbb K}\circ\rho_{\mathbb K}^*.
\]
In particular, setting $\mathcal P_i^N:=\mathcal P_i$ for $i\le N$ and
$\mathcal P_i^N(X_1,\dots, X_N):=\mathcal P_i(X_1,\dots, X_N,0,0,\dots)$  for $i>N$, we obtain
\begin{Proposition}\label{pr5.6}
Map $(\rho_N)_\mathbb K^*$ is a monomorphism of graded Hopf algebras given on the generators by the formulas
\begin{equation}\label{eq6.1}
(\rho_N)_\mathbb K^*(t_i)=(q_N)_\mathbb K\bigl(\mathcal P_i(\alpha_1,\dots, \alpha_i)\bigr)=:\mathcal P_i^N(\alpha_1,\dots,\alpha_N),\qquad i\in\N.
\end{equation}
\end{Proposition}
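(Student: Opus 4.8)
The plan is to deduce the statement entirely from Theorem~\ref{te5.1} and the graded duality between $\mathbb K\langle\mathscr A\rangle$ and $\mathbb K\langle\mathbf X\rangle$, exploiting that every object in sight is a restriction to, or the dual of a restriction to, the subalphabet $\mathscr A_N$. First I would record the factorization $(\rho_N)_\mathbb K=\rho_\mathbb K\circ\iota_N$, where $\iota_N:\mathbb K\langle\mathbf X_N\rangle\hookrightarrow\mathbb K\langle\mathbf X\rangle$ is the inclusion of the graded Hopf subalgebra. Since $\iota_N$ and $\rho_\mathbb K$ are morphisms of graded Hopf algebras, so is $(\rho_N)_\mathbb K$; because each homogeneous component is finite-dimensional, passing to graded duals turns it into a morphism of graded Hopf algebras $(\rho_N)_\mathbb K^*$ in the opposite direction, with $(\rho_N)_\mathbb K^*=\iota_N^*\circ\rho_\mathbb K^*$. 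The transpose $\iota_N^*$ of the subalphabet inclusion is exactly the projection $(q_N)_\mathbb K$ (it is the identity on words in $\alpha_1,\dots,\alpha_N$ and annihilates any word containing a letter of higher index), which yields the stated relation $(\rho_N)_\mathbb K^*=(q_N)_\mathbb K\circ\rho_\mathbb K^*$ and hence both the graded-Hopf-morphism claim and the generator formula \eqref{eq6.1}.

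The generator formula is then immediate. By Theorem~\ref{te5.1}, $\rho_\mathbb K^*(t_i)=\mathcal P_i(\alpha_1,\dots,\alpha_i)$, so $(\rho_N)_\mathbb K^*(t_i)=(q_N)_\mathbb K(\mathcal P_i(\alpha_1,\dots,\alpha_i))$. For $i\le N$ every word occurring in $\mathcal P_i$ involves only letters $\alpha_j$ with $j\le i\le N$, so $(q_N)_\mathbb K$ acts as the identity and $(\rho_N)_\mathbb K^*(t_i)=\mathcal P_i$. For $i>N$, applying $(q_N)_\mathbb K$ discards precisely those words containing some $\alpha_j$ with $j>N$, which is the same as setting $X_j=0$ for $j>N$ in \eqref{e7}; this leaves $\mathcal P_i(\alpha_1,\dots,\alpha_N,0,0,\dots)=\mathcal P_i^N$, as required.

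The only real content is injectivity. By graded duality, $(\rho_N)_\mathbb K^*$ is a monomorphism if and only if $(\rho_N)_\mathbb K$ is surjective in each degree, i.e. the image of $(\rho_N)_\mathbb K$ — the sub-Hopf-algebra of $\mathcal H_{{\rm FdB}}^{cop}(\mathbb K)^*$ generated by $t_1',\dots,t_N'$ — is all of $\mathcal H_{{\rm FdB}}^{cop}(\mathbb K)^*$. Here I would invoke the bracket relation \eqref{eq4.5}, $[t_i',t_j']=(i-j)\,t_{i+j}'$: from $t_1'$ and $t_2'$ one has $[t_1',t_m']=(1-m)\,t_{m+1}'$ with $1-m\ne0$ for $m\ge2$, so by induction the Lie subalgebra generated by $t_1',t_2'$ already contains every $t_m'$, hence equals ${\rm Prim}\,\mathcal H_{{\rm FdB}}^{cop}(\mathbb K)^*$. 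Since $\mathcal H_{{\rm FdB}}^{cop}(\mathbb K)^*$ is the universal enveloping algebra of this Lie algebra (Milnor--Moore), the sub-Hopf-algebra generated by $t_1',\dots,t_N'$ is everything whenever $N\ge2$, giving surjectivity of $(\rho_N)_\mathbb K$ and therefore injectivity of $(\rho_N)_\mathbb K^*$.

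The step I expect to demand the most care is exactly this last surjectivity argument, because it is genuinely false for $N=1$: the Lie algebra generated by $t_1'$ alone is abelian, $(\rho_1)_\mathbb K$ lands in the polynomial subalgebra $\mathbb K[t_1']$, and $(\rho_1)_\mathbb K^*$ is not injective (for instance $t_2-t_1^2$ lies in its kernel, since both $t_2$ and $t_1^2$ map to $2\alpha_1^2$). Thus the monomorphism assertion should be understood for $N\ge2$, and the proof must use the nontrivial bracket structure \eqref{eq4.5} rather than any naive dimension count.
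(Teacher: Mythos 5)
Your proposal is correct, and where the paper actually argues it follows the same route: Section 5.4 is explicitly only a sketch ("we just sketch it leaving details to the readers"), and the paper obtains \eqref{eq6.1} exactly as you do, by writing $(\rho_N)_\mathbb K=\rho_\mathbb K|_{\mathbb K\langle\mathbf X_N\rangle}$ (i.e. $\rho_\mathbb K$ composed with the inclusion $\mathbb K\langle\mathbf X_N\rangle\hookrightarrow\mathbb K\langle\mathbf X\rangle$), transposing to get $(\rho_N)_\mathbb K^*=(q_N)_\mathbb K\circ\rho_\mathbb K^*$, and invoking Theorem \ref{te5.1}. What you add, and what the paper omits entirely, is the injectivity argument; this is indeed the only nontrivial point, since composing the monomorphism $\rho_\mathbb K^*$ with the projection $(q_N)_\mathbb K$ does not automatically yield a monomorphism. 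Your reduction of injectivity of $(\rho_N)_\mathbb K^*$ to degreewise surjectivity of $(\rho_N)_\mathbb K$, proved from the Witt-type relations \eqref{eq4.5} by generating every $t_m'$ from $t_1'$ and $t_2'$ inside the Lie algebra ${\rm Prim}\,\mathcal H_{{\rm FdB}}^{cop}(\mathbb K)^*$ and then using Milnor--Moore, is a complete and correct way to fill that gap. Your caveat about $N=1$ is also a genuine (if mild) correction to the statement as printed: a degree-preserving injection cannot exist because $\dim\mathcal H_{{\rm FdB}}^{2}(\mathbb K)=2$ (spanned by $t_2$ and $t_1^2$) while $\dim\mathbb K_2\langle\mathscr A_1\rangle=1$, and your explicit kernel element checks out, since $\mathcal P_2(\alpha_1,\alpha_2)=\alpha_2+2\alpha_1\alpha_1$ gives $(\rho_1)_\mathbb K^*(t_2)=2\alpha_1\alpha_1=(q_1)_\mathbb K(\alpha_1\shuffle\alpha_1)=(\rho_1)_\mathbb K^*(t_1^2)$. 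So the proposition should be read with $N\ge 2$, which is the range relevant to the center problem in any case (the Abel equation already corresponds to $N=2$).
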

Substituting $X_i=0$ for all $i>N$ in Theorem \ref{recur} we get the following recurrence relations (originally established in \cite{D} by a different method):
\begin{equation}\label{eq5.12}
\mathcal P_n^N(X_1,\dots,X_N)=\sum_{i=1}^{\min(n,N)}(n-i+1)\cdot \mathcal P_{n-i}^N(X_1,\dots, X_N)\cdot X_i,\qquad n\in\N.
\end{equation}

Let $\mathscr X_N\subset\mathscr X$ be the subsemigroup of sequences $a=(a_1,a_2,\dots)$ with $a_i=0$ for $i>N$. Each $a\in \mathscr X_N$ determines equation
\begin{equation}\label{eq5.13}
\frac{dv}{dx}=\sum_{i=1}^{N}a_{i}(x)v^{i+1},\qquad x\in I_{T}.
\end{equation}
In turn, map $E:\mathscr X\rightarrow L(\mathbb C\langle \mathscr A\rangle;\mathbb C)$, see \eqref{e2.16}, sends $\mathscr X_N$ into the group of characters $G_{\mathbb C\langle \mathscr A_N\rangle}(\mathbb C)$ of $\mathbb C\langle \mathscr A_N\rangle$. The subgroup $\widehat{\mathscr C}^N:=E({\mathscr X_N}\cap{\mathscr C})\subset\widehat{\mathscr C}$ is called the group of centers of equation \eqref{eq5.13}. Its closure in $G_{\mathbb C\langle \mathscr A_N\rangle}(\mathbb C)$ denoted by $\widehat{\mathscr C}_f^N$ is called the group of formal centers of this equation.
The Lie algebra of $\widehat{\mathscr C}_{f}^N$ is then the Lie algebra $\mathfrak g_{\mathbb C\langle\mathscr A_N\rangle/\mathcal J^N_\mathbb C}(\mathbb C)$  of infinitesimal characters of the quotient Hopf algebra
$\mathbb C\langle\mathscr A_N\rangle/\mathcal J^N_\mathbb C$, where $\mathcal J^N_\mathbb C$ is the Hopf ideal generated by the displacement polynomials $\mathcal P_i^N$, $i\in \N$. One easily checks that $\mathcal J^N_\mathbb C=(q_N)_\mathbb C (\mathcal J_\mathbb C)$
and that $\widehat{\mathscr C}_{f}^N$ is the group of characters of $\mathbb C\langle\mathscr A_N\rangle/\mathcal J^N_\mathbb C$.

Among other results, let us mention that group $G_{\mathbb C\langle\mathscr A\rangle}(\mathbb C)$ is isomorphic to the semidirect product of group $G_{\mathbb C\langle\mathscr A_2\rangle}(\mathbb C)$ (corresponding to the Abel differential equation) and a certain normal subgroup of $\widehat{\mathscr C}_f$ (see \cite[Prop.\,3.9]{Br2}) and that $G_{\mathbb C\langle\mathscr A_2\rangle}(\mathbb C)$ as a topological space is homeomorphic to the product of topological spaces $\widehat{\mathscr C}_{f}^{\,2}\times G_\mathbb C[[r]]$ (see \cite[Prop.\,3.12]{Br2}).

An important class of equations \eqref{e1} with finitely many terms is determined by the subsemigroup of rectangular paths
 $\mathscr X_{rect}\subset \mathscr X$ of elements $a\in \mathscr X$ whose first integrals 
$\widetilde a: I_{T}\rightarrow\mathbb C^{\infty}$ are paths consisting of segments each going in the direction of some particular coordinate. Every center determined by $a\in \mathscr X_{rect}$ is universal (i.e. $\mathscr C\cap\mathscr X_{rect}\subset\mathscr U$). This fact was established in \cite{Br3} by means of
the deep result of Cohen \cite{Co}. 
Note that $G(\mathscr X_{rect}):=E(\mathscr X_{rect})\subset G_{\mathbb C\langle\mathscr A\rangle}(\mathbb C)$ is a dense subgroup generated by elements $e^{c_{n}X_{n}t^{n}}$, $c_{n}\in\mathbb C$, $n\in\N$. (In particular, $G(\mathscr X_{rect})$ is isomorphic to the free product of countably many copies of $\mathbb C$.) Thus we have  $G(\mathscr X_{rect})\cap\widehat{\mathscr C}_f=\{I\}$.

\sect{Generalized Displacement Polynomials}
In this section we reveal the algebraic nature of the generalized displacement polynomials, prove recurrence relations for them and show that, when considered in the shuffle Hopf algebra $\mathbb K\langle\mathscr A\rangle$, their values in $t$ can be computed by means of the displacement polynomials $\mathcal P_i$. Also, we compute the values $\varepsilon_\mathbb K(\widetilde{\mathcal P}_i)$, $i\in\N$, of the augmentation homomorphism $\varepsilon_\mathbb K:\mathbb K\langle{\mathbf X}\rangle \rightarrow\mathbb K$.
\subsection{Recurrence Relations}
Let $\mathbb K[[z]]$ be the algebra of formal power series in variable $z$ with coefficients in the field of characteristic zero $\mathbb K$. By $D,\ \!L:\mathbb K[[z]]\rightarrow\mathbb K[[z]]$ we denote the differentiation and the left translation operators defined on $f(z)=\sum_{k=0}^{\infty}c_{k}z^{k}$ by
\begin{equation}\label{e6.1}
(Df)(z):=\sum_{k=0}^{\infty}(k+1)c_{k+1}z^{k}\qquad {\rm and}\qquad (Lf)(z):=\sum_{k=0}^{\infty}c_{k+1}z^{k}.
\end{equation}
Let ${\mathcal A}_\mathbb K(D,L)$ be the associative algebra with unit $I:={\rm Id}|_{\mathbb K[[z]]}$ of polynomials with coefficients in $\mathbb  K$ in variables $I$, $D$ and $L$. We define
$[A,B]:=AB-BA$ for $A,B\in  {\mathcal A}_\mathbb K(D,L)$.
Then we have (see, e.g., \cite[Lm.\,3.2]{Br2})
\[
[DL^{i},DL^{j}]=(i-j)DL^{i+j+1}\qquad i,j\in\mathbb Z_+.
\]
In particular, the vector space $\mathfrak L_\mathbb K\subset {\mathcal A}_\mathbb K(D,L)$ over $\mathbb K$ generated by $DL^i$, $i\in\mathbb Z_+$, and equipped with the above bracket is a Lie algebra.
It is easily seen that the linear map $\sigma_\mathbb K:  {\rm Prim}\,\mathcal H_{{\rm FdB}}^{cop}(\mathbb K)^*\rightarrow\mathfrak L_\mathbb K$ defined on the generators by $\sigma(t_i'):=DL^{i-1}$, $i\in\N$, is an isomorphism of Lie algebras. By the universal property of enveloping algebras it extends to the morphism of associative  algebras $\sigma_\mathbb K:\mathcal H_{{\rm FdB}}^{cop}(\mathbb K)^*\rightarrow {\mathcal A}_\mathbb K(D,L)$. 
 The following result exposes the algebraic nature of the generalized displacement polynomials.
\begin{Proposition}\label{prop6.1}
Element 
\[
(\sigma_\mathbb K\otimes {\rm Id})(\mathscr P_\mathbb K):=(1\otimes I-\sum_{j=1}^\infty DL^{j-1}\otimes X_j\, t^j)^{-1}\in {\mathcal A}_\mathbb K(D,L)\otimes L(\mathbb K\langle\mathscr A\rangle;\mathbb K),
\]
see \eqref{gf}, satisfies
\begin{equation}\label{eq6.2}
(\sigma_\mathbb K\otimes {\rm Id})(\mathscr P_\mathbb K)(z^m)=\sum_{i=0}^m \widetilde{\mathcal P}_i(X_1,\dots, X_i;m)\cdot z^{m-i}\cdot t^i\quad {\rm for\ all}\quad m\in\mathbb Z_+.
\end{equation}
\end{Proposition}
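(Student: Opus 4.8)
The plan is to expand the inverse in \eqref{eq6.2} as a geometric series and thereby reduce the whole statement to one elementary computation: the action of a product of the operators $DL^{j-1}$ on a monomial $z^m$. Starting from the expansion of $\mathscr P_\mathbb K$ recorded in the proof of Theorem \ref{recur}, and using that $\sigma_\mathbb K$ is a morphism of associative algebras carrying the convolution product $*$ to operator composition and $t_j'$ to $DL^{j-1}$, I would write
\[
(\sigma_\mathbb K\otimes{\rm Id})(\mathscr P_\mathbb K)=1\otimes I+\sum_{i=1}^\infty\left(\sum_{i_1+\cdots+i_k=i}DL^{i_k-1}\cdots DL^{i_1-1}\otimes X_{i_k}\cdots X_{i_1}\right)t^i.
\]
Since the second tensor factor is untouched by evaluation at $z^m$, it then suffices to compute $(DL^{i_k-1}\cdots DL^{i_1-1})(z^m)$ for each composition $i_1+\cdots+i_k=i$.

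The key single-step computation comes straight from \eqref{e6.1}: one has $L^{j-1}(z^p)=z^{p-j+1}$ for $p\ge j-1$ and $L^{j-1}(z^p)=0$ otherwise, whence
\[
DL^{j-1}(z^p)=(p-j+1)\,z^{p-j}\ \ \text{for }p\ge j,\qquad DL^{j-1}(z^p)=0\ \ \text{for }p\le j-1.
\]
Applying the operators in the order dictated above (the rightmost factor $DL^{i_1-1}$ acting first) to $z^m$, the exponent descends through the partial sums $m-s_\ell$, where $s_\ell:=i_1+\cdots+i_\ell$, and each application of $DL^{i_\ell-1}$ contributes the scalar $(m-s_\ell+1)$. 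When $i=s_k\le m$ every intermediate exponent $m-s_{\ell-1}$ satisfies $m-s_{\ell-1}\ge i_\ell$, so the clean formula applies at each stage, no over-shift occurs, and the product of scalars telescopes to
\[
(m-i_1+1)(m-i_1-i_2+1)\cdots(m-i+1)=p_{i_1,\dots,i_k}(m),
\]
giving $(DL^{i_k-1}\cdots DL^{i_1-1})(z^m)=p_{i_1,\dots,i_k}(m)\,z^{m-i}$, in agreement with \eqref{e5}.

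The step I expect to require the most care is the vanishing for $i>m$, which is precisely what forces the sum in \eqref{eq6.2} to terminate at $i=m$ (so that only nonnegative powers $z^{m-i}$ appear). Here I would note that for any composition with $s_k=i>m$ there is a unique index $\ell^*$ with $s_{\ell^*-1}\le m<s_{\ell^*}$; at that stage $DL^{i_{\ell^*}-1}$ is applied to $z^{m-s_{\ell^*-1}}$ with $m-s_{\ell^*-1}\le i_{\ell^*}-1$, so the operator annihilates the monomial (either through the over-shift of $L$ or, in the boundary case $m-s_{\ell^*-1}=i_{\ell^*}-1$, through $D$ killing the resulting constant), and the remaining operators act on $0$. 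Hence only the terms $0\le i\le m$ survive. Collecting these and recognizing the inner sum $\sum_{i_1+\cdots+i_k=i}p_{i_1,\dots,i_k}(m)\,X_{i_k}\cdots X_{i_1}$ as $\widetilde{\mathcal P}_i(X_1,\dots,X_i;m)$ by definition \eqref{e8} (with the $i=0$ term $1\otimes I$ contributing $z^m\,I$) yields \eqref{eq6.2}.
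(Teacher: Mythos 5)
Your proposal is correct and follows essentially the same route as the paper: expand $\mathscr P_\mathbb K$ as a geometric series, push it through $\sigma_\mathbb K\otimes{\rm Id}$ to get sums of $DL^{i_k-1}\cdots DL^{i_1-1}\otimes X_{i_k}\cdots X_{i_1}$, and evaluate on $z^m$ to produce the factors $p_{i_1,\dots,i_k}(m)\,z^{m-i}$, with the sum truncating at $i=m$. The only difference is that you spell out the single-step identity $DL^{j-1}(z^p)=(p-j+1)z^{p-j}$ and the annihilation argument for compositions with $i>m$, details the paper's computation leaves implicit.
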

\begin{proof}
By definition,
\[
\begin{array}{l}
\displaystyle
(\sigma_\mathbb K\otimes {\rm Id})(\mathscr P_\mathbb K)(z^m)=I(z^m)+\sum_{i=1}^\infty\left(\sum_{j=1}^\infty DL^{j-1}\otimes X_j\, t^j\right)^i(z^m)\medskip\\
\displaystyle =z^m+\sum_{i=1}^\infty\left(\sum_{i_1+\cdots+i_k=i}(DL^{i_k-1}\cdots  DL^{i_1-1})(z^m)\otimes X_{i_k}\cdots  X_{i_1}\right) t^i \medskip\\
\displaystyle =z^m+\sum_{i=1}^m\left(\sum_{i_1+\cdots+i_k=i}(m-i_1+1)(m-i_1-i_2+1)\cdots (m-i+1)\, z^{m-i}\,X_{i_k}\cdots  X_{i_1}\right) t^i\medskip\\
\displaystyle :=\sum_{i=0}^m \widetilde{\mathcal P}_i(X_1,\dots, X_i;m)\cdot z^{m-i}\cdot t^i.
\end{array}
\]
\end{proof}
Using this proposition we establish two basic recurrence relations for the generalized displacement polynomials.
\begin{Th}\label{te6.2}
For all $n\in\N$, $t\in\mathbb C$,
\begin{equation}\label{e6.3}
\widetilde{\mathcal P}_n(X_1,\dots, X_n;t)=\sum_{j=1}^n \widetilde{\mathcal P}_{n-j}(X_1,\dots, X_{n-j}; t-j)\cdot X_j
\end{equation}
and
\begin{equation}\label{e6.4}
\widetilde{\mathcal P}_n(X_1,\dots, X_n;t)=(t-n+1)\cdot\sum_{j=1}^n X_j\cdot \widetilde{\mathcal P}_{n-j}(X_1,\dots, X_{n-j};t).
\end{equation}
\end{Th}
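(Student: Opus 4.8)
The plan is to extract both recurrences from the operator incarnation of the generating function $\mathscr P_\mathbb K$ furnished by Proposition \ref{prop6.1}, exactly as Theorem \ref{recur} was obtained from \eqref{gf}, but now evaluating on the powers $z^m$ rather than on the generators $t_n$. Write $\Phi:=(\sigma_\mathbb K\otimes{\rm Id})(\mathscr P_\mathbb K)$ and $W:=\sum_{j=1}^\infty DL^{j-1}\otimes X_j\,t^j$, so that $\Phi=(1\otimes I-W)^{-1}$ and hence $\Phi$ obeys the two resolvent identities $\Phi=1\otimes I+\Phi\,W$ and $\Phi=1\otimes I+W\,\Phi$. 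The only facts about the operators that I need are the elementary $DL^{j-1}(z^p)=(p-j+1)\,z^{p-j}$ and the Proposition \ref{prop6.1} expansion $\Phi(z^m)=\sum_{i=0}^m z^{m-i}\otimes\widetilde{\mathcal P}_i(X_1,\dots,X_i;m)\,t^i$; the second tensor factor lives in $L(\mathbb K\langle\mathscr A\rangle;\mathbb K)$ with its concatenation product, which is what forces the new letters $X_j$ to pile up on a fixed side.

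First I would apply $\Phi=1\otimes I+\Phi\,W$ to $z^m$. Here $W$ acts \emph{first}, so each summand sends $z^m$ to $(m-j+1)\,z^{m-j}$, appends $X_j$ on the right, and then feeds $z^{m-j}$ into $\Phi$, whose expansion now produces $\widetilde{\mathcal P}_{n-j}(\dots;m-j)$ with the evaluation point shifted from $m$ to $m-j$. Reading off the coefficient of $z^{m-n}\otimes(\,\cdot\,)\,t^n$ (so that $i+j=n$) gives the right-multiplication recurrence \eqref{e6.3} for every $t=m\in\mathbb Z_+$, each term weighted by the scalar $(m-j+1)$ produced by the single application of $D$. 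Dually, applying $\Phi=1\otimes I+W\,\Phi$ to $z^m$ makes $W$ act \emph{last}, on $z^{m-i}$; now $DL^{j-1}$ contributes the common scalar $(m-i-j+1)=(m-n+1)$ once $i+j=n$, deposits $X_j$ on the left, and leaves the evaluation point unshifted. Comparing coefficients of $z^{m-n}t^n$ then yields \eqref{e6.4} with its global factor $(t-n+1)$, again for all $t=m\in\mathbb Z_+$.

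Finally I would upgrade these identities from $\mathbb Z_+$ to all of $\mathbb C$: for each fixed $n$ both sides of \eqref{e6.3} and of \eqref{e6.4} are polynomials in $t$ with coefficients in $\mathbb K\langle\mathbf X\rangle$ (the scalars $p_{i_1,\dots,i_k}(t)$ being polynomials), and two such polynomials that agree at every nonnegative integer agree identically. As an internal check, setting $t=n$ in the right-multiplication recurrence collapses it, via $\mathcal P_k=\widetilde{\mathcal P}_k(\,\cdot\,;k)$, to Devlin's relation \eqref{devlin}. I expect the only real friction to be the noncommutative bookkeeping — tracking that $\Phi W$ deposits the fresh letter on the right and shifts the second argument by $j$, whereas $W\Phi$ deposits it on the left and produces a single global factor $(m-n+1)$. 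All of this is governed by the two scalar factorizations $p_{j,i_2,\dots,i_k}(t)=(t-j+1)\,p_{i_2,\dots,i_k}(t-j)$ and $p_{i_1,\dots,i_{k-1},j}(t)=(t-n+1)\,p_{i_1,\dots,i_{k-1}}(t)$, so an alternative, purely combinatorial proof is available by grouping the defining sum for $\widetilde{\mathcal P}_n$ according to its first part, respectively its last part.
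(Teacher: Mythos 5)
Your proof is correct and follows essentially the same route as the paper's own: both relations are extracted from the two resolvent presentations of $(\sigma_\mathbb K\otimes{\rm Id})(\mathscr P_\mathbb K)$ applied to the basis $\{z^m\}$ via Proposition \ref{prop6.1}, with the $X_j$ deposited on the right (evaluation point shifted) or on the left (global factor $(m-n+1)$), and then extended from $t=m\in\mathbb Z_+$ to all $t\in\mathbb C$ by polynomiality. One remark: what you actually derive — and what the paper's own proof derives as well — is the first recurrence with each term weighted by $(t-j+1)$, i.e. $\widetilde{\mathcal P}_n(\cdot\,;t)=\sum_{j=1}^n(t-j+1)\,\widetilde{\mathcal P}_{n-j}(\cdot\,;t-j)\cdot X_j$; the displayed formula \eqref{e6.3} in the theorem statement omits this factor (evidently a typo, as the case $n=1$, where $\widetilde{\mathcal P}_1(X_1;t)=t\,X_1$, and your own consistency check against \eqref{devlin} at $t=n$ both confirm), so the weighted version you prove is the correct one.
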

\begin{proof}
We set for brevity $\widetilde{\mathscr P}:=(\sigma_\mathbb C\otimes {\rm Id})(\mathscr P_\mathbb C)$.
Applying to the identity
\[
\mathscr P_\mathbb C=1\otimes I+\mathscr P_\mathbb C\times\sum_{j=1}^\infty t_j'\otimes X_j\,t^j
\]
operator $\sigma_\mathbb C\otimes {\rm Id}$ and using Proposition \ref{prop6.1} we obtain for all $m\in\mathbb Z_+$ (below $\times$ stands for the product on ${\mathcal A}_\mathbb C(D,L)\otimes L(\mathbb C\langle\mathscr A\rangle;\mathbb C)$),
\[
\begin{array}{l}
\displaystyle
\sum_{i=0}^m \widetilde{\mathcal P}_i(X_1,\dots, X_i;m)\cdot z^{m-i}\cdot t^i=\widetilde{\mathscr P}(z^m)=\left(1\otimes I+\widetilde{\mathscr P}\times\sum_{j=1}^\infty DL^{j-1}\otimes X_j\,t^j\right)(z^m)\medskip
\\
\displaystyle =z^m+\sum_{j=1}^m\widetilde{\mathscr P}(z^{m-j})\cdot (m-j+1)\cdot X_j\, t^j\medskip
\\
\displaystyle =z^m+\sum_{j=1}^m (m-j+1)
\left(\sum_{k=0}^{m-j} \widetilde{\mathcal P}_{k}(X_1,\dots, X_{k};m-j)\, z^{m-j-k}\cdot r^k\right)\cdot X_j\, t^j\medskip
\\
\displaystyle =z^m+\sum_{i=1}^m\left(\sum_{j=1}^{i}  \widetilde{\mathcal P}_{i-j}(X_1,\dots, X_{i-j};m-j)\cdot X_j \right)  z^{m-i}\cdot t^i.
\end{array}
\]
Choosing here $m\ge n$ and equating coefficients of $t^n$ in both sides of the above equation we get
\[
\widetilde{\mathcal P}_n(X_1,\dots, X_n;m)=\sum_{j=1}^n (m-j+1)\cdot \widetilde{\mathcal P}_{n-j}(X_1,\dots, X_{n-j};m-j)\cdot X_j.
\]
Since coefficients of $\widetilde{\mathcal P}_1,\dots, \widetilde{\mathcal P}_n$ are polynomials in $m$ and the above identity is valid for all sufficiently large natural $m$, it is valid for all complex numbers $m$ as well. This completes the proof of \eqref{e6.3}.

To prove the second relation we use a different presentation of $\mathscr P_\mathbb C$:
\[
\mathscr P_\mathbb C=1\otimes I+\left(\sum_{j=1}^\infty t_j'\otimes X_j\,t^j \right)\times\mathscr P_\mathbb C.
\]
As before, applying to this equation operator $\sigma_\mathbb C\otimes {\rm Id}$ and using Proposition \ref{prop6.1} we obtain for all $m\in\mathbb Z_+$,
\[
\begin{array}{l}
\displaystyle \sum_{i=0}^m \widetilde{\mathcal P}_i(X_1,\dots, X_i;m)\cdot z^{m-i}\cdot t^i=\widetilde{\mathscr P}(z^m)=\left(1\otimes I+\left(\sum_{j=1}^\infty DL^{j-1}\otimes X_j\,t^j\right)\times \widetilde{\mathscr P}\right)(z^m)\medskip
\\
\displaystyle=z^m+\sum_{j=1}^\infty X_j\cdot DL^{j-1}(\widetilde{\mathscr P}(z^m))\, t^j\medskip\\
\displaystyle =z^m+\sum_{j=1}^\infty X_j\cdot\left( \sum_{k=0}^m DL^{j-1}(z^{m-k})\cdot \widetilde{\mathcal P}_k(X_1,\dots, X_k; m)\, t^k\right) t^{j}
\medskip\\
\displaystyle = z^m+\sum_{j=1}^{m} X_j\cdot\left( \sum_{k=0}^{m-j} (m-j-k+1)\cdot \widetilde{\mathcal P}_k(X_1,\dots, X_k; m)\, z^{m-j-k}\cdot t^k\right) t^{j}
\medskip\\
\displaystyle =z^m+\sum_{i=1}^m (m-i+1)\left(\sum_{j=1}^i X_j\cdot \widetilde{\mathcal P}_{i-j}(X_1,\dots, X_{i-j};m)\right) z^{m-i}\cdot t^i.
\end{array}
\]
Choosing here $m\ge n$ and equating coefficients of $t^n$ in both sides of the above equation we get
\[
\widetilde{\mathcal P}_n(X_1,\dots, X_n;m)=(m-n+1)\cdot\sum_{j=1}^n X_j\cdot \widetilde{\mathcal P}_{n-j}(X_1,\dots, X_{n-j};m).
\]
Again, since coefficients of $\widetilde{\mathcal P}_1,\dots, \widetilde{\mathcal P}_n$ are polynomials in $m$ and the above identity is valid for all sufficiently large natural $m$, it is valid for all complex numbers $m$ as well. This completes the proof of \eqref{e6.4} and of the theorem.
\end{proof}
\subsection{Generalized Displacement Polynomials in the Shuffle Hopf Algebra}
In this section we study polynomials $\widetilde {\mathcal P}_i(\alpha_1,\dots,\alpha_i;t)\in (\mathbb K\langle\mathscr A\rangle,\shuffle)$, $i\in \mathbb Z_+$, where $\widetilde{\mathcal P}_0:=1$ (as before, the product of words in $\mathscr A^*$ is defined by concatenation).  
\subsubsection{{\bf Expression via the Bell Polynomials}}
The main result of this section shows that the generalized displacement polynomials belong to the image of the natural monomorphism $\rho_\mathbb K^*:\mathcal H_{{\rm FdB}}^{cop}(\mathbb K)\rightarrow\mathbb K\langle \mathscr A\rangle$, see Section~5.1.1. To formulate the result, we introduce
polynomials $\mathcal B_k\in\mathbb Q[t_1,\dots, t_k,t]$, $k\in\N$, 
\begin{equation}\label{genbell1}
\mathcal B_k(t_1,\dots, t_k,t):=  \sum_{l_1+2l_2+\cdots +kl_k=k}\left(\prod_{l=1-l_1-\dots -l_k}^{0} (t+l)\right)\cdot\frac{t_1^{l_1}\cdots t_k^{l_k}}{l_1!\cdots l_k!}.
\end{equation}
Also, we set $\mathcal B_0:=1$.
\begin{Proposition}\label{prop6.3}
For all $i,j\in\mathbb Z_+$, $i-j\ge 1$ polynomials \eqref{genbell1} satisfy
\begin{equation}\label{genbell2}
\mathcal B_{i-j}(t_1,\dots, t_{i-j},j+1)=\frac{(j+1)!}{(i+1)!}B_{i+1,j+1}(1,2!t_1,3!t_2,\dots, (i-j+1)! t_{i-j}).
\end{equation}
\end{Proposition}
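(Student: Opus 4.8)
The plan is to bypass the Bell polynomial entirely and match $\mathcal B_{i-j}$ against the explicit multinomial form of the coproduct bracket that is already recorded in the excerpt. Immediately after \eqref{e2.18} it is stated that
\[
\frac{(j+1)!}{(i+1)!}B_{i+1,j+1}\bigl(1,2!\,t_1,\dots,(i-j+1)!\,t_{i-j}\bigr)
=\sum_{\substack{l_0+l_1+\cdots+l_i=j+1 \\ l_1+2l_2+\cdots+il_i=i-j}}\frac{(j+1)!}{l_0!\,l_1!\cdots l_i!}\,t_1^{l_1}\cdots t_i^{l_i}.
\]
Thus the whole task reduces to identifying this right-hand side with $\mathcal B_{i-j}(t_1,\dots,t_{i-j},j+1)$ as defined in \eqref{genbell1}; the Bell polynomial then never needs to be touched directly.

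First I would set $k:=i-j$ and trim the index set. Since every $l_m\ge 0$ and the weight constraint reads $l_1+2l_2+\cdots+il_i=k$, any $l_m$ with $m>k$ must vanish (otherwise $m\,l_m\ge m>k$), so the sum in fact involves only $t_1,\dots,t_k$ and is indexed precisely by the compositions $l_1+2l_2+\cdots+kl_k=k$ occurring in \eqref{genbell1}. Next I would eliminate $l_0$: writing $L:=l_1+\cdots+l_k$ for the total number of parts, the multiplicity constraint forces $l_0=(j+1)-L$, whence the coefficient $(j+1)!/l_0!$ becomes $(j+1)!/(j+1-L)!$, and the surviving denominators $l_1!\cdots l_k!$ exactly match those in \eqref{genbell1}.

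The combinatorial heart of the argument is then to recognize $(j+1)!/(j+1-L)!$ as the product $\prod_{l=1-L}^{0}(t+l)$ appearing in \eqref{genbell1} evaluated at $t=j+1$. That product has exactly $L$ factors $(j+2-L)(j+3-L)\cdots(j+1)$, whose value is $(j+1)!/(j+1-L)!$; substituting this back reproduces \eqref{genbell1} term by term and yields \eqref{genbell2}. The one point I expect to require care—the main obstacle—is the boundary regime $L>j+1$, where $l_0=(j+1)-L<0$ and $l_0!$ is undefined, so such tuples simply do not contribute on the right-hand side of \eqref{e2.18}. I would verify that the matching terms of $\mathcal B_k$ vanish on their own: for $L\ge j+2$ the value $l=-(j+1)$ lies in the range $[\,1-L,0\,]$, so the factor $(j+1+l)=0$ annihilates the product. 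Both sides therefore discard exactly the same tuples, making the identification valid including the boundary cases and completing the proof.
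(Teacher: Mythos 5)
Your proof is correct and takes essentially the same route as the paper's: both arguments identify the falling-factorial product $\prod_{l=1-L}^{0}(j+1+l)$ with $(j+1)!/l_0!$ for $l_0=j+1-L$ and match the resulting multinomial sum against the expansion of $\tfrac{(j+1)!}{(i+1)!}B_{i+1,j+1}\bigl(1,2!\,t_1,\dots,(i-j+1)!\,t_{i-j}\bigr)$ recorded after \eqref{e2.18}, with your version merely reading the chain of equalities in the opposite direction. Your explicit verification that tuples with $l_1+\cdots+l_{i-j}>j+1$ contribute zero on both sides (a zero factor in the product on one side, an undefined $l_0!$ excluding them on the other) is a boundary case the paper's proof passes over silently, and it is a worthwhile addition.
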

\begin{proof}
From \eqref{genbell1} with $k:=i-j$, $t=j+1$ we get, cf. \eqref{e2.19},
\[
\begin{array}{c}
\displaystyle
\mathcal B_{i-j}(t_1,\dots, t_{i-j},j+1)=\sum_{l_1+2l_2+\cdots +(i-j)l_{i-j}=i-j}\left(\prod_{l=1-l_1-\dots -l_{i-j}}^{0} (j+1+l)\right)\cdot\frac{t_1^{l_1}\cdots t_{i-j}^{l_{i-j}}}{l_1!\cdots l_{i-j}!}\medskip\\
\displaystyle \qquad \ =\sum_{\stackrel{l_0+l_1+\cdots +l_{i-j}=j+1}{_{l_1+2l_2+\cdots +(i-j)l_{i-j}=i-j}}}\frac{(j+1)!}{l_0!\, l_1!\cdots l_{i-j}!}t_1^{l_1}\cdots t_{i-j}^{l_{i-j}}\medskip\\
\displaystyle \qquad\qquad\quad =
\frac{(j+1)!}{(i+1)!}B_{i+1,j+1}(1,2!t_1,3!t_2,\dots, (i-j+1)! t_{i-j}),
\end{array}
\]
as required.
\end{proof}
We are ready to formulate the main result of this section. 
\begin{Th}\label{gpshuffle}
The following identities hold in  $(\mathbb K\langle\mathscr A\rangle,\shuffle)$ for all $i\in\Z_+$, $t\in\mathbb K$:
\begin{equation}\label{equ6.5}
\widetilde{\mathcal P}_i(\alpha_1,\dots,\alpha_i;t)=\mathcal B_i(\mathcal P_1(\alpha_1),\dots, \mathcal P_{i}(\alpha_1,\dots,\alpha_i), t-i+1).
\end{equation}
\end{Th}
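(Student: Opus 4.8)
The plan is to factor identity \eqref{equ6.5} through the monomorphism $\rho_\mathbb K^*$ and then dualize. Since $\rho_\mathbb K^*$ is an algebra morphism and $\mathcal P_j=\rho_\mathbb K^*(t_j)$ (Theorem \ref{te5.1}), every shuffle product on the right-hand side of \eqref{equ6.5} collapses into a single application of $\rho_\mathbb K^*$, so that $\mathcal B_i(\mathcal P_1,\dots,\mathcal P_i,t-i+1)=\rho_\mathbb K^*\bigl(\mathcal B_i(t_1,\dots,t_i,t-i+1)\bigr)$. Hence it suffices to prove $\widetilde{\mathcal P}_i(\alpha_1,\dots,\alpha_i;t)=\rho_\mathbb K^*\bigl(\mathcal B_i(t_1,\dots,t_i,t-i+1)\bigr)$ in $\mathbb K\langle\mathscr A\rangle$. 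As the pairing between the monomial basis $\mathbf X^*$ of $\mathbb K\langle\mathbf X\rangle$ and the words of $\mathscr A^*$ is nondegenerate, I would test this against every monomial $\bar X=X_{i_k}\cdots X_{i_1}$. Exactly as in the proof of Theorem \ref{te5.1}, $\bar X(\rho_\mathbb K^*(h))=\rho_\mathbb K(\bar X)(h)=(t'_{i_k}*\cdots*t'_{i_1})(h)$, whereas $\bar X$ pairs with $\widetilde{\mathcal P}_i(\alpha_1,\dots,\alpha_i;t)$ to give the coefficient $p_{i_1,\dots,i_k}(t)$ when $i_1+\cdots+i_k=i$ (and $0$ otherwise, both sides being homogeneous of degree $i$). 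Thus everything reduces to the key identity
\begin{equation*}
(t'_{i_k}*\cdots*t'_{i_1})\bigl(\mathcal B_i(t_1,\dots,t_i,t-i+1)\bigr)=p_{i_1,\dots,i_k}(t),\qquad i_1+\cdots+i_k=i. \tag{$\star$}
\end{equation*}
Note that $(\star)$ is the precise analogue of the computation in Theorem \ref{te5.1}, with the generator $t_i$ (which there yields $p_{i_1,\dots,i_k}(i)$) replaced by $\mathcal B_i(t_1,\dots,t_i,t-i+1)$ and the special value $i$ by the free parameter $t$; consistently, one checks $\mathcal B_i(t_1,\dots,t_i,1)=t_i$, so $(\star)$ at $t=i$ recovers Theorem \ref{te5.1}.

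I would prove $(\star)$ by induction on $k$, peeling off the rightmost factor $t'_{i_1}$. The convenient device is the operator $\partial_j:=(\mathrm{id}\otimes t_j')\circ\Delta_{{\rm FdB}}^{op}$ on $\mathcal H_{{\rm FdB}}^{cop}(\mathbb K)$. Because $t_j'$ is an infinitesimal character, the counit axiom makes $\partial_j$ a \emph{derivation}, and from \eqref{e2.18} together with \eqref{eq4.4} one reads off its action on generators, $\partial_j(t_n)=(n-j+1)\,t_{n-j}$ for $n\ge j$ (with $t_0:=1$) and $\partial_j(t_n)=0$ for $n<j$ --- this is precisely the dual of the rule $\sigma_\mathbb K(t_j')=DL^{j-1}$. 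The convolution identity $(t'_{i_k}*\cdots*t'_{i_1})(h)=(t'_{i_k}*\cdots*t'_{i_2})(\partial_{i_1}h)$ then strips one letter off the left-hand side of $(\star)$, reducing it to an evaluation of $\partial_{i_1}$ on $\mathcal B_i$.

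The heart of the argument is therefore the derivation formula $\partial_{j}\bigl(\mathcal B_N(t_1,\dots,t_N,c)\bigr)=(c+N-j)\,\mathcal B_{N-j}(t_1,\dots,t_{N-j},c)$, which I would obtain from the generating function
\begin{equation*}
\sum_{k\ge 0}\mathcal B_k(t_1,\dots,t_k,c)\,x^k=\Bigl(1+\sum_{l\ge 1}t_l\,x^l\Bigr)^{c}.
\end{equation*}
This follows from definition \eqref{genbell1} once the bracketed product is recognized as the falling factorial $c(c-1)\cdots(c-L+1)$ with $L=l_1+\cdots+l_k$ and the sum is resummed by the multinomial/exponential formula (its specialization $c=j+1$ is Proposition \ref{prop6.3}). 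Writing $A:=1+\sum_l t_l x^l$ and using the derivation property together with $\partial_j(A)=x^j\frac{d}{dx}(xA)$, one gets $\partial_j(A^c)=cA^{c-1}x^j\frac{d}{dx}(xA)=x^j\bigl(cA^c+x\frac{d}{dx}A^c\bigr)=x^j\sum_{m\ge0}(c+m)\mathcal B_m(t_1,\dots,t_m,c)\,x^m$, and reading the coefficient of $x^N$ gives the claim. Feeding this into the reduction with $c=t-i+1$ (so $c+i-i_1=t-i_1+1$) and invoking the inductive hypothesis on the $k-1$ letters $i_2,\dots,i_k$ (total degree $i-i_1$, argument $c$, hence value $p_{i_2,\dots,i_k}(t-i_1)$), the recursion $p_{i_1,\dots,i_k}(t)=(t-i_1+1)\,p_{i_2,\dots,i_k}(t-i_1)$ read off from \eqref{e5} closes the induction; the base case $k=1$ is $t'_{i_1}\bigl(\mathcal B_{i_1}(\dots,t-i_1+1)\bigr)=t-i_1+1=p_{i_1}(t)$. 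The main obstacle is exactly this derivation formula for $\partial_j$ on the $\mathcal B_N$ (equivalently, pinning down how products of the primitives $t_j'$ act on the Bell-type polynomials); everything else is bookkeeping with the duality and with the multiplicative recursion for $p_{i_1,\dots,i_k}$.
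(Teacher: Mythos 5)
Your proof is correct, and it takes a genuinely different route from the paper's. The paper argues analytically: it transfers the generating function $\mathscr P_{\mathbb C}$ to the operator algebra $\mathcal A_{\mathbb C}(D,L)$ acting on $\mathbb C[z]$, computes the coefficients $I_i(a)(z^m)$ of the fundamental solution $F_a$ in two ways (Proposition \ref{prop6.4}) --- once via the iterated integrals $I_{i_1,\dots,i_k}(a)$, and once via Bell polynomials in the Taylor coefficients of the first return map, using the composition identity $F_a(T;t)(s(z,r))=s(z,\mathcal P(a)(rt)/t)$ from \cite{Br6} together with the Fa\`{a} di Bruno formula --- and then pulls the resulting identity back to $(\mathbb K\langle\mathscr A\rangle,\shuffle)$ through the isomorphism $\mathcal I_{\mathbb C}(\mathscr X)\cong(\mathbb C\langle\mathscr A\rangle,\shuffle)$, $I_{i_1,\dots,i_k}\mapsto\alpha_{i_k}\dots\alpha_{i_1}$, and Proposition \ref{prop6.3}; this yields \eqref{equ6.5} for integer $t\ge i$, and polynomiality in $t$ finishes the argument. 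You instead stay entirely inside the Hopf-algebraic framework: writing the right-hand side as $\rho_{\mathbb K}^*\bigl(\mathcal B_i(t_1,\dots,t_i,t-i+1)\bigr)$ and pairing against the monomial basis of $\mathbb K\langle\mathbf X\rangle$, you reduce \eqref{equ6.5} to the convolution identity $(t'_{i_k}*\cdots*t'_{i_1})\bigl(\mathcal B_i(t_1,\dots,t_i,t-i+1)\bigr)=p_{i_1,\dots,i_k}(t)$, proved by induction on $k$ via the derivation $\partial_j=({\rm id}\otimes t_j')\circ\Delta_{\rm FdB}^{op}$ and the generating function $\sum_{k\ge 0}\mathcal B_k(t_1,\dots,t_k,c)\,x^k=\bigl(1+\sum_{l\ge 1}t_l\,x^l\bigr)^c$. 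I verified the key steps: $\partial_j(t_n)=(n-j+1)\,t_{n-j}$ does follow from \eqref{e2.18} and \eqref{eq4.4}; the generating function is the multinomial resummation of \eqref{genbell1} (the bracketed product there being the falling factorial); the derivation formula $\partial_j\bigl(\mathcal B_N(\cdot,c)\bigr)=(c+N-j)\,\mathcal B_{N-j}(\cdot,c)$ follows from $\partial_j(A)=x^j\frac{d}{dx}(xA)$ exactly as you indicate; and the recursion $p_{i_1,\dots,i_k}(t)=(t-i_1+1)\,p_{i_2,\dots,i_k}(t-i_1)$, read off \eqref{e5}, closes the induction. Your route is more elementary and self-contained: it needs no ODE input, no external composition formula from \cite{Br6}, no Fa\`{a} di Bruno formula for composition of power series, and no extension argument from integer $t$ to arbitrary $t$ (the parameter stays free throughout), and it works verbatim over any field $\mathbb K$ of characteristic zero rather than first over $\mathbb C$ with a rationality remark. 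What the paper's longer route buys is Proposition \ref{prop6.4} itself --- the interpretation of the values of the generalized displacement polynomials at $t\in\N$ as matrix entries of the faithful representation of $(G_{\mathbb C}[[r]],\circ)$ composed with the first return map --- which is an independent goal of Section 6 (cf. Remark \ref{rem6.6}) and which your argument does not recover.
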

In particular, 
\begin{equation}\label{equ6.6}
\widetilde {\mathcal P}_i(\alpha_1,\dots,\alpha_i;t)\in \rho_\mathbb K^*\bigl(\mathcal H_{{\rm FdB}}^{cop}(\mathbb K)\bigr)\cap\mathbb K_i\langle\mathscr A\rangle\quad {\rm for\ all}\quad t\in\mathbb K,\ i\in\mathbb Z_+.
\end{equation}
\begin{proof}
Let us consider algebra ${\mathcal A}_\mathbb C(D,L)$ with the grading defined on the generators by ${\rm deg}(D)={\rm deg}(L)=1$, ${\rm deg}(I)=0$. The closure of ${\mathcal A}_\mathbb C(D,L)$ in the adic topology induced by the grading is naturally identified with the subalgebra $\mathcal A_*$ of the algebra ${\mathcal A}_\mathbb C(D,L)[[t]]$ of formal power series in $t$ with coefficients in ${\mathcal A}_\mathbb C(D,L)$ of series of the form
\[
f(t)=\sum_{i=0}^\infty f_i\, t^i,\qquad f_i\in\mathcal A_\mathbb C(D,L),\quad {\rm deg}(f_i)=i,\quad i\in\mathbb Z_+.
\]
Map $\sigma_\mathbb C$ (see Section~6.1) extends by continuity to the morphism of topological algebras $L(\mathcal H_{{\rm FdB}}^{cop}(\mathbb C);\mathbb C)\rightarrow \mathcal A_*$ (denoted by $\sigma_\mathbb C$ as well) which maps the Lie algebra $\mathfrak g_{\mathcal H_{{\rm FdB}}^{cop}(\mathbb C)}(\mathbb C)$ of infinitesimal characters of $\mathcal H_{{\rm FdB}}^{cop}(\mathbb C)$ isomorphically onto the closure ${\rm cl}(\mathfrak L_\mathbb C)$ of the Lie algebra $\mathfrak L_\mathbb C={\rm span}_\mathbb C\{DL^{i-1},  i\in\N\}$ in $\mathcal A_*$. The latter consists of series of the form
\[
g(t)=\sum_{i=1}^\infty d_i\, DL^{i-1}\, t^i,\qquad d_i\in\mathbb C,\  i\in\N,
\]
with the bracket extending by continuity the bracket on $\mathfrak L_\mathbb C$. In particular, $\sigma_\mathbb C$ maps the group $G_{\mathcal H_{{\rm FdB}}^{cop}(\mathbb C)}(\mathbb C)$ of characters of 
$\mathcal H_{{\rm FdB}}^{cop}(\mathbb C)$ isomorphically onto the group $\exp({\rm cl}(\mathfrak L_\mathbb C))\subset\mathcal A_*$. 
\noindent 
Further, $\sigma_\mathbb C$ transfers ordinary differential equation \eqref{e3.3} with values in $L(\mathcal H_{{\rm FdB}}^{cop}(\mathbb C);\mathbb C)$ corresponding to $a\in\mathscr X$ to equation
\begin{equation}\label{eq6.5}
\frac{dF}{dx}=\left(\,\sum_{i=1}^{\infty} a_{i}(x)\, DL^{i-1}\, t^{i}\right) F,\ \ \ x\in I_{T}.
\end{equation}
The fundamental solution $H_a$ of \eqref{e3.3} then goes to the fundamental solution $F_a:=\sigma_\mathbb C\circ H_a$ of \eqref{eq6.5} so that its value at $x=T$ is given by the formula 
\begin{equation}\label{eq6.6}
F_a(T;t)=I+\sum_{i=1}^{\infty}\left(\ \!\sum_{i_{1}+\cdots +i_{k}=i}I_{i_{1},\dots, i_{k}}(a)\,DL^{i_{k}-1}\cdots DL^{i_{1}-1}\right)t^{i}.
\end{equation}
Given $a\in\mathscr X$ the expression in the brackets of \eqref{eq6.6} is a linear operator, in what follows denoted by $I_i(a)$, on the algebra of complex polynomials $\mathbb C[z]$. The next result computes its values $I_i(a)(z^m)$ on the basis $\{z^m\}_{m\in\mathbb Z_+}$ of $\mathbb C[z]$.
\begin{Proposition}\label{prop6.4}
(1) If $m=0,\dots, i-1$, then $I_i(a)(z^m)=0$. 

(2) If 
$m\ge i$, then
\[
\begin{array}{l}
\displaystyle
I_i(a)(z^m)=\left(\sum_{i_{1}+\cdots +i_{k}=i}p_{i_1,\dots, i_k}(m)\cdot I_{i_{1},\dots, i_{k}}(a)\right) z^{m-i}\medskip\\
\displaystyle \qquad\qquad\ =\frac{(m-i+1)!}{(m+1)!} B_{m+1,m-i+1}(1, 2!p_1(a),\dots, (i+1)!p_{i}(a))\cdot z^{m-i},
\end{array}
\]
where $p_k(a)$ is the coefficient of $r^{k+1}$ in the series expansion of the first return map $\mathcal P(a)$, see \eqref{e4}.
\end{Proposition}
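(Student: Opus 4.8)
\emph{Parts (1) and the first equality of (2).} The plan is to read these off the same computation that already appears in the proof of Proposition~\ref{prop6.1}. Iterating the elementary action $DL^{\,i-1}(z^m)=(m-i+1)\,z^{m-i}$ telescopes the product to
\[
DL^{\,i_k-1}\cdots DL^{\,i_1-1}(z^m)=(m-i_1+1)(m-i_1-i_2+1)\cdots(m-i+1)\,z^{m-i}=p_{i_1,\dots,i_k}(m)\,z^{m-i}
\]
whenever $m\ge i:=i_1+\cdots+i_k$, with no intermediate annihilation since the running degree stays $\ge i_{\ell+1}+\cdots+i_k$. Because each factor $DL^{\,i_\ell-1}$ lowers the $z$-degree by exactly $i_\ell$, the operator $I_i(a)$ lowers degree by exactly $i$; hence $I_i(a)(z^m)=0$ for $m<i$ (a polynomial has no negative powers), which is (1), while for $m\ge i$ summing over compositions weighted by $I_{i_1,\dots,i_k}(a)$ yields the first displayed formula in (2).

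\emph{Reduction of the second equality.} I would recast the Bell-polynomial form as a statement about powers of the first return map. By the classical generating function for the partial Bell polynomials, $\frac{1}{k!}\,g(r)^k=\sum_{n\ge k}\frac{B_{n,k}(g'(0),g''(0),\dots)}{n!}\,r^n$, applied to $g:=\mathcal P(a)$ whose Taylor data are $g'(0)=1$ and $g^{(j+1)}(0)=(j+1)!\,p_j(a)$, the coefficient $\frac{(m-i+1)!}{(m+1)!}B_{m+1,\,m-i+1}\bigl(1,2!\,p_1(a),\dots,(i+1)!\,p_i(a)\bigr)$ on the right of (2) equals $[r^{m+1}]\,\mathcal P(a)(r)^{\,m-i+1}$ (take $n=m+1$, $k=m-i+1$). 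Thus it remains only to prove the combinatorial identity
\[
\sum_{i_1+\cdots+i_k=i}p_{i_1,\dots,i_k}(m)\,I_{i_1,\dots,i_k}(a)=[r^{m+1}]\,\mathcal P(a)(r)^{\,m-i+1},\qquad m\ge i.
\]

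\emph{Proof of the key identity via the scalar equation.} The heart of the matter is to identify the operator $F_a(T;1)$ (the $t=1$ specialization of \eqref{eq6.6}, harmless since only finitely many $I_i(a)$ act nontrivially on a fixed $z^m$) with the substitution operator of $\mathcal P(a)$. Let $\Phi_x:=v(x;\cdot;a)\in G_c[[r]]$ be the flow of the scalar equation \eqref{e1}, so $\frac{d\Phi_x}{dx}=\sum_i a_i(x)\,\Phi_x^{\,i+1}$ and $\Phi_T=\mathcal P(a)$, and put $(S_xf)(r):=f(\Phi_x(r))$. Differentiating gives $\frac{dS_x}{dx}=S_x\circ\mathcal V(x)$ with $\mathcal V(x):=\sum_i a_i(x)\,r^{i+1}\partial_r$ and $S_0=\mathrm{Id}$. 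Identifying $\mathbb C[z]$ with $r\,\mathbb C[r]$ via $z^m\leftrightarrow r^{m+1}$, a one-line index check shows $DL^{\,i-1}=(r^{i+1}\partial_r)^{\mathsf T}$ ($r^{i+1}\partial_r$ sends the basis index $m$ to $m+i$ with coefficient $m+1$, while $DL^{\,i-1}$ sends $m$ to $m-i$ with coefficient $m-i+1$, so the two are mutually transpose). Hence the generator of \eqref{eq6.5} at $t=1$ is $\mathcal D(x)=\mathcal V(x)^{\mathsf T}$. Transposing $\frac{dF}{dx}=\mathcal D(x)F$, $F_0=\mathrm{Id}$, gives $\frac{dF^{\mathsf T}}{dx}=F^{\mathsf T}\mathcal V(x)$, the very equation solved by $S_x$ with the same initial value; uniqueness then yields $F_a(T;1)=S_T^{\mathsf T}=S_{\mathcal P(a)}^{\mathsf T}$. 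Reading off the matrix entries of $S_{\mathcal P(a)}\colon r^{\ell+1}\mapsto \mathcal P(a)(r)^{\ell+1}$ in the monomial basis produces exactly $[r^{m+1}]\mathcal P(a)^{\,m-i+1}$ as the coefficient of $z^{m-i}$ in $F_a(T;1)(z^m)$, which is the required identity.

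\emph{Main obstacle.} The delicate point is the transposition step in infinite dimensions: one must justify passing to $F^{\mathsf T}$ and applying uniqueness. This is legitimate because every operator in sight is triangular and locally finite in the monomial basis, so the equations decouple degree by degree and the transpose is well defined termwise. The cleanest transpose-free alternative is to observe that $\sigma_\mathbb C$ and the substitution representation are both homomorphisms agreeing on the Lie generators $t_i'\leftrightarrow r^{i+1}\partial_r$, and to transport the equality from the generators to the group-like element $H_a(T)$ using $\Theta(H_a(T))=\mathcal P(a)$ from Theorem~\ref{te5.2} and \eqref{eq5.4}.
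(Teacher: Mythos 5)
Your proposal is correct, and it diverges from the paper's proof at exactly the crucial step. Part (1) and the first equality in (2) are handled the same way in the paper (direct application of the operators $DL^{i_k-1}\cdots DL^{i_1-1}$ to the monomial basis). For the second equality, however, the paper does not argue from scratch: it invokes the identity $F_a(T;t)\bigl(s(z,r)\bigr)=s\bigl(z,\mathcal P(a)(rt)/t\bigr)$ for the Cauchy kernel $s(z,r)=r(1-rz)^{-1}$, citing \cite[Sect.\,4]{Br6}, and then expands the right-hand side by the Fa\`a di Bruno formula and compares coefficients. You instead prove the needed fact yourself: your ODE-transposition argument --- that under $z^m\leftrightarrow r^{m+1}$ one has $DL^{i-1}=(r^{i+1}\partial_r)^{\mathsf T}$, so the operator equation \eqref{eq6.5} at $t=1$ is the transpose of the equation satisfied by the substitution operators $S_x f=f\circ\Phi_x$, whence $F_a(T;1)=S_{\mathcal P(a)}^{\mathsf T}$ by degree-by-degree uniqueness --- is precisely a proof of the $t=1$ specialization of the cited identity (the kernel identity is just the statement that $F_a(T;t)$ is transpose-to-substitution, since $s(z,r)$ is the kernel of the pairing $z^m\leftrightarrow r^{m+1}$). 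Your Bell-polynomial generating-function step, identifying $\frac{(m-i+1)!}{(m+1)!}B_{m+1,m-i+1}(1,2!\,p_1(a),\dots,(i+1)!\,p_i(a))$ with $[r^{m+1}]\,\mathcal P(a)(r)^{m-i+1}$, is combinatorially equivalent to the paper's Fa\`a di Bruno expansion. What the two routes buy: the paper's version is shorter, keeps the full $t$-grading throughout, and leans on prior work; yours is self-contained, and in effect you prove (rather than use) the content of Remark \ref{rem6.6}, namely that $\sigma_{\mathbb C}\circ\Theta^{-1}$ composed with $\mathcal P$ acts as the transpose of composition by $\mathcal P(a)$, i.e.\ the ``smallest faithful representation'' of $(G_{\mathbb C}[[r]],\circ)$. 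Your justification of the transposition and uniqueness step (triangularity and local finiteness of all matrices involved, so the system decouples by the degree drop $m-\ell$) is sound; the only soft spot is the ``transpose-free alternative'' sketched at the end, which as stated would need the exponential correspondence spelled out to transport agreement on Lie generators to the group-like element $H_a(T)$, but since it is offered only as an aside it does not affect the validity of the main argument.
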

\begin{proof}
(1) This follows directly from the definition of $I_i(a)$.\smallskip

\noindent (2) Applying $F_a$ to $s(z, r):=r\, (1-rz)^{-1}=\sum_{m=0}^\infty r^{m+1}\cdot z^m\in \mathbb C[[z]]$ we obtain, see, e.g., \cite[Sect.\,4]{Br6}, 
\begin{equation}\label{equ6.9}
F_a(T;t)\bigl(s(z,r)\bigr)=s\bigl(z, \mathcal P(a)(r\, t)/t\bigr)
\in \mathbb C[[t,z,r]].
\end{equation}
For $t$ and $z$ fixed, the latter is the composition of series  $s(z;r)\in\mathbb C[[r]]$ and 
$\frac{\mathcal P(a)(r \, t)}{t}\in\mathbb C[[r]]$. Hence, using the Fa\`{a} di Bruno formula, cf. \eqref{e2.18}, we obtain
\[
\begin{array}{l}
\displaystyle
s(z,r)+\sum_{i=1}^\infty I_i(a)(s(z,r))t^i=:F_a(T;t)\bigl(s(z,r)\bigr)\medskip\\
\displaystyle =r+\sum_{m=1}^\infty\left(\sum_{k=0}^m\frac{(k+1)!}{(m+1)!}z^k t^{m-k} B_{m+1,k+1}(1, 2!p_1(a),\dots, (m-k+1)!p_{m-k}(a))\right) r^{m+1}.
\end{array}
\]
Equating coefficients of $t^i$ in both sides of this identity we get
\[
I_i(a)(s(z,r))=\sum_{m=i}^\infty\left(\frac{(m-i+1)!}{(m+1)!} B_{m+1,m-i+1}(1, 2!p_1(a),\dots, (i+1)!p_{i}(a))\right) z^{m-i}\cdot r^{m+1}.
\]
On the other hand, the straightforward application of operators $DL^{i_{k}-1}\cdots DL^{i_{1}-1}$ to the basis $\{z^m\}_{m\in\mathbb Z_+}$ of $\mathbb C[z]$ shows that the left-hand side of the above expression equals
\[
\sum_{m=0}^\infty I_i(a)(z^m)\, r^{m+1}=
\sum_{m=i}^\infty\left(\sum_{i_{1}+\cdots +i_{k}=i}p_{i_1,\dots, i_k}(m)\cdot I_{i_{1},\dots, i_{k}}(a)\right) z^{m-i}\cdot r^{m+1}.
\]
Comparing coefficients of $r^{m+1}$ in the latter and the former equations we get the required identity.
\end{proof}
Further, recall that there is an isomorphism between the algebra $\mathcal I_\mathbb C(\mathscr X)$ generated by iterated integrals on $\mathscr X$ and the shuffle algebra $(\mathbb C\langle\mathscr A\rangle, \shuffle)$ sending functions $I_{i_1,\dots, i_k}$ on $\mathscr X$ to words $\alpha_{i_k}\dots\alpha_{i_1}\in\mathscr A^*$, $i_1,\dots, i_k, k\in\N$ (see Section~3.3). Applying this algebra isomorphism to the coefficients of $z^{m-i}$ in the identity of Proposition \ref{prop6.4} and noting that their images belong to $\mathbb Q\langle\mathscr A\rangle\, (\subset \mathbb K\langle\mathscr A\rangle)$, from Proposition \ref{prop6.3} we obtain that equation \eqref{equ6.5} is valid for all integers $t\ge i$, $i\in \N$. Since in both sides of \eqref{equ6.5} are polynomials in $t$ and this equation is valid for infinitely many values of $t$, it is valid for all $t\in\mathbb K$. This completes the proof of Theorem \ref{gpshuffle} for $i\in\N$. For $i=0$ the statement of the theorem is trivially true.
\end{proof}
\begin{R}\label{rem6.6}
{\rm Note that representation $\sigma_\mathbb C\circ\Theta^{-1}:(G_\mathbb C[[r]],\circ)\rightarrow \exp({\rm cl}(\mathfrak L_\mathbb C))\subset\mathcal A_*$, where
$\Theta^{-1}: (G_\mathbb C[[r]],\circ)\rightarrow
(G_{\mathcal H_{{\rm FdB}}^{cop}(\mathbb C)},*)$ is the isomorphism given by \eqref{eq4.7}, is faithful and due to Proposition \ref{prop6.4} coincides with the well-known `smallest faithful representation' of $(G_\mathbb C[[r]],\circ)$, see, e.g.,
\cite[Sect.\,3.1]{FM}; here $\exp({\rm cl}(\mathfrak L_\mathbb C))$ is considered as the subgroup of the group of invertible linear operators acting on the space of complex polynomials $\mathbb C[z]$. Thus, the values of the generalized displacement polynomials for $t\in\N$ are the `matrix entries' (written with respect to the basis $\{z^m\}_{m\in\mathbb Z_+}$ of $\mathbb C[z]$)
of the composite homomorphism $F_{\cdot}=(\sigma_\mathbb C\circ\Theta^{-1})\circ\mathcal P:(\mathscr X,*)\rightarrow \exp({\rm cl}(\mathfrak L_\mathbb C))$.
}
\end{R}
For $a=(a_1,a_2,\dots)\in\mathscr X$, we set
\[
\begin{array}{c}
\widetilde{\mathcal P}_i\bigl(\int\!a_1,\dots, \int\!a_i; t\bigr):= \displaystyle \sum_{i_{1}+\cdots +i_{k}=i}p_{i_1,\dots, i_k}(t)\cdot I_{i_{1},\dots, i_{k}}(a).
\end{array}
\]
Note that if we introduce bounded linear operators $E_{a_i}:L^{\infty}(I_T)\rightarrow L^\infty (I_T)$,
\[
(E_{a_i}f)(x):=\int_0^x a_i(s)f(s)\, ds,\quad x\in I_T,\quad i\in\N,
\]
then for each $t\in\mathbb C$,
\[
\begin{array}{l}
\widetilde{\mathcal P}_i\bigl(\int\!a_1,\dots, \int\!a_i; t\bigr)=\bigl(\widetilde{\mathcal P}_i(E_{a_{1}},\dots, E_{a_{i}};t)(1)\bigr)(T);
\end{array}
\]
here the product of operators is given by composition.\smallskip

As the corollary of Theorem \ref{gpshuffle} and Proposition \ref{prop6.4} we get (cf. Section~3.3):
\begin{C}\label{cor6.5}
(1) Suppose $a_k=(a_{1k},a_{2k},\dots)\in\mathscr X$, $k=1,2$. Then
\begin{equation}\label{eq6.10}
\begin{array}{c}
\widetilde{\mathcal P}_i\bigl(\int a_{11},\dots, \int a_{i1}; t\bigr)=\widetilde{\mathcal P}_i\bigl(\int a_{12},\dots, \int a_{i2}; t\bigr)\quad {\rm for\ all}\quad t\in\mathbb C,\ i\in\N,
\end{array}
\end{equation}
if and only if $a_1*a_2^{-1}\in\mathscr C$.\smallskip

\noindent (2) Let $g\in G_{\mathbb C\langle\mathscr A\rangle}(\mathbb C)$, $h\in \widehat{\mathscr C}_f$ be some
characters of the shuffle Hopf algebra $\mathbb C\langle\mathscr A\rangle$. Then
\begin{equation}\label{eq6.11}
(g*h)\left(\widetilde{\mathcal P}_i(\alpha_1,\dots,\alpha_i;t)\right)=g\left(\widetilde{\mathcal P}_i(\alpha_1,\dots,\alpha_i;t)\right)\quad {\rm for\ all}\quad t\in\mathbb C,\ i\in\N.
\end{equation}
\end{C}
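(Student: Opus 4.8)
The plan is to reduce both assertions to the single fact that $\rho_\mathbb C$ induces a group homomorphism on characters and that, by Theorem~\ref{gpshuffle}, every generalized displacement polynomial lies in the image of $\rho_\mathbb C^*$. First I would observe that $\widetilde{\mathcal P}_i\bigl(\int a_1,\dots,\int a_i;t\bigr)$ is nothing but the value of the Chen character $E(a)\in G_{\mathbb C\langle\mathscr A\rangle}(\mathbb C)$ on the homogeneous element $\widetilde{\mathcal P}_i(\alpha_1,\dots,\alpha_i;t)\in\mathbb C_i\langle\mathscr A\rangle$, since $E(a)$ pairs the word $\alpha_{i_k}\cdots\alpha_{i_1}$ with $I_{i_1,\dots,i_k}(a)$ (see \eqref{e2.16}). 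Writing $g_k:=E(a_k)$, both sides of \eqref{eq6.10} become $g_1$ and $g_2$ evaluated on $\widetilde{\mathcal P}_i(\alpha_1,\dots,\alpha_i;t)$, so that the corollary concerns values of shuffle characters on these elements.

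Next I would transfer everything to the Fa\`{a} di Bruno side. By Theorem~\ref{gpshuffle} together with $\mathcal P_j=\rho_\mathbb C^*(t_j)$ (Theorem~\ref{te5.1}) and the fact that $\rho_\mathbb C^*$ is an algebra map, one has $\widetilde{\mathcal P}_i(\alpha_1,\dots,\alpha_i;t)=\rho_\mathbb C^*(\xi_i(t))$ with $\xi_i(t):=\mathcal B_i(t_1,\dots,t_i,t-i+1)\in\mathcal H_{{\rm FdB}}^{cop}(\mathbb C)$. Consequently, for any character $g$,
\[
g\bigl(\widetilde{\mathcal P}_i(\alpha_1,\dots,\alpha_i;t)\bigr)=g\bigl(\rho_\mathbb C^*(\xi_i(t))\bigr)=\rho_\mathbb C(g)(\xi_i(t)),
\]
where $\rho_\mathbb C(g)=g\circ\rho_\mathbb C^*$ is the image of $g$ under the group homomorphism $\rho_\mathbb C:G_{\mathbb C\langle\mathscr A\rangle}(\mathbb C)\rightarrow G_{\mathcal H_{{\rm FdB}}^{cop}(\mathbb C)}(\mathbb C)$ dual to $\rho_\mathbb C^*$. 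Part~(2) is then immediate: since $h\in\widehat{\mathscr C}_f=G_{\mathbb C\langle\mathscr A\rangle}(\mathbb C)\cap{\rm Ker}\,\rho_\mathbb C$ we have $\rho_\mathbb C(h)=\iota$, the unit of the target group, so $\rho_\mathbb C(g*h)=\rho_\mathbb C(g)*\iota=\rho_\mathbb C(g)$; evaluating on $\xi_i(t)$ yields \eqref{eq6.11}.

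For part~(1) I would first translate the center condition. By the criterion \eqref{e6} and the identity $\rho_\mathbb C(E(a))(t_i)=E(a)(\mathcal P_i)$ one has $a\in\mathscr C\iff\rho_\mathbb C(E(a))=\iota$; combining $E(a_1*a_2^{-1})=g_1*g_2^{-1}$ from \eqref{e2.17} with the fact that $\rho_\mathbb C$ is a homomorphism gives $a_1*a_2^{-1}\in\mathscr C\iff\rho_\mathbb C(g_1)*\rho_\mathbb C(g_2)^{-1}=\iota\iff\rho_\mathbb C(g_1)=\rho_\mathbb C(g_2)$. It then remains to show that $\rho_\mathbb C(g_1)=\rho_\mathbb C(g_2)$ is equivalent to $\rho_\mathbb C(g_1)(\xi_i(t))=\rho_\mathbb C(g_2)(\xi_i(t))$ for all $i,t$. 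One direction is trivial. For the converse I would exploit the triangular structure of $\xi_i(t)$: inspecting \eqref{genbell1}, the only monomial of $\mathcal B_i(t_1,\dots,t_i,t-i+1)$ that is linear in the generators is $(t-i+1)\,t_i$, every remaining monomial being a product of generators $t_j$ with $j<i$, so that
\[
\xi_i(t)=(t-i+1)\,t_i+Q_i(t_1,\dots,t_{i-1}),
\]
where $Q_i$ is a polynomial in the lower generators with coefficients polynomial in $t$. Since the $\rho_\mathbb C(g_k)$ are algebra maps, $\rho_\mathbb C(g_k)(\xi_i(t))=(t-i+1)\,\rho_\mathbb C(g_k)(t_i)+\rho_\mathbb C(g_k)(Q_i)$, the last term depending only on $\rho_\mathbb C(g_k)(t_1),\dots,\rho_\mathbb C(g_k)(t_{i-1})$. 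Arguing by induction on $i$: if the two characters agree on $t_1,\dots,t_{i-1}$, the $Q_i$-contributions cancel and the hypothesis forces $(t-i+1)\bigl[\rho_\mathbb C(g_1)(t_i)-\rho_\mathbb C(g_2)(t_i)\bigr]=0$ for all $t$, whence $\rho_\mathbb C(g_1)(t_i)=\rho_\mathbb C(g_2)(t_i)$. As characters of $\mathcal H_{{\rm FdB}}^{cop}(\mathbb C)$ are determined by their values on the generators, this yields $\rho_\mathbb C(g_1)=\rho_\mathbb C(g_2)$.

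The main obstacle is this converse direction of part~(1): recovering equality of the two characters on all of $\mathcal H_{{\rm FdB}}^{cop}(\mathbb C)$ from equality of their values on the one-parameter families $\xi_i(t)$. The crux is the triangular normal form $\xi_i(t)=(t-i+1)\,t_i+Q_i(t_1,\dots,t_{i-1})$, to be read off directly from \eqref{genbell1}; it isolates the top generator $t_i$ with the coefficient $(t-i+1)$, which is nonzero for all but one value of $t$, and this is precisely what drives the induction. The remainder is formal bookkeeping with the homomorphism $\rho_\mathbb C$ and the identification $\widehat{\mathscr C}_f=G_{\mathbb C\langle\mathscr A\rangle}(\mathbb C)\cap{\rm Ker}\,\rho_\mathbb C$.
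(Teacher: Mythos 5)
Your proof is correct, and part (1) follows a genuinely different route from the paper's. The paper disposes of (1) in two lines: by Theorem \ref{gpshuffle} the quantities $\widetilde{\mathcal P}_i\bigl(\int a_{1k},\dots,\int a_{ik};t\bigr)$ are Bell-polynomial expressions in the first-return-map coefficients $\mathcal P_j\bigl(\int a_{1k},\dots,\int a_{jk}\bigr)$, $j\le i$, while the specialization $t=i$ recovers those coefficients (indeed $\mathcal B_i(t_1,\dots,t_i,1)=t_i$, i.e. $\widetilde{\mathcal P}_i(\,\cdot\,;i)=\mathcal P_i$), so \eqref{eq6.10} holds iff $\mathcal P(a_1)=\mathcal P(a_2)$, and the cited equivalence $\mathcal P(a_1)=\mathcal P(a_2)\Leftrightarrow a_1*a_2^{-1}\in\mathscr C$ finishes the argument. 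You never invoke the first return map: you transport everything to the Fa\`{a} di Bruno character group via $\rho_\mathbb C(g)=g\circ\rho_\mathbb C^*$, derive the center criterion $a\in\mathscr C\Leftrightarrow\rho_\mathbb C(E(a))=\iota$ from \eqref{e6} and \eqref{e2.17}, and recover $\rho_\mathbb C(g_1)=\rho_\mathbb C(g_2)$ from agreement on the elements $\xi_i(t)$ by induction, using the triangular form $\xi_i(t)=(t-i+1)\,t_i+Q_i(t_1,\dots,t_{i-1})$, which you correctly read off from \eqref{genbell1}. This costs more work than the implicit $t=i$ specialization, but it is self-contained at the level of characters and yields slightly more: agreement in \eqref{eq6.10} for a single suitable value of $t$ for each $i$ (any $t\ne i-1$), rather than for all $t\in\mathbb C$, already forces $a_1*a_2^{-1}\in\mathscr C$. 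For part (2) your argument coincides in substance with the paper's but is more complete: the paper only remarks that $h$ vanishes on $\widetilde{\mathcal P}_i(\alpha_1,\dots,\alpha_i;t)\in\rho_\mathbb C^*\bigl(\mathcal H_{{\rm FdB}}^{cop}(\mathbb C)\bigr)$, and the passage from this to \eqref{eq6.11} (one needs $h$ to kill every factor of the decatenation coproduct lying in the augmentation ideal of the image of $\rho_\mathbb C^*$, not just the single element itself) is exactly what your identity $\rho_\mathbb C(g*h)=\rho_\mathbb C(g)*\rho_\mathbb C(h)=\rho_\mathbb C(g)$ packages cleanly.
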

\begin{proof}
(1) Since the first return map of equation \eqref{e1} $\mathcal P$ satisfies $\mathcal P(a_1)=\mathcal P(a_2)$ $\Leftrightarrow$ $a_1*a_2^{-1}\in\mathscr C$, the result follows from
Theorem \ref{gpshuffle}.
\smallskip

\noindent (2) Since, by Theorem \ref{gpshuffle}, $\widetilde{\mathcal P}_i(\alpha_1,\dots,\alpha_i;t)\in
\rho_\mathbb C^*\bigl(\mathcal H_{{\rm FdB}}^{cop}(\mathbb C)\bigr)$ each $h\in \widehat{\mathscr C}_f$ vanishes on it, see Section~5.3. This implies  \eqref{eq6.11}.
\end{proof}
\begin{R}
{\rm Suppose  $u_1,u_2,\dots $ is a sequence of complex-valued Lipschitz functions on $I_T$ such that $u_i(0)=0$ for all $i$. We define the sequence
$a_1,a_2,\dots$ of functions in $L^\infty(I_T)$ from the equality of formal power series
\begin{equation}\label{rem6.14}
\sum_{i=1}^\infty a_i(x)\,t^{i+1}=\frac{\sum_{k=1}^\infty u_k'(T-x)\,t^{k+1}}{1+\sum_{k=1}^\infty(k+1)u_k(T-x)\,t^k},\qquad x\in I_T.
\end{equation}
The differential equation
\[
\frac{dv}{dx}=\sum_{i=1}^\infty a_i(x)\,v^{i+1}
\]
has a solution $v(x;r)$, $x\in I_T$, $v(0;r)=r$, presenting as a formal power series in $r$ with Lipschitz coefficients. Identity \eqref{rem6.14} implies that
\[
v(x;r)+\sum_{i=1}^\infty u_i(T-x)\,v(x;r)^{i+1}=r+\sum_{i=1}^\infty u_i(T) r^{i+1}\qquad {\rm for\ all}\quad x\in I_T,
\]
cf. the second part of the proof of \cite[Th.\,3.7]{Br6} for similar arguments. Since the first return map of the above equation is given by formula \eqref{e4} as well, 
\begin{equation}\label{rem6.15}
\begin{array}{l}
\mathcal P_i\bigl(\int\!a_1,\dots, \int\!a_i\bigr)=u_i(T)\qquad {\rm for\ all}\quad i\in\N.
\end{array}
\end{equation}
Here $\mathcal P_i\bigl(\int\!a_1,\dots, \int\!a_i\bigr):=\widetilde{\mathcal P}_i\bigl(\int\!a_1,\dots, \int\!a_i;i\bigr)$. 

For instance, assume that $u_i(T):=\frac{t_i}{(i+1)!}$ for some $t_i\in\mathbb C$, $i\in\N$. Then by Theorem \ref{gpshuffle} and \eqref{rem6.15} we obtain for all natural numbers $i,j$ such that $i-j\ge 1$,
\begin{equation}\label{equ6.16}
\begin{array}{l}
\widetilde{\mathcal P}_{i-j}\bigl(\int\!a_1,\dots, \int\!a_{i-j};i-1\bigr)=\displaystyle \frac{j!}{i!}B_{i,j}(1,t_1,\dots, t_{i-j}).
\end{array}
\end{equation}
Thus the Bell polynomials can be expressed in many different ways via the values of the generalized displacement polynomials; e.g., one can choose $u_i(x)=\frac{t_i x^l}{T^l(i+1)!}$, $l, i\in\N$. As an illustration,
let us consider in details the case of $u_i(x)=\frac{t_i x}{T(i+1)!}$, $i\in\N$. Then \eqref{rem6.14} becomes
\[
\begin{array}{l}
\displaystyle
\sum_{i=1}^\infty a_i(x)\,t^{i+1}=\frac{\sum_{k=1}^\infty \frac{t_k }{T(k+1)!}\,t^{k+1}}{1+\sum_{k=1}^\infty \frac{T-x}{T}\cdot\frac{t_k}{k!}\,t^k}\medskip\\
\displaystyle
\qquad\qquad\qquad =\!\left(\sum_{k=1}^\infty \frac{t_k }{T(k+1)!}\,t^{k+1}\right)\!\cdot\!\left(\sum_{l=0}^\infty \left(\frac{x-T}{T}\right)^l\!\cdot\!\left[\sum_{k=1}^\infty \frac{t_k}{k!}\,t^k\right]^l\right)\medskip\\
\displaystyle \qquad\qquad\qquad =\!\left(\sum_{k=1}^\infty \frac{t_k }{T(k+1)!}\,t^{k+1}\right)\!\cdot\!\left(\sum_{l=0}^\infty \left(\frac{x-T}{T}\right)^l\!\cdot\! \left[l!\!\cdot\!\sum_{m=l}^\infty B_{m,l}(t_1,\dots,t_{m-l+1})\frac{t^m}{m!}\right]\right)\!.
\end{array}
\]
(Equality of expressions in the square brackets in lines two and three above follows from the Fa\`{a} di Bruno formula, see, e.g.,  \cite[page\,133]{Com}; here $B_{0,0}:=1$, $B_{m,0}:=0$, $m\ge1$.)

The latter implies that \eqref{equ6.16} is valid for
\[
a_i(x)=\frac{1}{(i+1)!}\sum_{k=1}^i \frac{t_k }{T}\cdot\binom{i+1}{i-k}\cdot\left(\sum_{l=0}^{i-k} l ! \cdot B_{i-k,l}(t_1,\dots,t_{i-k-l+1})\cdot\left(\frac{x-T}{T}\right)^l\right),\quad i\in\N.
\]
}
\end{R}


\subsubsection{{\bf Recurrence Relations}}
We apply Theorem \ref{gpshuffle} to establish several `recurrence relations' for polynomials $\widetilde{\mathcal P}_i(\alpha_1,\dots,\alpha_i;\cdot)$, $i\in\N$. Their proofs are based on known recurrence relations for the Bell polynomials, see, e.g., \cite{Com, Rio, Rom, Cv}.
\begin{Th}\label{te6.7}
We have
\begin{itemize}
\item[(A)]
\[
\begin{array}{l}
\displaystyle
\widetilde{\mathcal P}_i(\cdot;t)=
\frac{t-i+1}{t+1}\cdot\sum_{j=1}^{i+1} j\,\mathcal P_{j-1}\shuffle\widetilde{\mathcal P}_{i-j+1}(\cdot; t-j);
\end{array}
\]
\item[(B)]
\[
\widetilde{\mathcal P}_i(\cdot; t)=\sum_{j=1}^i  \left( \frac{(t-i+2)j}{i} -1\right) \mathcal P_j\shuffle\widetilde{\mathcal P}_{i-j}(\cdot; t-j).
\]
\end{itemize}
\end{Th}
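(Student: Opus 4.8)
The plan is to push both identities through the isomorphism of Theorem \ref{gpshuffle}, converting them into elementary recurrences for the polynomials $\mathcal B_k$ of \eqref{genbell1}, and then to transport these back into the shuffle algebra. The key remark is that the $\mathcal B_k$ are the homogeneous components of a formal power,
\[
\Phi(z)^u=\sum_{k=0}^\infty \mathcal B_k(t_1,\dots,t_k;u)\,z^k,\qquad \Phi(z):=1+\sum_{m=1}^\infty t_m z^m,
\]
writing $\mathcal B_k(u):=\mathcal B_k(t_1,\dots,t_k;u)$ from now on. This follows from \eqref{genbell1} by expanding $\Phi(z)^u=\sum_L\binom{u}{L}\bigl(\sum_{m\ge1}t_mz^m\bigr)^L$, applying the multinomial theorem, and observing that $\binom{u}{L}\,L!=\prod_{l=1-L}^0(u+l)$ with $L=l_1+\cdots+l_k$. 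Because the shuffle product is commutative, $t_m\mapsto\mathcal P_m$ (with $\mathcal P_0=1$) defines an algebra homomorphism $\mathbb K[t_1,t_2,\dots]\to(\mathbb K\langle\mathscr A\rangle,\shuffle)$; hence any polynomial identity among the $\mathcal B_k$ in the commuting $t_m$ transports verbatim, and by \eqref{equ6.5} the image $\mathcal B_n(u)$ of a single term equals $\widetilde{\mathcal P}_n(\cdot;u+n-1)$.

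Next I would record two one-line generating-function identities: the exponent law $\Phi^u=\Phi\cdot\Phi^{u-1}$ and the chain rule $(\Phi^u)'=u\,\Phi^{u-1}\Phi'$ (with $'=d/dz$). Reading off the coefficient of $z^i$ in the first and of $z^{i-1}$ in the second yields
\[
\mathcal B_i(u)=\sum_{j=0}^i t_j\,\mathcal B_{i-j}(u-1),\qquad i\,\mathcal B_i(u)=u\sum_{j=1}^i j\,t_j\,\mathcal B_{i-j}(u-1),
\]
while multiplying the chain rule through by $\Phi$ gives the ODE $\Phi\,(\Phi^u)'=u\,\Phi'\,\Phi^u$, whose coefficient of $z^{i-1}$ is
\[
i\,\mathcal B_i(u)=\sum_{j=1}^i\bigl[(u+1)j-i\bigr]\,t_j\,\mathcal B_{i-j}(u)
\]
(here $t_0=1$). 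These are polynomial identities in $u$ and the $t_m$, hence remain valid after the scalar specialization $u=t-i+1$ and the substitution $t_m\mapsto\mathcal P_m$; this also settles validity for all $t\in\mathbb K$.

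It remains to specialize and translate, which is where the only genuine bookkeeping lies. Put $u:=t-i+1$, so that $\mathcal B_i(u)=\widetilde{\mathcal P}_i(\cdot;t)$ and $t+1=u+i$. For (B) the exponent in the third display stays at $u$: since $\widetilde{\mathcal P}_{i-j}(\cdot;t-j)=\mathcal B_{i-j}\bigl((t-j)-(i-j)+1\bigr)=\mathcal B_{i-j}(u)$ — the drop of $j$ in the degree exactly cancels the drop of $j$ in the parameter — dividing that display by $i$ and substituting $t_j\mapsto\mathcal P_j$ reproduces (B). For (A) I combine the two identities of the second display: adding $u$ times the first to the second gives
\[
(u+i)\,\mathcal B_i(u)=u\sum_{m=0}^i(m+1)\,t_m\,\mathcal B_{i-m}(u-1),
\]
and here the exponent has dropped to $u-1$, so that $\mathcal B_{i-m}(u-1)=\widetilde{\mathcal P}_{i-m}(\cdot;t-m-1)$; re-indexing $m=j-1$, dividing by $u+i=t+1$, and substituting $t_{j-1}\mapsto\mathcal P_{j-1}$ yields (A). I expect the only real pitfall to be precisely this parameter/degree accounting — tracking that $\widetilde{\mathcal P}_n(\cdot;\tau)$ corresponds to $\mathcal B_n$ at the shifted argument $\tau-n+1$, and checking that the inner shift $\tau\mapsto\tau-j$ lands on $u$ in (B) and on $u-1$ in (A). As a sanity check one can specialize to $i=1,2$, where the formulas collapse to $\widetilde{\mathcal P}_1(\cdot;t)=t\,\mathcal P_1$ and $\widetilde{\mathcal P}_2(\cdot;t)=\tfrac{(t-1)(t-2)}{2}\,\mathcal P_1\shuffle\mathcal P_1+(t-1)\mathcal P_2$; everything else is routine, since the substantive inputs are just the exponent law and the chain rule.
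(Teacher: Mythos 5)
Your proof is correct, and it reaches the theorem by a genuinely different route than the paper. Both arguments funnel through Theorem \ref{gpshuffle}, reducing (A) and (B) to recurrences for the polynomials $\mathcal B_k$ of \eqref{genbell1}: your combined identity $(u+i)\,\mathcal B_i(u)=u\sum_{m=0}^i(m+1)\,t_m\,\mathcal B_{i-m}(u-1)$ is, after reindexing, exactly Proposition \ref{prop6.8}, and your identity $i\,\mathcal B_i(u)=\sum_{j=1}^i\bigl[(u+1)j-i\bigr]\,t_j\,\mathcal B_{i-j}(u)$ is Proposition \ref{prop6.9}. The difference lies in how these are obtained and transported back. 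The paper derives them by specializing two known recurrences for the partial Bell polynomials --- the classical \eqref{eq6.14} and Cvijovi\'c's identity \eqref{eq6.15} --- through Proposition \ref{prop6.3}, with factorial bookkeeping and an extension from the integer parameters $t=i-k+1$ to all $t$ by polynomiality; it then transports the result into the shuffle algebra by evaluating both sides on arbitrary characters $g\in G_{\mathbb C\langle\mathscr A\rangle}(\mathbb C)$. You instead identify $\mathcal B_k(\cdot\,;u)$ as the coefficients of the formal power $\Phi(z)^u$ with $\Phi=1+\sum_{m\ge1}t_mz^m$ (a correct computation: $\binom{u}{L}L!=\prod_{l=1-L}^{0}(u+l)$ matches the product in \eqref{genbell1}), and get both recurrences in a few lines from the exponent law $\Phi^u=\Phi\cdot\Phi^{u-1}$ and the logarithmic-derivative relation $\Phi\,(\Phi^u)'=u\,\Phi'\,\Phi^u$; you then transport them via the algebra homomorphism $\mathbb K[t_1,t_2,\dots]\to(\mathbb K\langle\mathscr A\rangle,\shuffle)$, $t_m\mapsto\mathcal P_m$, which is legitimate precisely because the shuffle product is commutative. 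Your route is more self-contained --- it needs no external Bell-polynomial identities, in particular not the nontrivial one from \cite{Cv}, and it explains where those recurrences come from --- and the homomorphism transport is cleaner than the character argument, which tacitly uses that characters separate points of the shuffle algebra. What the paper's route buys is the explicit tie to the Bell-polynomial literature through Proposition \ref{prop6.3}, which it needs elsewhere in Section 6 anyway. Your parameter accounting (the inner shift $\tau\mapsto\tau-j$ landing on $u$ in (B) and on $u-1$ in (A)) is exactly the crux, and it is handled correctly, as your $i=1,2$ checks confirm.
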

(Recall that $\mathcal P_k(\alpha_1,\dots,\alpha_k):=\widetilde{\mathcal P}_k(\alpha_1,\dots,\alpha_k;k)$ and $\widetilde{\mathcal P}_0=1$.)
\begin{proof} (A) First, using the well-known recurrence relation for
the Bell polynomials,
\begin{equation}\label{eq6.14}
 B_{n,m}(x_1,\dots, x_{n-m+1}) = \sum_{j=1}^{n-m+1} \binom{n-1}{j-1} x_{j} B_{n-j,m-1}(x_1,\dots,x_{n-j-m+2}), 
 \end{equation}
we establish the following result for polynomials $\mathcal B_k$, see \eqref{genbell1}.

\begin{Proposition}\label{prop6.8}
For all $k\in\N$,
\begin{equation}\label{equ6.15}
\mathcal B_k(t_1,\dots, t_k, t)=\frac{t}{t+k}\cdot\sum_{j=1}^{k+1} j\,t_{j-1}\mathcal B_{k-j+1}(t_1,\dots, t_{k-j+1}, t-1);
\end{equation}
here and below $t_0:=1$.
\end{Proposition}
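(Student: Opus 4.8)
The plan is to prove \eqref{equ6.15} by reducing it to a polynomial identity in $t$ and checking it at infinitely many integer values, where Proposition \ref{prop6.3} lets me trade the polynomials $\mathcal B$ for genuine partial Bell polynomials and apply the classical recurrence \eqref{eq6.14}. First I would clear the denominator: multiplying through by $(t+k)$, the claim \eqref{equ6.15} is equivalent to the identity
\[
(t+k)\,\mathcal B_k(t_1,\dots,t_k,t)=t\sum_{j=1}^{k+1} j\,t_{j-1}\,\mathcal B_{k-j+1}(t_1,\dots,t_{k-j+1},t-1),
\]
whose two sides are now polynomials in $t$ with coefficients in $\mathbb Q[t_1,\dots,t_k]$ (each $\mathcal B$ being polynomial in its last slot, since $\prod_{l=1-L}^{0}(t+l)$ is the falling factorial $t^{\underline L}$). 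Hence it suffices to verify it at $t=m+1$ for every integer $m\ge 1$; since these are infinitely many values, the polynomial identity, and therefore \eqref{equ6.15} itself, follows.

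Next, fixing such an $m$, I would invoke Proposition \ref{prop6.3} to rewrite every $\mathcal B$ at an integer argument as a Bell polynomial. Reading \eqref{genbell2} with $(i,j)=(k+m,m)$ gives $\mathcal B_k(t_1,\dots,t_k,m+1)=\frac{(m+1)!}{(k+m+1)!}\,B_{k+m+1,m+1}\bigl(1,2!\,t_1,\dots,(k+1)!\,t_k\bigr)$, and with $(i,j)=(k-j+m,m-1)$ it gives $\mathcal B_{k-j+1}(t_1,\dots,t_{k-j+1},m)=\frac{m!}{(k-j+m+1)!}\,B_{k-j+m+1,m}\bigl(1,2!\,t_1,\dots\bigr)$. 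Crucially, the shift $t\mapsto t-1$ in \eqref{equ6.15} exactly matches the shift $m+1\mapsto m$ of the lower Bell index, so both families of Bell polynomials share the same argument list $x_\ell=\ell!\,t_{\ell-1}$ (with $t_0:=1$, so $x_1=1$).

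The heart of the argument is then to apply the Bell recurrence \eqref{eq6.14} to $B_{k+m+1,m+1}$, with $n=k+m+1$ and lower index $m+1$: the upper summation limit is $n-(m+1)+1=k+1$, producing $\sum_{j=1}^{k+1}\binom{k+m}{j-1}\,x_j\,B_{k+m+1-j,m}$, where $x_j=j!\,t_{j-1}$ and $k+m+1-j=k-j+m+1$ matches the second Bell index above. Converting that residual Bell factor back to $\mathcal B_{k-j+1}(\dots,m)$ and collecting the scalar in front of $t_{j-1}\mathcal B_{k-j+1}(\dots,m)$, I would compute
\[
\frac{(m+1)!}{(k+m+1)!}\binom{k+m}{j-1}\,j!\,\frac{(k-j+m+1)!}{m!}=\frac{(m+1)\,j}{k+m+1},
\]
using $\binom{k+m}{j-1}(k-j+m+1)!=\frac{(k+m)!}{(j-1)!}$; this collapses precisely to the factor $\frac{m+1}{k+m+1}\cdot j$ demanded by \eqref{equ6.15} at $t=m+1$.

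The main obstacle is purely the index-and-factorial bookkeeping: I must check that the three substitutions into Proposition \ref{prop6.3} are legitimate (the hypothesis $i-j\ge1$ holds for $1\le j\le k$, while the boundary term $j=k+1$ reduces to the base case $\mathcal B_0=1=B_{m,m}(1,\dots)$, which the conversion formula respects), and that the binomial--factorial product simplifies cleanly to $\frac{(m+1)j}{k+m+1}$. Everything else is formal, and restricting to $m\ge1$ sidesteps the degenerate argument $t=0$; one may note separately that the identity also holds at $t=1$, where both sides reduce to $t_k$, but this is not needed for the polynomial-interpolation conclusion.
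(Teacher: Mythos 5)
Your proof is correct and follows essentially the same route as the paper's: convert $\mathcal B$ to partial Bell polynomials via Proposition \ref{prop6.3} at integer values of the last argument, apply the classical recurrence \eqref{eq6.14} with $n=k+m+1$ and lower index $m+1$, collapse the factorials to the factor $\frac{(m+1)\,j}{k+m+1}$, and finish by interpolation at infinitely many integer points. Your version is marginally more careful than the paper's (you clear the denominator to work with polynomials rather than rational functions, and you explicitly handle the $j=k+1$ boundary term and the degenerate small values of $t$), but the substance is identical.
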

\begin{proof}
Applying \eqref{eq6.14} with $x_j:=j! t_{j-1}$, $1\le j\le n-m+1$, we get
\[
\begin{array}{l}
\displaystyle
B_{n,m}(1,2!t_1,\dots, (n-m+1)!t_{n-m})\medskip\\
\displaystyle = \sum_{j=1}^{n-m+1} \frac{(n-1)!}{(n-j)!(j-1)!} (j!t_{j-1}) B_{n-j,m-1} (1,2!t_1,\dots,(n-m+2-j)!t_{n-m+1-j}).
\end{array}
\]
This formula with $n=i+1$, $m=i-k+1$ and Proposition  \ref{prop6.3} imply for all integers $i\ge k$,
\[
\begin{array}{l}
\displaystyle \mathcal B_k(t_1,\dots, t_k, i-k+1)=\frac{(i-k+1)!}{(i+1)!}
B_{i+1,i-k+1}(1,2!t_1,\dots, (k+1)!t_k)\medskip\\
\displaystyle =\frac{(i-k+1)!}{(i+1)!}\cdot\sum_{j=1}^{k+1} \frac{i!}{(i+1-j)!}j\,t_{j-1}B_{i+1-j,i-k}(1,2!t_1,\dots , (k+2-j)!t_{k+1-j})\medskip\\
\displaystyle =\frac{(i-k+1)!}{(i+1)!}\cdot\sum_{j=1}^{k+1} \frac{i!}{(i+1-j)!}j\,t_{j-1}\cdot\frac{(i+1-j)!}{(i-k)!}\mathcal B_{k+1-j}(t_1,\dots, t_{k+1-j},i-k)\medskip\\
\displaystyle =\frac{i-k+1}{i+1}\sum_{j=1}^{k+1} j\,t_{j-1}\mathcal B_{k+1-j}(t_1,\dots, t_{k+1-j},i-k).
\end{array}
\]
This shows that \eqref{equ6.15} is valid for all
$t=i-k+1$ where $i\ge k$ is an integer number. Since both parts of the required identity comprise rational functions in $t$, the latter implies that \eqref{equ6.15} is valid for all $t\in\mathbb C$, as required.
\end{proof}
Now, identity (A) follows directly from Proposition \ref{prop6.8} and Theorem \ref{gpshuffle}.  Indeed, according to these results 
for each character $g\in G_{\mathbb C\langle\mathscr A\rangle}(\mathbb C)$,
\[
\begin{array}{l}
\displaystyle
g\left(\widetilde{\mathcal P}_i(\alpha_1,\dots,\alpha_i;t)\right)=g\left(\mathcal B_i(\mathcal P_1(\alpha_1),\dots, \mathcal P_{i}(\alpha_1,\dots,\alpha_i), t-i+1)\right)\medskip\\
\displaystyle
=\frac{t-i+1}{t+1}\cdot\sum_{j=1}^{i+1} j\,g_{j-1}\cdot\mathcal B_{i-j+1}(g_1,\dots , g_{i-j+1}, t-i))\medskip\\
\displaystyle =g\left(\frac{t-i+1}{t+1}\cdot\sum_{j=1}^{i+1} j\, \mathcal P_{j-1}(\alpha_1,\dots,\alpha_{j-1})\shuffle\widetilde{\mathcal P}_{i-j+1}(\alpha_1,\dots,\alpha_{i-j+1}; t-j)\right),
\end{array}
\]
where $g_k:=g\left(\mathcal P_k(\alpha_1,\dots,\alpha_j)\right)$,
$0\le k\le i$. 

This gives identity \eqref{equ6.15}.\medskip

\noindent (B) Using the recurrence relation for the Bell polynomials established in \cite{Cv},
\begin{equation}\label{eq6.15}
\begin{array}{l}
\displaystyle
B_{n,m}(x_1,\dots, x_{n-m+1})\medskip\\
\displaystyle =\frac{1}{x_1}\cdot\frac{1}{n-m}\sum_{j=1}^{n-m}\binom{n}{j}\left(m+1-\frac{n+1}{j+1}\right) x_{j+1}B_{n-j,m}(x_1,\dots, x_{n-j-m+1}),
\end{array}
\end{equation}
we prove the following result for polynomials $\mathcal B_k$.

\begin{Proposition}\label{prop6.9}
For all $k\in\N$
\begin{equation}\label{equ6.17}
\mathcal B_k(t_1,\dots, t_k, t)=\sum_{j=1}^k  \left( \frac{(t+1)\,j}{k} -1\right) t_j\mathcal B_{k-j}(t_1,\dots, t_{k-j}, t).
\end{equation}
\end{Proposition}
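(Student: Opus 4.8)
The plan is to mirror the proof of Proposition \ref{prop6.8}, replacing the Bell-polynomial recurrence \eqref{eq6.14} by the Cvijovi\'c recurrence \eqref{eq6.15}. First I would specialize Proposition \ref{prop6.3}: setting $i-j=k$ and $j+1=t$ there (so that $i=k+t-1$) yields, for every positive integer $t$,
\[
\mathcal B_k(t_1,\dots, t_k, t)=\frac{t!}{(k+t)!}\,B_{k+t,\,t}\bigl(1,2!\,t_1,\dots,(k+1)!\,t_k\bigr).
\]
The same identity with $k$ replaced by $k-j$ expresses each $B_{k+t-j,\,t}(1,2!\,t_1,\dots)$ in terms of $\mathcal B_{k-j}(t_1,\dots,t_{k-j},t)$. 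These two reformulations convert the target identity \eqref{equ6.17} into a statement purely about Bell polynomials, and reduce everything to a single algebraic manipulation.

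Next I would apply \eqref{eq6.15} with $n=k+t$ and $m=t$, so that $n-m=k$ and the sum runs over $1\le j\le k$. Since the first argument is $x_1=1$, the factor $1/x_1$ disappears; substituting $x_{j+1}=(j+1)!\,t_j$ and using the reformulation of $B_{k+t-j,\,t}$ above turns the right-hand side into a sum of the $\mathcal B_{k-j}(\,\cdot\,,t)$ weighted by explicit factorial coefficients coming from $\binom{k+t}{j}$ and the two applications of Proposition \ref{prop6.3}.

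The main work is then the factorial bookkeeping. Collecting the product $\frac{t!}{(k+t)!}\binom{k+t}{j}(j+1)!\,\frac{(k-j+t)!}{t!}$ one checks that everything telescopes to $(j+1)$ (using $(k+t-j)!=(k-j+t)!$), after which the remaining coefficient
\[
\frac{1}{k}\,(j+1)\Bigl(t+1-\tfrac{k+t+1}{j+1}\Bigr)
\]
simplifies to $\frac{(t+1)\,j}{k}-1$, which gives exactly \eqref{equ6.17} for every positive integer $t$. I expect the delicate points here to be matching the argument lists of the Bell polynomials to the hypotheses of Proposition \ref{prop6.3} (the shift $x_{j+1}=(j+1)!\,t_j$ must align with the pattern $1,2!\,t_1,\dots$) and confirming that $B_{k+t-j,\,t}$ indeed has the $k-j+1$ arguments required for that proposition to apply.

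Finally, since $\mathcal B_k(t_1,\dots,t_k,t)$ is visibly a polynomial in $t$ by its defining formula \eqref{genbell1} (the product $\prod_{l=1-l_1-\cdots-l_k}^{0}(t+l)$ being polynomial in $t$), both sides of \eqref{equ6.17} are polynomials in $t$. Having verified the identity at the infinitely many positive integer values of $t$, I would conclude that it holds for all $t\in\mathbb C$, exactly as at the end of the proof of Proposition \ref{prop6.8}.
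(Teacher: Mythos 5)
Your proposal is correct and follows essentially the same route as the paper: both specialize Proposition \ref{prop6.3} (your parametrization $n=k+t$, $m=t$ is just the paper's $n=i+1$, $m=i-k+1$ with $t=i-k+1$), apply the Cvijovi\'c recurrence \eqref{eq6.15} with $x_{j+1}=(j+1)!\,t_j$, telescope the factorials to the coefficient $\frac{(t+1)j}{k}-1$, and extend from integer $t$ to all $t\in\mathbb C$ by polynomiality. Your factorial bookkeeping checks out exactly as in the paper's computation.
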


\begin{proof}
Applying Proposition \ref{prop6.3} and \eqref{eq6.15} with $x_j:=j! t_{j-1}$, $1\le j\le k+1$, 
$n=i+1$, $m=i-k+1$, $i\ge k$,  we get
\[
\begin{array}{l}
\displaystyle \mathcal B_k(t_1,\dots, t_k, i-k+1)=\frac{(i-k+1)!}{(i+1)!}
B_{i+1,i-k+1}(1,2!t_1,\dots, (k+1)!t_k)\medskip\\
\displaystyle =\frac{(i-k+1)!}{(i+1)!}\frac{1}{k}\sum_{j=1}^k\binom{i+1}{j}\left(i-k+2-\frac{i+2}{j+1}\right) x_{j+1} B_{i+1-j,i-k+1}(x_1,\dots,x_{k-j+1})\medskip\\
\displaystyle =\frac{1}{k}\sum_{j=1}^k\frac{(i+1-k)!}{(i+1-j)!}((i-k+2)(j+1)-(i+2)) t_j B_{i+1-j,i-k+1}(1,\dots,(k-j+1)!t_{k-j})\medskip\\
\displaystyle =\sum_{j=1}^k\frac{(i+1-k)!}{(i+1-j)!}\left( \frac{(i-k+2)j}{k} -1\right) t_j\cdot\frac{(i+1-j)!}{(i-k+1)!} \mathcal B_{k-j}(t_1,\dots, t_{k-j}, i-k+1)\medskip\\
\displaystyle =\sum_{j=1}^k \left( \frac{(i-k+2)j}{k} -1\right) t_j \mathcal B_{k-j}(t_1,\dots, t_{k-j}, i-k+1).
\end{array}
\]
Thus identity \eqref{equ6.17} is valid for all
$t=i-k+1$ where $i\ge k$ is an integer number. Since both parts of the required identity comprise polynomials in $t$, the latter implies that \eqref{equ6.17} is valid for all $t\in\mathbb C$, as required.
\end{proof}

As in the proof of (A), identity (B) follows directly from Proposition \ref{prop6.9} and Theorem \ref{gpshuffle}.  Indeed, according to these results 
for each character $g\in G_{\mathbb C\langle\mathscr A\rangle}(\mathbb C)$,
\[
\begin{array}{l}
\displaystyle
g\left(\widetilde{\mathcal P}_i(\alpha_1,\dots,\alpha_i;t)\right)=g\left(\mathcal B_i(\mathcal P_1(\alpha_1),\dots, \mathcal P_{i}(\alpha_1,\dots,\alpha_i), t-i+1)\right)\medskip\\
\displaystyle =\sum_{j=1}^i  \left( \frac{(t-i+2)\,j}{i} -1\right) g_j\mathcal B_{i-j}(g_1,\dots, g_{i-j}, t-i+1)\medskip\\
\displaystyle = g\left( \sum_{j=1}^i  \left( \frac{(t-i+2)j}{i} -1\right) \mathcal P_j(\alpha_1,\dots,\alpha_j)\shuffle\widetilde{\mathcal P}_{i-j}(\alpha_1,\dots, \alpha_{i-j}; t-j)\right),
\end{array}
\]
where $g_k:=g\left(\mathcal P_k(\alpha_1,\dots,\alpha_j)\right)$,
$1\le k\le i$.

This proves identity \eqref{equ6.17}.
\end{proof}
\subsection{Augmentation Homomorphism}
Recall that the augmentation homomorphism $\varepsilon_{\mathbb C}:\mathbb C\langle\mathbf X\rangle \rightarrow\mathbb C$ is defined on the generators by the formula $\varepsilon_\mathbb C(X_k)=1$, $k\in\mathbb Z_+$, $X_0:=I$. In this section we compute the values of $\varepsilon_\mathbb C$ on the generalized Devlin polynomials $\widetilde{\mathcal P}_i(X_1,\dots, X_i;t)$, $i\in\N$.

\begin{Proposition}\label{prop6.10} For all $i\in\N$, $t, x\in\mathbb C$,
\[
\begin{array}{r}
\displaystyle
\varepsilon_\mathbb C\bigl(\widetilde{\mathcal P}_i(xX_1,\dots, xX_i;t)\bigr):=\sum_{i_1+\cdots +i_k=i}(t-i_1+1)(t-i_1-i_2+1)\cdots (t-i+1)x^k\ \medskip\\
\displaystyle  =(xt+1)(x(t-1)+1)\cdots (x(t-i+2)+1) x(t-i+1).
\end{array}
\]
In particular,
\[
\varepsilon_\mathbb C\bigl(\mathcal P_i(X_1,\dots, X_i)\bigr):=\sum_{i_1+\cdots +i_k=i}(i-i_1+1)(i-i_1-i_2+1)\cdots 1=\frac{(i+1)!}{2}.
\]
\end{Proposition}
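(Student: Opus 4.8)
The plan is to collapse everything to a single one-variable recurrence by applying $\varepsilon_\mathbb C$ to relation \eqref{e6.4} of Theorem \ref{te6.2}. First I would dispose of the opening equality in the statement, which is immediate: since $\varepsilon_\mathbb C$ is an algebra homomorphism with $\varepsilon_\mathbb C(X_k)=1$ for every $k$, a word $xX_{i_k}\cdots xX_{i_1}$ of length $k$ maps to $x^k$, so
\[
\varepsilon_\mathbb C\bigl(\widetilde{\mathcal P}_i(xX_1,\dots,xX_i;t)\bigr)=\sum_{i_1+\cdots+i_k=i}p_{i_1,\dots,i_k}(t)\,x^k=:F_i(x,t),
\]
with the convention $F_0(x,t)=1$. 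The entire content then lies in establishing the product formula for $F_i$.

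Next I would apply $\varepsilon_\mathbb C$ to \eqref{e6.4}. Under the substitution $X_k\mapsto xX_k$ each leading factor $X_j$ on the right-hand side contributes $\varepsilon_\mathbb C(xX_j)=x$, while each lower-order term becomes $F_{n-j}(x,t)$; after reindexing $\sum_{j=1}^n F_{n-j}=\sum_{m=0}^{n-1}F_m$ the relation collapses to the scalar recurrence
\[
F_n(x,t)=x(t-n+1)\sum_{m=0}^{n-1}F_m(x,t),\qquad n\in\N.
\]
The use of \eqref{e6.4} rather than \eqref{e6.3} is what makes this work: \eqref{e6.4} leaves the parameter $t$ unshifted inside the lower terms, so the sum telescopes cleanly, whereas \eqref{e6.3} would introduce a shift $t\mapsto t-j$.

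To solve the recurrence I would set $S_n(x,t):=\sum_{m=0}^{n-1}F_m(x,t)$, so that $F_n=x(t-n+1)\,S_n$ and $S_1=F_0=1$. The recurrence then yields
\[
S_{n+1}=S_n+F_n=\bigl(1+x(t-n+1)\bigr)\,S_n,
\]
whence, by an immediate induction, $S_n=\prod_{j=0}^{n-2}\bigl(x(t-j)+1\bigr)$ (empty product for $n=1$). Substituting back gives
\[
F_n(x,t)=x(t-n+1)\prod_{j=0}^{n-2}\bigl(x(t-j)+1\bigr),
\]
which is exactly the asserted right-hand side $(xt+1)(x(t-1)+1)\cdots(x(t-i+2)+1)\,x(t-i+1)$.

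Finally, the particular case follows by specialising $x=1$, $t=i$, using $\mathcal P_i=\widetilde{\mathcal P}_i(\cdot;i)$: the trailing factor becomes $x(t-i+1)=1$, and the product reduces to $\prod_{j=0}^{i-2}(i-j+1)=(i+1)\,i\,(i-1)\cdots 3=\frac{(i+1)!}{2}$. I expect no genuine obstacle in this argument; the only point demanding care is the bookkeeping around the base case, namely keeping the separate value $F_0=1$ (which the closed form does \emph{not} reproduce when formally evaluated at $n=0$) distinct from the general formula, and matching the empty-product conventions at $n=1$.
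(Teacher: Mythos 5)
Your proof is correct, and it takes a genuinely different route from the paper's. The paper argues operatorially: it expands the resolvent $\bigl(I-\sum_{j\ge 1}xDL^{j-1}t^j\bigr)^{-1}$ on $\mathbb C[z]$ in two ways, once as a geometric series indexed by compositions and once through the identity $(I-tL)(I-tL-xtD)^{-1}=I+\sum_{i\ge 1}xD(L+xD)^{i-1}t^i$, which yields the operator identity \eqref{equ6.20}; evaluating both sides on $z^m$, $m\ge i$, gives the product formula for integer $t=m\ge i$, and polynomiality in $t$ then extends it to all of $\mathbb C$. You instead push the noncommutative recurrence \eqref{e6.4} of Theorem \ref{te6.2} through the character $X_k\mapsto x$, collapse it to the scalar recurrence $F_n=x(t-n+1)\sum_{m=0}^{n-1}F_m$, and solve it by telescoping. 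Your route is shorter and purely formal: it needs no operators and no extension-from-integer-values step, since \eqref{e6.4} already holds for all $t\in\mathbb C$; and your choice of \eqref{e6.4} over \eqref{e6.3} is indeed the crucial point, as the unshifted parameter is what makes the sum telescope. What the paper's computation buys is the stronger intermediate identity \eqref{equ6.20} --- an identity of operators on $\mathbb C[z]$, of which the proposition is the scalar shadow obtained by evaluating on monomials --- and logical independence from Theorem \ref{te6.2}: your argument relies on that theorem, whose own proof runs through the same operator machinery, so nothing is circular in the paper's ordering, but the dependence is real. Your bookkeeping at the base case (noting that $F_0=1$ is not recovered by the closed form at $n=0$) and the specialization $x=1$, $t=i$ giving $(i+1)!/2$ are both handled correctly.
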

\begin{proof}
We give an operatorial proof of this result.
For an operator $A$ on $\mathbb C[z]$ we assume that $A^0:=I$. We have
\begin{equation}\label{equ6.18}
\begin{array}{lr}
\displaystyle
\left(I-\sum_{j=1}^\infty x DL^{j-1} t^j\right)^{-1}=\sum_{i=0}^\infty\left(\sum_{j=1}^\infty x DL^{j-1} t^j\right)^i\medskip\\
\displaystyle \qquad\qquad\qquad\qquad \qquad\ \ =I+\sum_{i=1}^\infty\left(\sum_{i_1+\cdots +i_k=i}x^k DL^{i_1-1}\cdots DL^{i_k-1} \right) t^i.
\end{array}
\end{equation}

On the other hand,
\begin{equation}\label{equ6.19}
\begin{array}{r}
\displaystyle \left(I-\sum_{j=1}^\infty x DL^{j-1} t^j\right)^{-1}=
\bigl(I-xtD (I-tL)^{-1}\bigr)^{-1}=(I-tL)(I-tL-xtD)^{-1}\medskip\\
\displaystyle =(I-tL)\sum_{i=0}^\infty (L+xD)^i\, t^i=I+\sum_{i=1}^\infty \bigl((L+xD)^i- L(L+xD)^{i-1}\bigr)t^i\medskip\\
\displaystyle =I+\sum_{i=1}^\infty xD (L+xD)^{i-1} t^i.
\end{array}
\end{equation}
Equating the coefficients of $t^i$ on the right-hand sides of \eqref{equ6.18} and \eqref{equ6.19} we get
\begin{equation}\label{equ6.20}
\sum_{i_1+\cdots +i_k=i}x^k DL^{i_1-1}\cdots DL^{i_k-1} =xD (L+xD)^{i-1}.
\end{equation}
Now, applying both sides of \eqref{equ6.20} to $z^m$ with $m\ge i$ we obtain
\[
\sum_{i_1+\cdots +i_k=i}p_{i_1,\dots, i_k}(m)x^k z^{m-i}=(xm+1)(x(m-1)+1)\cdots (x(m-i+2)+1) x(m-i+1)z^{m-i}.
\]

Since in both parts of the required identity are polynomials and the latter shows that this identity is valid for all integers $t\ge i$ and $x\in\mathbb C$, it is valid for all $t\in\mathbb C$ as well. This completes the proof of the proposition.
\end{proof}
\begin{C}\label{cor6.11}
Given $i, k\in\N$, $k\le i$,
\[
\begin{array}{l}
\displaystyle
S_{i,k}(t):= \sum_{i_1+\cdots +i_k=i}(t-i_1+1)(t-i_1-i_2+1)\cdots (t-i+1)\medskip\\
\displaystyle \quad\qquad=(t-i+1)\cdot\sum_{l=0}^{k-1}  s(i-1,l+i-k)\,\binom{l+i-k}{l}\,t^{l},
\end{array}
\]
where $s(\cdot,\cdot)$ are Stirling numbers of the first kind.\footnote{In contrast with our previous notation, in the above sum $i$ and $k$ are fixed numbers.}
\end{C}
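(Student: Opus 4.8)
The plan is to read $S_{i,k}(t)$ directly off the generating function supplied by Proposition \ref{prop6.10} and then extract the relevant coefficient via a Stirling-number expansion of a falling factorial. First I would observe that the first equality in Proposition \ref{prop6.10}, regarded as an identity in the variable $x$, identifies $S_{i,k}(t)$ with the coefficient of $x^k$ in the product on its right-hand side, so that
\[
\sum_{k=1}^{i} S_{i,k}(t)\,x^k = (xt+1)(x(t-1)+1)\cdots(x(t-i+2)+1)\,x(t-i+1).
\]
The trailing factor $x(t-i+1)$ contributes one power of $x$ together with the scalar $(t-i+1)$, hence
\[
S_{i,k}(t) = (t-i+1)\cdot\bigl[\text{coefficient of } x^{k-1} \text{ in } Q(x)\bigr],\qquad Q(x):=\prod_{j=0}^{i-2}\bigl(1+x(t-j)\bigr).
\]
This already accounts for the prefactor $(t-i+1)$ and reduces the claim to computing the coefficient of $x^{k-1}$ in the degree-$(i-1)$ polynomial $Q(x)$.

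Next I would rewrite $Q(x)$ as a falling factorial. Using $1+x(t-j)=x\bigl(\tfrac1x+t-j\bigr)$ gives the identity of rational functions
\[
Q(x)=x^{i-1}\prod_{j=0}^{i-2}\Bigl(\tfrac1x+t-j\Bigr)=x^{i-1}\Bigl(\tfrac1x+t\Bigr)^{\underline{i-1}},
\]
where $w^{\underline{n}}:=w(w-1)\cdots(w-n+1)$. Expanding the falling factorial by the defining identity for the signed Stirling numbers of the first kind, $w^{\underline{i-1}}=\sum_{l=0}^{i-1}s(i-1,l)\,w^{l}$, and then expanding each power $\bigl(\tfrac1x+t\bigr)^{l}$ by the binomial theorem, I obtain
\[
Q(x)=\sum_{l=0}^{i-1}\sum_{p=0}^{l}s(i-1,l)\binom{l}{p}t^{\,l-p}\,x^{\,i-1-p}.
\]

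Finally I would extract the coefficient of $x^{k-1}$: the condition $i-1-p=k-1$ forces $p=i-k$, and reindexing $l=(i-k)+l'$ with $0\le l'\le k-1$, together with the symmetry $\binom{l'+i-k}{i-k}=\binom{l'+i-k}{l'}$, turns this coefficient into $\sum_{l'=0}^{k-1}s(i-1,l'+i-k)\binom{l'+i-k}{l'}t^{l'}$, which is precisely the claimed sum. The only point requiring a word of care — and the step I expect to be the sole mildly delicate one — is the substitution involving $\tfrac1x$: the displayed expansion of $Q(x)$ is an identity of Laurent polynomials in $x$, so one must note that $Q(x)$ is genuinely a polynomial and that extracting the coefficient of a nonnegative power $x^{k-1}$ is therefore legitimate. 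Everything else is routine binomial bookkeeping.
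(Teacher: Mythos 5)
Your proof is correct and follows essentially the same route as the paper: both start from the generating-function identity of Proposition \ref{prop6.10}, rewrite each factor as $x\bigl(\tfrac1x+t-j\bigr)$ to obtain a falling factorial in $\tfrac1x+t$, expand via signed Stirling numbers of the first kind and the binomial theorem, and extract the relevant coefficient of $x$. The only difference is cosmetic bookkeeping (you split off the factor $x(t-i+1)$ and work with the coefficient of $x^{k-1}$ in $Q(x)$, while the paper keeps everything in one display and reads off the coefficient of $x^k$).
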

Note that $S_{i,k}(i-2)=-|s(i-1,i-k)|$, $i\ge 2$;  $S_{i,k}(0)=(1-i)\cdot s(i-1,i-k)$.
\begin{proof} For $x\ne 0$ we have
\[
\begin{array}{l}
\displaystyle \sum_{k=1}^i S_{i,k}(t)\,x^k=(xt+1)(x(t-1)+1)\cdots (x(t-i+2)+1)\, x(t-i+1)\\
\\
\displaystyle
=x^i \left(t+\frac{1}{x}\right)\left(t+\frac 1x-1\right)\cdots \left(t+\frac 1x-(i-2)\right) (t-i+1)\\
\\
\displaystyle =\sum_{l=0}^{i-1}s(i-1,l)\left(t+\frac 1x\right)^l x^{i} (t-i+1)=\sum_{l=0}^{i-1}s(i-1,l)\sum_{m=0}^l \binom{l}{m}t^{l-m} x^{i-m}(t-i+1)\\
\\
\displaystyle =(t-i+1)\cdot \sum_{k=1}^i\left(\sum_{l=0}^{k-1}  s(i-1,l+i-k)\,\binom{l+i-k}{l}\,t^{l}\right) x^k.
\end{array}
\]
\end{proof}
\begin{E}
{\rm Consider a separable equation
\begin{equation}\label{eq6.21}
\frac{dv}{dx}=\sum_{i=1}^\infty v^{i+1},\qquad  x\in I_T:=[0,T].
\end{equation}
For all sufficiently small  initial values $v(0)=r\in\mathbb C$
its solution $v(\cdot;r)$ satisfies \penalty-10000 $\sup_{x\in I_T}|v(x,r)|<1$ and is given by the formula, cf. \eqref{e4} and Corollary \ref{cor6.11} with $t=i$, 
\begin{equation}\label{eq6.22}
\begin{array}{l}
\displaystyle
v(x;r)=r+\sum_{i=1}^{\infty}\left(\sum_{i_{1}+\cdots +i_{k}=i}p_{i_{1},\dots, i_{k}}(i)\cdot \int\cdots\int_{0\leq s_{1}\leq\cdots\leq s_{k}\leq x}ds_{k}\cdots ds_{1}\right)r^{i+1}\medskip\\
\displaystyle \qquad\quad =r+\sum_{i=1}^{\infty}\left(\sum_{k=1}^i\frac{S_{i,k}(i)}{k!}\, x^k \right)r^{i+1},\qquad x\in I_T.
\end{array}
\end{equation}
On the other hand, for such $r$, 
\[
\sum_{i=1}^\infty v(x;r)^{i+1}=\frac{v^{2}(x;r)}{1-v(x,r)},\qquad x\in I_T,
\]
so that \eqref{eq6.21} is equivalent to equation
\begin{equation}\label{eq6.23}
\frac{dv}{dx}=\frac{v^2}{1-v},\qquad  x\in I_T:=[0,T].
\end{equation}
For real initial values $r\ne 0$ it can be solved explicitly producing solutions
\begin{equation}\label{eq6.24}
v(x;r)=\left\{
\begin{array}{lll}
\displaystyle -\frac{1}{W_0\left(e^x\cdot W_0^{-1}\bigl(-\frac 1r\bigr)\right)}&{\rm if}&-\infty< r<0\medskip\\
\displaystyle -\frac{1}{W_{-1}\left(e^x\cdot W_{-1}^{-1}\bigl(-\frac 1r\bigr)\right)}&{\rm if}&\displaystyle 0<r\le -\frac{1}{W_{-1}(-e^{-T-1})}
\end{array}
\right.
\end{equation}
such that
\[
\lim_{r\rightarrow 0^{-}}\sup_{x\in I_T}|v(x,r)|= \lim_{r\rightarrow 0^{+}}\sup_{x\in I_T}|v(x,r)|=0.
\]
Here $W_0^{-1}(s)=W_{-1}^{-1}(s)=s e^s$, $s\in\mathbb R$, and $W_0$, $W_{-1}$ are real branches of the Lambert function, see, e.g., \cite{CGHJK} for their properties.

Due to the uniqueness of solutions of initial value problems of \eqref{eq6.21} and \eqref{eq6.23}, for all sufficiently small real $r\ne 0$, functions in \eqref{eq6.22} and \eqref{eq6.24} coincide. Thus \eqref{eq6.24} determines a real analytic function on $I_T\times \left(-\infty, -\frac{1}{W_{-1}(-e^{-T-1})}\right)$ satisfying \eqref{eq6.23} with the Taylor series expansion about $(0,0)$ given by \eqref{eq6.22}.

Finally, note that from Theorem \ref{gpshuffle} we obtain for all $i\in\N$, $t\in\mathbb C$,
\begin{equation}\label{eq6.25}
S_{i}(x,t)=\mathcal B_i(S_1(x,1),\dots, S_i(x,i), t-i+1),
\end{equation}
where
\[
S_i(x,t):=\sum_{k=1}^i\frac{S_{i,k}(t)}{k!}\, x^k.
\]
}
\end{E}

\end{document}